\documentclass[12pt]{amsart}

\usepackage{amsfonts, amsmath, amscd}
\usepackage[psamsfonts]{amssymb}

\usepackage{amssymb}

\usepackage{enumerate}

\usepackage{pb-diagram}
\usepackage[usenames]{color}

\usepackage{amssymb}
\usepackage{amsfonts}
\usepackage{graphicx}
\usepackage{amscd}

\makeatletter
\@namedef{subjclassname@2010}{%
  \textup{2010} Mathematics Subject Classification}
\makeatother

\numberwithin{equation}{section}\numberwithin{equation}{section}

\newtheorem{theorem}{Theorem}[section]
\newtheorem{corollary}[theorem]{Corollary}

\newtheorem{proposition}[theorem]{Proposition}

\newtheorem{remark}[theorem]{Remark}
\newtheorem{lemma}[theorem]{Lemma}
\newtheorem{example}[theorem]{Example}

\newtheorem{conjecture}[theorem]{Conjecture}


\frenchspacing

\textwidth=13.5cm
\textheight=23cm
\parindent=16pt
\oddsidemargin=-0.5cm
\evensidemargin=-0.5cm
\topmargin=-0.5cm


\newcommand{\w}{\omega}

\newcommand{\R}{{\mathbb R}}

\newcommand{\N}{{\mathbb N}}
\newcommand{\Q}{{\mathbb Q}}

\newcommand{\e}{{\epsilon}}

\newcommand{\abs}[1]{\lvert#1\rvert}

\def\sym#1{{[\hspace{-.28em}(#1)\hspace{-.28em}]}}


\begin{document}

\title[On topological groups admitting a $\omega^{\omega}$-base]{On topological groups admitting a base at identity indexed with $\omega^{\omega}$}
\author[Arkady G. Leiderman, Vladimir G. Pestov, and Artur H. Tomita]{Arkady G. Leiderman, Vladimir G. Pestov, and Artur H. Tomita}
\address{A.G.L.: Department of Mathematics, Ben-Gurion University of the Negev, Beer Sheva, P.O.B. 653, Israel}
\email{arkady@math.bgu.ac.il}
\address{V.G.P.: Department of Mathematics and Statistics, University of Ottawa, 585 King Edward Avenue, Ottawa, Ontario K1N 6N5, Canada;}
\address{Departamento de Matem\'atica, Universidade Federal de Santa Catarina, Trindade, Florian\'opolis, SC, 88.040-900, Brazil}
\email{vpest283@uottawa.ca}
\address{A.H.T: Instituto de Matem\'atica e Estatistica, Universidade de S\~ao Paulo, Rua do Matao, 1010, CEP 05508-090, S\~ao Paulo, Brazil}
\email{tomita@ime.usp.br}
\keywords{Topological group, uniform space, free topological group, Tukey order}
\subjclass[2010]{Primary 22A05, 54H11; Secondary 06A06}

\date{\today}

\begin{abstract}
        A topological group $G$ is said to have a local $\w^\w$-base if the neighbourhood system at identity admits a monotone cofinal map from the directed set $\w^\w$. In particular, every metrizable group is such, but the class of groups with a local $\w^\w$-base is significantly wider.  The aim of this article is to better understand the boundaries of this class, by presenting new examples and counter-examples. Ultraproducts and non-arichimedean ordered fields lead to natural families of non-metrizable groups with a local $\w^\w$-base which nevertheless are Baire topological spaces.
        
 More examples come from such constructions as the free topological group $F(X)$ and the free Abelian topological group $A(X)$ of a Tychonoff (more generally uniform) space $X$, as well as the free product of topological groups.
We show that 1) the free product of countably many separable topological groups with a local $\w^\w$-base admits a local $\w^\w$-base;
2) the group $A(X)$ of a Tychonoff space $X$ admits a local $\w^\w$-base if and only if the finest uniformity of $X$
 has an $\w^\w$-base;
3) the group $F(X)$ of a Tychonoff space $X$ admits a local $\w^\w$-base provided $X$ is separable and the finest uniformity of $X$ has
 an $\w^\w$-base.
          
\end{abstract}

\maketitle
\section{Introduction}\label{Intro}
It is a basic fact that a uniform space $(X,{\mathcal U})$ is metrizable if and only if the uniform structure, $\mathcal U$, admits a countable basis. Equivalently, there is a map $V\colon\omega\to {\mathcal U}$ that is both monotone ($n\leq m\Rightarrow V_n\supseteq V_m$) and cofinal (if $V\in {\mathcal U}$, then for some $n$ one has $V_n\subseteq V$). Natural compatible uniform structures exist, for instance, in topological groups. As an immediate consequence, a topological group is metrizable if and only if it admits a countable basis of neighbourhoods of the identity, or, the same, a basis indexed with $\w$. 

It is therefore reasonable to ask which uniform structures admit bases indexed with other common directed sets, which are perhaps not quite as simple as $\w$ and yet yield some extra information. One of the best known and important such sets is the family $\w^\w$ of all sequences of natural numbers equipped with the pointwise partial order.

Uniform spaces admitting a base indexed with $\w^\w$ were considered first by Cascales and Orihuela \cite{CO}. They have proved that compact subspaces with this property are metrizable (recall that a compact space supports a unique compatible uniformity). As an immediate consequence, if a topological group $G$ admits a base at identity indexed with $\w^\w$, then every compact subset of $G$ is metrizable. The main interest of the paper \cite{CO} was in using this observation as a tool of study of locally convex spaces with certain more general properties.
Locally convex spaces with a local base indexed with $\w^\w$ (which was called a $\mathfrak G$-base) were specifically studied in \cite{FKLS}, as well as in the recent monograph \cite{kak}.

A systematic study of topological groups admitting a base at identity indexed with $\w^\w$ was initiated in the papers \cite{GabKakLei_1}, \cite{GabKakLei_2}, see also  \cite{CFHT}, \cite{GabKak_1}, \cite{GabKak_2}.
We adopt a natural terminology  and say that a topological group $G$ admits a {\em local $\w^\w$-base} if there exists a monotone cofinal map from $\w^\w$ to the neighbourhood basis of $G$ at identity. It is of course the same as to say that there exists a neighbourhood basis at identity indexed with the directed set $\w^\w$.
The powerful machinery of duality is available in the class of locally convex spaces, which is lacking in the case of general topological groups. For this reason, the progress for topological groups has been slower.

It is easy to see that the class of topological groups with a local $\w^\w$-base is closed under passing to subgroups, quotient groups, the two-sided completions. A countable product of topological groups with a local $\w^\w$-base equipped with the box product topology has a local $\w^\w$-base. A somewhat less immediate observation is that this class of topological groups is also closed under countable products with the usual product topology, see \cite{GabKakLei_2}, Proposition 2.7. 

We start our Section 2 by observing that the every regular cardinal Tukey-dominated by $\w^\w$ serves as the character (smallest cardinality of a base at identity) of a topological group with a local $\w^\w$-base. This answers Question 2.6 and Question 2.15 of \cite{GabKakLei_2}. We exploit the following observation: every topological group which has a linearly ordered base at identity of cofinality type dominated by $\w^\w$ has a local $\w^\w$-base. Examples of such groups include linearly ordered groups with the order topology, full linear groups over ordered fields, and Bankston ultraproducts of countable families of metrizable groups.
Such groups possess rather unusual combinations of properties. In particular, some of them provide counter-examples to the following question (Question 4.2 of \cite{GabKakLei_2}, Question 9 of \cite{GabKakLei_1}, Problem 2.7 of \cite{GabKak_1}): is it true that a Baire topological group with a local $\w^\w$-base
 (A topological space $X$ is called Baire if the union of every countable collection of closed sets with empty interior in $X$ has empty interior) is necessarily metrizable? In a particular case of topological vector spaces the answer is affirmative \cite{GabKakLei_2}.

In Section 3 we study the following question. Suppose the group topology on a group $G$ is the finest topology making a given family of filters converge to the identity. When does this topology have a local $\w^\w$-base? It is the case provided the group is countable, the family of filters is countable, and each one of them has an $\w^\w$-base. If the group is SIN (the left and right uniformities coincide), then the countability assumption is unnecessary. One  application is to the free products of topological groups as defined by Graev \cite{Gr}: the free product of countably many separable topological groups with a local $\w^\w$-base admits a local $\w^\w$-base (Corollary 3.7). This is new already for the free product $G\ast H$ of two Polish groups. Another application is to the free topological group $F(X)$ and the
free Abelian topological group $A(X)$ of a Tychonoff space $X$ (see \cite{Gra}, \cite{Mar}, \cite{Rai}).  
We observe that $A(X)$ admits a local $\w^\w$-base if and only if the finest uniformity of $X$ has an $\w^\w$-base (Corollary 3.20). The group $F(X)$
 admits a local $\w^\w$-base provided $X$ is separable and the finest uniformity of $X$ has an $\w^\w$-base (Corollary 3.12).   
The above results again lead to new examples of groups with a local $\w^\w$-base, and we manage to partially answer Question 4.15 from \cite{GabKakLei_2}. Some particular cases of those results were obtained previously in \cite{GabKakLei_2} and \cite{GabKak_2}. 

We conclude the article by showing that the free locally convex space $L(X)$ need not have a local $\w^\w$-base even if the free Abelian topological group $A(X)$ does. In fact this happens already for a discrete space $X$ of cardinality continuum, thus we answer negatively Question 4.19 from \cite{GabKakLei_2}.

\begin{remark}
{\em
Gabor Lukacs informed us that he was also interested in the question when the group $A(X)$ admits a local $\w^\w$-base.
He announced some results obtained jointly with Boaz Tsaban (without proofs) in Abstracts of 
IVth Workshop on Coverings, Selections, and Games in Topology, Caserta, Italy (2012). Their work remained unpublished.

Some results of our paper very recently were extended and deepened in several joint preprints by Taras Banakh and Arkady Leiderman;  and in a joint preprint by Saak Gabriyelyan and Jerzy~K{\c{a}}kol.
}
\end{remark}

\section{Order and bases at identity}\label{Ultraproducts}

\subsection{Linear groups over ordered fields}
Topological groups having a linearly ordered basis of neighbourhoods at the identity of a given cofinality type $\tau$ can be obtained in a number of different ways. We start with one such construction, that of linear groups over ordered fields, in order to have an initial supply of examples.

For every regular cardinal $\tau$, there exists a linearly ordered field having $\tau$ as the cofinality type, this is well known and rediscovered a number of times \cite{HM, N}. We will recall just the simplest of possible such constructions.

\begin{example}
{\em
Given a field $k$, denote $k(\alpha)$ a simple transcendental extension of $k$ with a variable $\alpha$. In other words, $k(\alpha)$ consists of all rational functions $p(\alpha)/q(\alpha)$ where $p,q$ are polynomials in $\alpha$ with coefficients in $k$. 
If now $k$ is an ordered field, then $k(\alpha)$ becomes an ordered field with $\alpha$ as a positive infinitesimal, that is, $0<\alpha<x$ for all $x\in k$, $x>0$. The sign of a polynomial $p(\alpha)=a_0+a_1\alpha +\ldots +a_n\alpha^n$ is the sign of the non-zero coefficient of the lowest degree. This extends to the rational functions in an obvious way. Clearly, $k$ is an ordered subfield of $k(\alpha)$.
        
If $\tau$ is a regular cardinal, we construct recursively in $\beta\leq\tau$ an increasing transfinite sequence $k_{\beta}$ of ordered fields, where $k_0=\Q$ (or any other fixed ordered field), $k_{\beta+1}=k_\beta(\alpha_{\beta})$ with $\alpha_{\beta}$ being a positive infinitesimal over $k_{\beta}$, and for limit cardinals $\beta$ we pose $k_{\beta}=\cup_{\gamma<\beta}k_{\gamma}$. It is now easy to see that the ordered field $k_{\tau}$ has the required cofinality $\tau$. 

Viewed as an additive group equipped with the order topology, $k_{\tau}$ becomes an Abelian topological group with a linearly ordered base at zero, having cofinality $\tau$.
}
\end{example}

\begin{example} \label{ex:lin}
{\em
Again, let $k=k_\tau$ from the previous example.
        Let $n$ be a natural number. The general linear group $GL_n(k)$, equipped with the topology induced from $k^n$, is a topological group with a linearly ordered base at zero, having cofinality $\tau$.
}			
\end{example}

Note that if the original ordered field was complete, the linear group over it will be complete as well. Such groups were used to construct some counter-examples in \cite{P81,P82,P98a}.

\subsection{Tukey domination}
A map $f$ between two partially ordered sets $E$ and $D$ is {\em cofinal} if the image of every cofinal subset of $E$ is cofinal in $D$. For monotone maps $f$, this is equivalent to the fact that the image of $E$ is cofinal in $D$ (\cite{DT}, Fact 5). If there is a cofinal map from $E$ to $D$, one says that $D$ is {\em Tukey reducible} to $E$, and writes $D\leq_TE$. Two partially ordered sets $E$ and $D$ are {\em Tukey equivalent,} if they are Tukey reducible to each other. Notation: $E\equiv_TD$. This is an equivalence relation, and on the equivalence classes (called {\em Tukey degrees}) $\leq_T$ is a partial order.

Among all Tukey degrees of partially ordered sets of cardinality $\leq{\mathfrak c}$, there is the top one, namely the set $[{\mathfrak c}]^{<\omega}$ of all finite subsets of the continuum with the natural order (\cite{DT}, Fact 11). To see this, recall another characterization of the Tukey order due to Schmidt \cite{Sch} (see also a proof in \cite{T}, Proposition 1). A map $f\colon D\to E$ is a {\em Tukey map}, or an {\em unbounded map,} if it sends unbounded subsets of $D$ to unbounded subsets of $E$. The existence of an unbounded map from $D$ to $E$ is equivalent to $D\leq_TE$. Now, given a partially ordered set $D$ of cardinality $\mathfrak c$, any injection $D\to [{\mathfrak c}]^{<\omega}$ is clearly unbounded.

The top Tukey order $[{\mathfrak c}]^{<\omega}$ can be used to index the bases of every uniform space of weight $\leq\mathfrak c$. 

The aim of this and subsequent short subsections is to describe 
the possible character of topological groups with
\begin{itemize} 
\item a local base at identity indexed with a {\it directed subset of  $\omega^\omega$} (Corollary \ref{l:dir_subset});  
\item a local base at identity indexed with the directed set $\omega^\omega$ (Corollary \ref{characters}).  
\end{itemize}

\begin{proposition}
        Let $(X,{\mathcal U})$ be a uniform space of weight $\leq\mathfrak c$. Then there exists a monotone cofinal map $[{\mathfrak c}]^{<\omega}\to {\mathcal U}$. In particular, if $G$ is a topological group of character $\leq\mathfrak c$, there exists a monotone cofinal map from $[{\mathfrak c}]^{<\omega}$ to the neighbourhood basis of $G$ at identity.
				\label{p:uni}
\end{proposition}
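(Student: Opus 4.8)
The plan is to realise the required map by sending a finite set of indices to the intersection of the corresponding basic entourages. Fix a base ${\mathcal B}=\{V_\alpha:\alpha<{\mathfrak c}\}$ for the uniformity ${\mathcal U}$ of cardinality $\leq{\mathfrak c}$; if the weight is strictly smaller than ${\mathfrak c}$ I simply allow repetitions in the enumeration, so that the index set is all of ${\mathfrak c}$. Then define
\[
f\colon [{\mathfrak c}]^{<\omega}\to {\mathcal U},\qquad f(F)=\bigcap_{\alpha\in F}V_\alpha,
\]
with the convention $f(\emptyset)=X\times X$. Each value lies in ${\mathcal U}$ because ${\mathcal U}$ is a filter and hence closed under finite intersections — this is the one structural fact about uniformities that the argument uses.

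Recall that ${\mathcal U}$ is directed by reverse inclusion (a smaller entourage being the larger element), so that cofinality of a map into ${\mathcal U}$ means exactly that its image refines every entourage. First I would check monotonicity: if $F\subseteq F'$ then $f(F')=\bigcap_{\alpha\in F'}V_\alpha\subseteq\bigcap_{\alpha\in F}V_\alpha=f(F)$, i.e. $f(F)\leq f(F')$ in the reverse-inclusion order, as required.

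Since $f$ is monotone, by Fact 5 of \cite{DT} its cofinality is equivalent to the image $f([{\mathfrak c}]^{<\omega})$ being cofinal in ${\mathcal U}$, and this is immediate — indeed it is witnessed already by singletons: given any $U\in{\mathcal U}$, the base property furnishes some $\alpha<{\mathfrak c}$ with $V_\alpha\subseteq U$, whence $f(\{\alpha\})=V_\alpha\subseteq U$. Thus every entourage is refined by a value of $f$, so $f$ is the desired monotone cofinal map. The ``in particular'' clause is the special case where ${\mathcal U}$ is the left uniformity of $G$: the filter of neighbourhoods of the identity is directed by reverse inclusion, closed under finite intersections, and admits a base $\{W_\alpha:\alpha<{\mathfrak c}\}$ of cardinality equal to the character of $G$, so the same formula $f(F)=\bigcap_{\alpha\in F}W_\alpha$ works verbatim.

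I do not expect a genuine obstacle here; the whole content lies in choosing the right order on the target and in exploiting that finite intersections of entourages are again entourages. The only points deserving a moment's care are the direction of the order on ${\mathcal U}$ (so that ``monotone cofinal'' carries its intended meaning) and the observation that, while it is the finite-intersection structure of $[{\mathfrak c}]^{<\omega}$ that renders $f$ monotone on the entire poset, cofinality itself costs nothing beyond the defining property of a base.
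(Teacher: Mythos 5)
Your proof is correct and follows essentially the same route as the paper: the paper also fixes a surjection $V\colon{\mathfrak c}\to$ (a base of entourages) and extends it over $[{\mathfrak c}]^{<\omega}$ by $V(\{\kappa_1,\ldots,\kappa_n\})=\bigcap_{i=1}^n V(\kappa_i)$, i.e.\ exactly your finite-intersection map. Your additional verifications (the reverse-inclusion order, monotonicity, and cofinality via singletons) are details the paper leaves implicit.
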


\begin{proof}
Fix a surjective map $V$ from $\mathfrak c$ onto a base of entourages of $X$, and extend it over $[{\mathfrak c}]^{<\omega}$ as a map of upper semi-lattices:
        $V(\{\kappa_1,\ldots,\kappa_n\})=\bigcap_{i=1}^n V(\kappa_i)$.
\end{proof}



\begin{proposition}
        If $G$ is a dense subgroup of the product of uncountably many non-trivial topological groups,
				then $G$ does not admit a local $\w^\w$-base.
\end{proposition}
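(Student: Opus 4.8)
The plan is to argue by contradiction, exploiting the tension between two facts: in a product every basic neighbourhood of the identity restricts only \emph{finitely many} coordinates, whereas a local $\w^\w$-base would let us trap a \emph{single} neighbourhood inside \emph{infinitely many} mutually independent coordinate restrictions at once. Concretely, suppose $G$ is dense in $P=\prod_{i\in I}G_i$ with $I$ uncountable and each $G_i$ a non-trivial (Hausdorff) group, and assume for contradiction that $\{U_f : f\in\w^\w\}$ is a monotone (so $f\le g$ implies $U_f\supseteq U_g$) cofinal family of neighbourhoods of the identity $e$. First I would fix $\omega_1$ distinct indices $\{i_\xi:\xi<\omega_1\}\subseteq I$. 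For each $\xi$, using non-triviality and Hausdorffness of $G_{i_\xi}$, I choose a point $b_\xi\ne e$ together with disjoint open sets $V_\xi\ni e$ and $O_\xi\ni b_\xi$ in $G_{i_\xi}$, and set $N_\xi=\{x\in G:\pi_{i_\xi}(x)\in V_\xi\}$, a neighbourhood of $e$ in $G$. By cofinality there is $g_\xi\in\w^\w$ with $U_{g_\xi}\subseteq N_\xi$.

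The key step, and the place where the uncountability of $I$ is really used, is the following purely combinatorial claim about the pointwise order on $\w^\w$: any family $\{g_\xi:\xi<\omega_1\}$ contains infinitely many members, indexed by \emph{distinct} $\xi$, that share a common pointwise upper bound $g^*\in\w^\w$. I would prove this by a coordinatewise refinement. Starting from the uncountable index set $\omega_1$, at stage $n$ I split the current uncountable set according to the value $g_\xi(n)\in\omega$; since $\omega_1$ is regular and the split is into countably many pieces, one piece remains uncountable. This yields decreasing uncountable sets $B_0\supseteq B_1\supseteq\cdots$ and values $m_n$ with $g_\xi(n)=m_n$ for $\xi\in B_n$. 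Choosing distinct $\eta_n\in B_n$, each $g_{\eta_n}$ agrees with $(m_k)$ on all coordinates $k\le n$, so for every fixed $k$ the numbers $g_{\eta_n}(k)$ take only finitely many values as $n$ ranges over $\omega$; hence $g^*(k):=\max(\{m_k\}\cup\{g_{\eta_n}(k):n<k\})$ is a pointwise bound, i.e.\ $g_{\eta_n}\le g^*$ for all $n$. (This is essentially the assertion that $[\omega_1]^{<\omega}$ is not Tukey dominated by $\w^\w$, which is exactly why the naive cardinality heuristic is not enough.)

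Finally I would extract the contradiction. By monotonicity, $g_{\eta_n}\le g^*$ forces $U_{g^*}\subseteq U_{g_{\eta_n}}\subseteq N_{\eta_n}$ for every $n$, so $U_{g^*}\subseteq\bigcap_n N_{\eta_n}$. But $U_{g^*}$ is a neighbourhood of $e$ in $G$, hence contains a basic one $B=\{x\in G:\pi_i(x)\in W_i,\ i\in F\}$ with $F\subseteq I$ finite. As the coordinates $i_{\eta_n}$ are pairwise distinct, I may pick $n_0$ with $i_{\eta_{n_0}}\notin F$. Then $\{y\in P:\pi_i(y)\in W_i\ (i\in F),\ \pi_{i_{\eta_{n_0}}}(y)\in O_{\eta_{n_0}}\}$ is a non-empty open subset of $P$, so by density of $G$ it contains some $x\in G$; this $x$ lies in $B$ yet satisfies $\pi_{i_{\eta_{n_0}}}(x)\in O_{\eta_{n_0}}$, which is disjoint from $V_{\eta_{n_0}}$, whence $x\notin N_{\eta_{n_0}}$. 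This contradicts $B\subseteq U_{g^*}\subseteq N_{\eta_{n_0}}$, completing the argument. The main obstacle is isolating and proving the combinatorial claim of the second paragraph; once it is available, the independence supplied by density together with the finite support of basic neighbourhoods finishes things routinely.
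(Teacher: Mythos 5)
Your proof is correct, and it is worth comparing to the paper's, which disposes of the proposition in one line by citing Isbell's theorem that $[\omega_1]^{<\omega}$ is not Tukey reducible to $\omega^\omega$, leaving the reduction from the group statement to that combinatorial fact entirely implicit. Your argument is the same idea made fully self-contained. Your ``key combinatorial claim'' --- every $\omega_1$-indexed family in $\omega^\omega$ has an infinite subfamily, on distinct indices, with a common pointwise upper bound --- is exactly the instance of Isbell's theorem that is needed (the general statement $[\omega_1]^{<\omega}\le_T D$ holds if and only if $D$ has an uncountable subset all of whose infinite subsets are unbounded, so your claim is the negation for $D=\omega^\omega$), and your pigeonhole proof of it is sound: regularity of $\omega_1$ keeps each $B_n$ uncountable, the diagonal choices $\eta_n\in B_n$ agree with $(m_k)$ on coordinates $k\le n$, and hence $g^*$ as you define it bounds all $g_{\eta_n}$ (it does not matter that $\bigcap_n B_n$ may be empty, since you only need one point from each $B_n$). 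The group-theoretic reduction --- density of $G$ together with the finite support of basic product neighbourhoods shows that no neighbourhood of $e$ in $G$ can be contained in $N_{\eta_n}$ for infinitely many distinct coordinates simultaneously --- is also carried out correctly, and it is precisely the step the paper omits. What the paper's citation buys is brevity and the full strength of Isbell's result within the Tukey framework the paper uses throughout; what your route buys is an elementary, reference-free proof that displays exactly where uncountability, non-triviality, and density each enter. One small point of convention: your choice of disjoint $V_\xi\ni e$ and $O_\xi\ni b_\xi$ uses Hausdorffness of the factors, which is the standing assumption for topological groups in this paper (and is genuinely needed: for non-trivial groups with the indiscrete topology the statement fails, since $\{e\}$ is then a dense subgroup with a trivial base).
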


\begin{proof} It follows from the fact
that $[\w_1]^{<\w}$ is not Tukey reducible to $\w^\w$ (a result of Isbell \cite{I2}). 

\end{proof}

In particular, now it is easy to construct
countable topological groups without a local $\w^\w$-base: just take any countable dense subgroup $G$ of the product of uncountably many discrete groups $\mathbb Z_2$.   
        An example of a different kind can be found in \cite{GabKakLei_2}, Example 6.4. 
        We will revisit countable groups later, cf. Proposition \ref{p:countable} and Example \ref{ss:ex}. 

Let us make the following perhaps known observation. 

\begin{lemma}
The directed set $\w^\w$ contains a directed subset isomorphic to $[{\mathfrak c}]^{<\omega}$.
     \label{l:dir_subset}
\end{lemma}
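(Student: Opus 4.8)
The plan is to realize $[{\mathfrak c}]^{<\w}$ concretely inside $\w^\w$ by means of an almost disjoint family. Recall that there is a family $\{A_x : x\in{\mathfrak c}\}$ of infinite subsets of $\w$ which is \emph{almost disjoint}, meaning that $A_x\cap A_y$ is finite whenever $x\neq y$; such a family of cardinality ${\mathfrak c}$ is classical (for instance, fix a bijection of $\w$ with the nodes of the binary tree $2^{<\w}$, and let $A_x$ be the set of initial segments of a branch $x\in 2^\w$, so that $A_x$ is infinite while two distinct branches share only finitely many initial segments). I will use this combinatorial object to encode finite subsets of the continuum as points of $\w^\w$.

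For a finite set $F\in[{\mathfrak c}]^{<\w}$ put $A_F=\bigcup_{x\in F}A_x$, and let $\phi(F)\in\w^\w$ be the characteristic function of $A_F$, so that $\phi(F)$ takes only the values $0$ and $1$. First I would verify the easy, monotone direction: if $F\subseteq G$ then $A_F\subseteq A_G$, hence $\phi(F)\leq\phi(G)$ in the pointwise order on $\w^\w$. Since $[{\mathfrak c}]^{<\w}$ is directed (any two finite sets lie inside their union) and $\phi$ is order preserving, the image $D=\phi([{\mathfrak c}]^{<\w})$ is automatically a directed subset of $\w^\w$.

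The crux is to show that $\phi$ \emph{reflects} the order, i.e.\ that $\phi(F)\leq\phi(G)$ implies $F\subseteq G$; together with the previous paragraph this makes $\phi$ an order isomorphism of $[{\mathfrak c}]^{<\w}$ onto $D$ (in particular injective, since $\phi(F)=\phi(G)$ forces $F\subseteq G$ and $G\subseteq F$). Here almost disjointness does the essential work. As $\phi(F)$ and $\phi(G)$ are characteristic functions, $\phi(F)\leq\phi(G)$ is equivalent to $A_F\subseteq A_G$. Suppose toward a contradiction that some $x\in F$ were not in $G$. Then $A_x\subseteq A_G=\bigcup_{y\in G}A_y$, yet each intersection $A_x\cap A_y$ with $y\in G$ is finite because $x\neq y$, and $G$ is finite; hence $A_x=A_x\cap A_G$ would be finite, contradicting that every member of the family is infinite. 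Thus $F\subseteq G$.

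Combining the two directions, $\phi$ is an order isomorphism of $[{\mathfrak c}]^{<\w}$ onto the directed set $D\subseteq\w^\w$, which is exactly what the lemma asserts. I expect the order-reflecting step to be the main obstacle, as it is the only place where a nontrivial combinatorial property is needed. It is worth emphasizing that this embedding is fully compatible with Isbell's theorem quoted above: although $D$ is order-isomorphic to the top Tukey degree $[{\mathfrak c}]^{<\w}$, its members take only the values $0$ and $1$, so $D$ is very far from cofinal in $\w^\w$ and no Tukey reduction $[{\mathfrak c}]^{<\w}\leq_T\w^\w$ is produced.
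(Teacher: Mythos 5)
Your proof is correct and takes essentially the same route as the paper's: an almost disjoint family $\{A_x : x<{\mathfrak c}\}$ of infinite subsets of $\w$, with a finite set $F$ sent to the characteristic function of $\bigcup_{x\in F}A_x$, which is exactly the paper's pointwise join $f_{\kappa_1}\vee\cdots\vee f_{\kappa_n}$ of characteristic functions. You merely spell out the order-reflecting step (via almost disjointness and infiniteness of each $A_x$) that the paper leaves implicit, and your closing observation that this order embedding yields no Tukey reduction, consistent with Isbell's theorem, is also correct.
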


\begin{proof}
Let $A_{\kappa},\kappa<{\mathfrak c}$ be a family of almost disjoint subsets of $\omega$. 
        For each $\kappa$, denote the characteristic function of $A_\kappa$ by $f_\kappa$. 
        The correspondence
        \[\kappa\leftrightarrow f_{\kappa}\]
        extends to an isomorphic embedding of directed sets via
        \[\{\kappa_1,\ldots,\kappa_n\}\leftrightarrow f_{\kappa_1}\vee f_{\kappa_2}\vee\ldots \vee f_{\kappa_n}.\]
\end{proof}

As an immediate consequence of Proposition \ref{p:uni} and Lemma \ref{l:dir_subset} we obtain 

\begin{corollary}
        Every topological group of character $\leq\mathfrak c$ admits a basis of neighbourhoods of identity indexed with a directed subset of $\omega^\omega$.
				 \label{c:dir_subset}
\end{corollary}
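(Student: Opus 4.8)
The plan is to chain together the two results just established, transporting the $[{\mathfrak c}]^{<\omega}$-indexed basis supplied by Proposition \ref{p:uni} along the isomorphic embedding provided by Lemma \ref{l:dir_subset}. Concretely, let $G$ be a topological group of character $\leq{\mathfrak c}$ and let $\mathcal N$ denote a neighbourhood basis at the identity. First I would invoke Proposition \ref{p:uni} to obtain a monotone cofinal map $\phi\colon [{\mathfrak c}]^{<\omega}\to\mathcal N$. Next I would invoke Lemma \ref{l:dir_subset}, which furnishes a directed subset $D\subseteq\w^\w$ together with an order isomorphism $\psi\colon [{\mathfrak c}]^{<\omega}\to D$.

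The map I want is the composite $\phi\circ\psi^{-1}\colon D\to\mathcal N$. Since $\psi$ is an isomorphism of directed sets, $\psi^{-1}$ is monotone, and hence $\phi\circ\psi^{-1}$ is monotone as a composition of monotone maps. Because $\psi^{-1}$ is a bijection of $D$ onto $[{\mathfrak c}]^{<\omega}$, the image of $\phi\circ\psi^{-1}$ coincides with the image of $\phi$, which is cofinal in $\mathcal N$. I would then appeal to the criterion recorded earlier in the Tukey-domination subsection (\cite{DT}, Fact 5): for a monotone map, cofinality is equivalent to having cofinal image. This certifies that $\phi\circ\psi^{-1}$ is cofinal, so $D$ indexes a neighbourhood basis of $G$ at the identity, which is exactly the assertion.

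The only point requiring any care — and it is the nearest thing to an obstacle in what is otherwise a formal composition — is the verification that cofinality survives the transport along $\psi^{-1}$. This is why I would be explicit about invoking the monotone-map characterization of cofinality rather than arguing directly from the definition in terms of cofinal subsets, since the latter would force me to track cofinal subsets of $D$ back through $\psi$. With the monotone characterization in hand, the preservation of cofinality reduces to the triviality that a bijection does not alter the image, and the corollary follows at once.
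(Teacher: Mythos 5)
Your proof is correct and follows exactly the route the paper intends: the paper derives Corollary \ref{c:dir_subset} as an ``immediate consequence'' of Proposition \ref{p:uni} and Lemma \ref{l:dir_subset}, and your composite $\phi\circ\psi^{-1}$ with the monotone-map characterization of cofinality (\cite{DT}, Fact 5) is precisely the spelled-out version of that deduction.
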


\subsection{The character of groups with a local $\w^\w$-base}

In the following lemma, we consider a cardinal as the corresponding initial ordinal.

\begin{lemma}
        If a cardinal $\tau$ is Tukey-dominated by a partially ordered set $D$, then there is a monotone cofinal map from $D$ to $\tau$. 
        \label{l:dom}
\end{lemma}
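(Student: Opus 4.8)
The plan is to use the characterization of the Tukey order via unbounded (Tukey) maps recalled just above the statement, rather than to manipulate a cofinal map into $\tau$ directly. Since $\tau$ is Tukey-dominated by $D$, i.e.\ $\tau\leq_T D$, Schmidt's criterion furnishes an unbounded map $\phi\colon\tau\to D$, that is, a map carrying every unbounded subset of $\tau$ to an unbounded subset of $D$. The idea is to reverse $\phi$ into a monotone cofinal map $g\colon D\to\tau$ by recording, for each $d\in D$, how high the indices $\alpha$ with $\phi(\alpha)\le d$ reach in $\tau$. I would avoid starting from a cofinal map $D\to\tau$, because a cofinal map need not send bounded sets to bounded sets, which is exactly the control the construction below requires.

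Concretely, set
\[
  g(d)=\sup\{\alpha<\tau:\phi(\alpha)\le d\},
\]
with the convention that the supremum of the empty set is $0$. The crux is to verify that $g$ is well defined, i.e.\ that $g(d)<\tau$ for every $d$. This is precisely where unboundedness of $\phi$ is used: the down-set $\{d'\in D:d'\le d\}$ is bounded in $D$, namely by $d$ itself, so its $\phi$-preimage $\{\alpha<\tau:\phi(\alpha)\le d\}$ cannot be unbounded in $\tau$ — otherwise $\phi$ would carry an unbounded subset of $\tau$ into the bounded set $\{d'\le d\}$, contradicting that $\phi$ is a Tukey map. Hence this preimage has an upper bound $\beta<\tau$, so every element of it is $\leq\beta$ and therefore its supremum is $\leq\beta<\tau$, which uses that $\tau$ is (treated as) an initial ordinal.

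Monotonicity is then immediate: if $d_1\le d_2$ then $\{\alpha:\phi(\alpha)\le d_1\}\subseteq\{\alpha:\phi(\alpha)\le d_2\}$, whence $g(d_1)\le g(d_2)$. Because $g$ is monotone, by the remark recalled above (for monotone maps, cofinality is equivalent to the image being cofinal) it suffices to check that $g(D)$ is cofinal in $\tau$. For this, fix any $\beta<\tau$ and take $d=\phi(\beta)$; then $\beta$ belongs to $\{\alpha:\phi(\alpha)\le\phi(\beta)\}$, so $g(\phi(\beta))\ge\beta$. As $\beta$ ranges over $\tau$, the values $g(\phi(\beta))$ are arbitrarily large, so $g(D)$ is cofinal and $g$ is the desired monotone cofinal map.

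The only genuine obstacle is the well-definedness step, and it is there that both hypotheses are used essentially: the Tukey-map property of $\phi$ forces the relevant preimages to be bounded, and the fact that $\tau$ is a cardinal, viewed as a well-ordered set, guarantees that the supremum of a bounded set of ordinals stays strictly below $\tau$. Once this is secured, monotonicity and cofinality are routine unwindings of the definitions.
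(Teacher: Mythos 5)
Your proof is correct and takes essentially the same approach as the paper: both start from a Tukey (unbounded) map $\phi\colon\tau\to D$ and invert it by sending each $d\in D$ to the supremum (the paper uses the least strict upper bound, an immaterial difference) of $\{\alpha<\tau:\phi(\alpha)\le d\}$, with the Tukey property guaranteeing this set is bounded in $\tau$. Your write-up merely makes explicit the well-definedness and cofinality checks that the paper compresses into ``Clearly.''
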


\begin{proof} 
        Let $g\colon \tau\to D$ be a Tukey map. For every $x\in D$, define $f(x)$ as the smallest ordinal $\xi<\tau$ greater than each $\eta<\tau$ with $g(\eta)\leq x$. Clearly, $f$ is monotone and cofinal in $\tau$.
\end{proof}

We are interested in the cardinals dominated by $\w^\w$. They include $\omega$, $\mathfrak b$ (the cardinality of the smallest unbounded subset of $\w^\w$),  $cof(\mathfrak d)$ and $\mathfrak d$ (where $\mathfrak d$ is the minimal size of cofinal subset of  $\w^\w$).
For a survey and proofs with references, see \cite{mama}.
Note that implicitly the idea of calculating the actual cofinality 
of the neighborhood system at the identity in topological groups was used also in \cite{CFHT}.
 
The following is immediate with a view of Lemma \ref{l:dom}.

\begin{proposition}
        Let $\tau$ be a cardinal with $\w^\w\geq_T\tau$, and let $G$ be a topological group with a base linearly ordered by inclusion, having cofinality type $\tau$. Then $G$ has a local $\w^\w$-base.
        \label{p:dom}
\end{proposition}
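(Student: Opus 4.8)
The plan is to obtain the required local $\w^\w$-base as a composition of two monotone cofinal maps, one supplied by the linearly ordered base and one supplied by Lemma \ref{l:dom}. First I would record the structure coming from the hypothesis on $G$. Let $\mathcal B$ be the neighbourhood basis of $G$ at the identity, directed by reverse inclusion (so that smaller neighbourhoods are larger in the directed order). By assumption $\mathcal B$ carries a linear order by inclusion whose cofinality type is $\tau$. Since the cofinality of any linearly ordered set is a regular cardinal, $\tau$ is regular, and its cofinality is witnessed by an increasing sequence of order type $\tau$. Concretely, this yields a monotone cofinal map $U\colon\tau\to\mathcal B$, i.e.\ a family $\{U_\xi:\xi<\tau\}$ with $\xi\leq\eta\Rightarrow U_\xi\supseteq U_\eta$ and such that every $W\in\mathcal B$ contains some $U_\xi$.

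Next I would invoke Lemma \ref{l:dom}. The hypothesis $\w^\w\geq_T\tau$ says precisely that the cardinal $\tau$ is Tukey-dominated by the directed set $\w^\w$, so the lemma, applied with $D=\w^\w$, produces a monotone cofinal map $f\colon\w^\w\to\tau$.

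Finally I would form the composite $U\circ f\colon\w^\w\to\mathcal B$ and verify it is again monotone and cofinal. Monotonicity is immediate, a composition of order-preserving maps being order-preserving. For cofinality, given any $W\in\mathcal B$, cofinality of $U$ gives $\xi<\tau$ with $U_\xi\subseteq W$, cofinality of $f$ gives $x\in\w^\w$ with $f(x)\geq\xi$, and monotonicity of $U$ then yields $U(f(x))\subseteq U_\xi\subseteq W$. Hence $U\circ f$ is a monotone cofinal map from $\w^\w$ to $\mathcal B$, which is exactly a local $\w^\w$-base for $G$.

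I do not expect a genuine obstacle: the statement is truly immediate once Lemma \ref{l:dom} is available, and indeed the excerpt advertises it as such. The only points deserving a moment's care are bookkeeping ones, namely matching the direction conventions of the Tukey order so that $\w^\w\geq_T\tau$ feeds correctly into Lemma \ref{l:dom}, and using the regularity of $\tau$ (as the cofinality of a linear order) to realize the cofinality type of $\mathcal B$ as an honest order-preserving $\tau$-indexed cofinal family. The stability of ``monotone cofinal'' under composition is the same principle already recorded via \cite{DT}, Fact 5.
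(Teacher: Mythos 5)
Your proof is correct and is exactly the argument the paper intends: the paper declares the proposition ``immediate with a view of Lemma \ref{l:dom}'', meaning precisely the composition of the monotone cofinal map $\w^\w\to\tau$ supplied by that lemma with the $\tau$-indexed decreasing cofinal family coming from the linearly ordered base. Your added bookkeeping (regularity of $\tau$, stability of monotone cofinal maps under composition) fills in the details the paper leaves implicit, with no deviation in approach.
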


\begin{corollary} 
        Every regular cardinal Tukey below $\w^\w$ is
        serving as the character of a topological group with a local $\w^\w$-base.
				\label{characters}
\end{corollary}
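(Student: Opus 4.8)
The plan is to read the statement off directly from the explicit examples constructed at the start of this section, combined with Proposition \ref{p:dom}. Fix a regular cardinal $\tau$ with $\w^\w\geq_T\tau$. The recursive construction of ordered fields described above produces an ordered field $k_\tau$ whose cofinality type is precisely $\tau$; regarding $k_\tau$ as an additive group with the order topology yields an Abelian topological group $G$ carrying a base of neighbourhoods of zero linearly ordered by inclusion and of cofinality type $\tau$ (one could equally use $GL_n(k_\tau)$ from Example \ref{ex:lin}). This is exactly the situation to which Proposition \ref{p:dom} applies, so $G$ has a local $\w^\w$-base, and it remains only to verify that the character of $G$ equals $\tau$.

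For the character I would pass to a strictly decreasing cofinal $\tau$-sequence $(U_\xi)_{\xi<\tau}$ inside the linear base, so that $U_\alpha\subseteq U_\beta$ iff $\alpha\geq\beta$ and $(U_\xi)_{\xi<\tau}$ is itself a base at zero. Since this base has cardinality $\tau$, the character is at most $\tau$. To see it is at least $\tau$, let $\mathcal B$ be any base at zero and suppose towards a contradiction that $|\mathcal B|<\tau$. For each $B\in\mathcal B$ choose $\eta(B)<\tau$ with $U_{\eta(B)}\subseteq B$. As $\tau$ is regular and $|\mathcal B|<\tau$, the supremum $\eta^\ast=\sup_{B\in\mathcal B}\eta(B)$ is an ordinal $<\tau$. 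Picking $B\in\mathcal B$ with $B\subseteq U_{\eta^\ast+1}$, possible because $\mathcal B$ is a base, gives $U_{\eta(B)}\subseteq U_{\eta^\ast+1}$, hence $\eta(B)\geq\eta^\ast+1>\eta^\ast$, contradicting $\eta(B)\leq\eta^\ast$. Thus $|\mathcal B|\geq\tau$ and the character of $G$ is exactly $\tau$.

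Since the whole argument is an assembly of facts already in hand—Proposition \ref{p:dom} supplying the local $\w^\w$-base and the ordered-field example supplying a group of the required linearly ordered type—I do not expect any genuinely hard step. The only point that is not a direct citation is the identification of the character, and even this reduces to the standard observation that a linearly ordered base of regular cofinality type $\tau$ cannot be cofinally refined by fewer than $\tau$ sets. The regularity of $\tau$ is precisely what is used there, and it is the one hypothesis of the statement that cannot be dropped.
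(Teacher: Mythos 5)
Your proposal is correct and follows exactly the paper's route: the paper's entire proof of Corollary \ref{characters} is ``Follows from Proposition \ref{p:dom} and Example \ref{ex:lin}.'' The only difference is that you spell out the verification that a strictly decreasing linearly ordered base of regular cofinality type $\tau$ forces the character to be exactly $\tau$, a step the paper leaves implicit; your regularity argument for it is sound.
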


\begin{proof}
        Follows from Proposition \ref{p:dom} and Example \ref{ex:lin}.
\end{proof}

For $\tau={\mathfrak b}$, the last Corollary \ref{characters} answers positively Question 2.6 in \cite{GabKakLei_2}.
Moreover, since the pseudocharacter of the group from Example \ref{ex:lin} is uncountable, we get also a solution to Question 2.15 in \cite{GabKakLei_2}.
The authors are grateful to the referee who turned our attention to this observation. 

\subsection{Bankston ultraproducts}
Another source of groups with a local $\w^\w$-base are
the Bankston ultraproducts of topological groups \cite{B}. The Bankston ultraproduct of a family $\{ G_i,~i\in I \}$ of topological groups with regard to an ultrafilter $\mathcal U$ on $I$ is a quotient topological group of the box product $\square_{i\in I}G_i$ by the closed normal subgroup
\[N=\{g\in \square_{i\in I}G_i\colon \{i\in I\colon g_i=e_i\}\in{\mathcal U}\}.\]

\begin{example}
{\em
The ultraproduct of countably many copies of $\R$ is a non-standard model of the reals, ${^\ast\R}$ \cite{robinson}.
 In this case, ${^\ast\R}$ is the algebraic ultrapower of $\R$ modulo $\mathcal U$, that is, the quotient of $\R^\omega$ modulo the ultrafilter:
   \[x\sim y\iff \{i\colon x_i=y_i\}\in {\mathcal U}.\]
  As is well known, ${^\ast\R}$ forms an ordered field of an uncountable cofinal type. If equipped with the additive group structure and the order topology, ${^\ast\R}$ is a topological group, the Bankston ultraproduct of countably many additive topological groups of real numbers.
}
\end{example}

If $I$ is countable and each $G_i$ is metrizable, then clearly the Bankston ultraproduct has a base indexed with $\omega^\omega$, for instance as a quotient group of the box product of metrizable groups.


\begin{lemma}
        If the groups $G_i$ are metrizable, their Bankston ultraproduct over a non-principal ultrafilter on a countable index set even has a base of neighbourhoods at identity linearly ordered by inclusion.
        \label{l:rast}
\end{lemma}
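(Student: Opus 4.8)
The plan is to lift a linearly ordered base directly from the ultrapower order of the index functions. Since the index set $I$ is countable, I would first fix in each metrizable group $G_i$ a decreasing sequence $U_i^0\supseteq U_i^1\supseteq\cdots$ of open neighbourhoods of $e_i$ forming a base at the identity (available by metrizability). For $f\in\w^\w$ set $W_f=\square_{i}U_i^{f(i)}$; because the box product topology is generated at the identity by full products of neighbourhoods, the family $\{W_f:f\in\w^\w\}$ is a base of open neighbourhoods of the identity in $\square_{i}G_i$. Let $q\colon\square_iG_i\to(\square_iG_i)/N$ be the quotient homomorphism onto the Bankston ultraproduct. As $q$ is a continuous open surjection, the images $q(W_f)$ form a base of neighbourhoods of the identity of the ultraproduct.

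The heart of the argument is to compare two such basic sets. Using $q^{-1}(q(W_g))=W_gN$ (valid since $N$ is normal), one has $q(W_f)\subseteq q(W_g)$ if and only if $W_f\subseteq W_gN$. I claim this inclusion holds whenever $\{i:f(i)\geq g(i)\}\in{\mathcal U}$. Indeed, take $h\in W_f$ and put $S=\{i:f(i)\geq g(i)\}\in{\mathcal U}$; monotonicity of the chosen bases gives $h_i\in U_i^{f(i)}\subseteq U_i^{g(i)}$ for $i\in S$, so one can factor $h=wn$ by setting $w_i=h_i,\ n_i=e_i$ for $i\in S$ and $w_i=e_i,\ n_i=h_i$ for $i\notin S$. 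Then $w\in W_g$ and $n\in N$, as desired.

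Linearity then follows from the ultrafilter dichotomy: for any $f,g$ the sets $\{i:f(i)\geq g(i)\}$ and $\{i:g(i)\geq f(i)\}$ together cover $I$, so at least one of them lies in ${\mathcal U}$, and by the previous paragraph the corresponding inclusion $q(W_f)\subseteq q(W_g)$ or $q(W_g)\subseteq q(W_f)$ holds. Hence $\{q(W_f):f\in\w^\w\}$ is a neighbourhood base at the identity that is totally ordered by inclusion, which is exactly the assertion.

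The step I expect to be most delicate is the factorisation $h=wn$: one must exploit that membership in $N$ only constrains coordinates on a set belonging to ${\mathcal U}$, leaving the remaining coordinates completely free, so that the ``bad'' coordinates $i\notin S$ can be absorbed into the $N$-factor. It is worth emphasising that strictness of the inclusions $U_i^{f(i)}\subseteq U_i^{g(i)}$ is never used (the bases may stabilise, e.g.\ when some $G_i$ is discrete); only monotonicity of the bases and the ultrafilter dichotomy enter. This is precisely why the pointwise, merely directed order of $\w^\w$ collapses upon passing to the quotient into the linear order of the ultrapower $\w^\w/{\mathcal U}$.
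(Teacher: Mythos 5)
Your proof is correct, but it takes a different route from the paper's. The paper's argument is metric: it fixes a left-invariant metric $d_n\leq 1$ on each $G_n$, forms the ${}^{\ast}\R$-valued ultraproduct metric $d(x,y)=[d_n(x_n,y_n)]_{\mathcal U}$ on the Bankston ultraproduct, checks that the balls $B_\e(e)$, $\e\in{}^{\ast}\R$, $\e>0$, constitute a neighbourhood base at the identity (via the identification $B_\e(e)=\left[\prod_n B^n_{\e_n}(e)\right]$, which is the paper's counterpart of your factorization step), and then gets linearity for free from the linear order on the radii in ${}^{\ast}\R$. You avoid any metric: you push the box-product basic neighbourhoods $W_f$, $f\in\w^\w$, through the open quotient map and establish the chain condition by the explicit decomposition $h=wn$ through the normal subgroup $N$ together with the ultrafilter dichotomy on $\{i\colon f(i)\geq g(i)\}$ and $\{i\colon g(i)\geq f(i)\}$. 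Both proofs exploit exactly the same phenomenon --- the ultrafilter collapses the pointwise partial order into the linear order of an ultrapower --- but the trade-offs differ: your argument is more elementary and self-contained (no nonstandard reals, no verification of metric axioms and compatibility), and it visibly works for an arbitrary index set and an arbitrary ultrafilter, principal or not; the paper's formulation, on the other hand, sets up machinery that it reuses immediately, since the same ${}^{\ast}\R$-valued metric and its balls are the key tool in the proof of Theorem 2.9 on spherical completeness and the Baire property. One small remark: your closing observation that only monotonicity of the chosen bases is needed is accurate, and your care with the factorization (absorbing the coordinates outside $S$ into the $N$-factor, using that $e_i\in U_i^{g(i)}$ for the remaining coordinates of $w$) is precisely the point where the quotient makes the order collapse.
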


\begin{proof}    
        The easiest way to see this is by showing that the Bankston ultraproduct is ``equipped with a metric'' taking values in ${^\ast\R}$. Fix a compatible left-invariant metric $d_n$ on each group $G_n$, whose values are bounded by one, and define a metric with values in ${^\ast\R}$ by
        \[d(x,y) = [d_n(x_n,y_n)]_{\mathcal U},\]
        the equivalence class of the corresponding element of the product. This definition does not depend on the choice of a representative of a class in $G$ (as long as we employ the same ultrafilter for both ultraproducts of course). The function $d\colon G\times G\to{^\ast\R}$ takes values in the interval $[0,1]$ of the non-standard reals, and satisfies the three axioms of a metric. Besides, this ${^\ast\R}$-valued metric is clearly left-invariant on $G$.
        Let us prove that it determines the topology. It is enough to verify this property at zero. This follows from the fact that for every $\e\in{^\ast\R}$, $\e>0$, if $\e=[(\e_n)]$, then the open ball of radius $\e$ around the identity in $G$ is nothing but
        \[B_\e(e) = \left[\prod B^n_{\e_n}(e)\right].\]
        Thus, the open balls form a basic neighbourhood system in the Bankston ultraproduct, which is obviously ordered by inclusion. In particular, if $(\e_\beta)_{\beta<\tau}$ is a transfinite sequence of strictly positive elements of ${^\ast\R}$, where $\tau$ is strictly less than the cofinality type of ${^\ast\R}$, there exists a positive $\e>0$ which forms a lower bound for all $\e_\beta$, and so every intersection of fewer than ${\mathrm{cf}}\,{^\ast\R}$ balls is a neighbourhood of identity.
\end{proof}

Now let us consider the question whether Bankston ultraproducts are Baire topological spaces. We say that a topological group $G$ is {\em spherically complete} (by analogy with a notion well known in non-archimedean analysis, cf. e.g. \cite{L,P91}), if there exists a base $\mathcal B$ of the neighbourhoods at the identity with the following property: if $\{ V_n\in {\mathcal B}:~~ n \in \omega \}$ is a (non-strictly) decreasing sequence and $\{ g_n\in G:~~n \in \omega \}$, then, provided the sequence
\[ \{ g_nV_n:~~ n \in \omega \} \]
is decreasing by inclusion, the intersection  $\bigcap_{n=1}^\infty V_n$ is non-empty.

\begin{lemma}\label{lemma1}
  Let $G$ be a spherically complete topological group.
Then $G$
  is a Baire topological space.
  \end{lemma}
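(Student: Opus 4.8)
The plan is to verify the Baire property in its dual formulation: rather than showing that a countable union of closed sets with empty interior has empty interior, I will establish the equivalent statement that the intersection of countably many dense open subsets of $G$ is dense. So I fix dense open sets $U_n$, $n\in\omega$, together with an arbitrary nonempty open set $W$, and aim to produce a point of $W\cap\bigcap_n U_n$. Here I read the defining property of spherical completeness in its intended form: for a decreasing sequence $V_n\in\mathcal B$ and elements $g_n\in G$ for which $\{g_nV_n\}$ is decreasing by inclusion, the intersection $\bigcap_n g_nV_n$ is nonempty (the displayed condition $\bigcap_n V_n\neq\emptyset$ being vacuous, since each $V_n$ contains the identity).

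The heart of the argument is a recursive construction of a decreasing sequence $V_n\in\mathcal B$ and points $g_n\in G$ such that the translates $g_nV_n$ are nested, $g_0V_0\subseteq W$, and $g_nV_n\subseteq U_n$ for every $n$. To start, $W\cap U_0$ is nonempty open; I pick a point $g_0$ in it, and, using that $\mathcal B$ is a downward-directed base of neighbourhoods of the identity and that $g_0^{-1}(W\cap U_0)$ is a neighbourhood of $e$, I choose $V_0\in\mathcal B$ with $V_0\subseteq g_0^{-1}(W\cap U_0)$, so that $g_0V_0\subseteq W\cap U_0$. At the inductive step, having $g_nV_n$, I pass to the nonempty open set $g_n\Int(V_n)$, which contains $g_n$ because $V_n$ is a neighbourhood of $e$; intersecting it with the dense open set $U_{n+1}$ gives a nonempty open set $O_{n+1}$, from which I pick $g_{n+1}$ and then select $V_{n+1}\in\mathcal B$ below $V_n$ with $V_{n+1}\subseteq g_{n+1}^{-1}O_{n+1}$. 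Since translations are homeomorphisms, this yields $g_{n+1}V_{n+1}\subseteq O_{n+1}\subseteq g_n\Int(V_n)\cap U_{n+1}\subseteq g_nV_n$, so both $\{V_n\}$ and $\{g_nV_n\}$ are decreasing and $g_{n+1}V_{n+1}\subseteq U_{n+1}$, as required.

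Once the construction is in place, spherical completeness applies directly: the decreasing sequence $V_n$ and the inclusion-decreasing translates $g_nV_n$ force $\bigcap_n g_nV_n\neq\emptyset$. Any point $x$ of this intersection satisfies $x\in g_0V_0\subseteq W$ and $x\in g_nV_n\subseteq U_n$ for all $n$, hence $x\in W\cap\bigcap_n U_n$; since $W$ was an arbitrary nonempty open set, $\bigcap_n U_n$ is dense and $G$ is Baire. I expect the only genuinely delicate point to be the bookkeeping that keeps $\{V_n\}$ decreasing while simultaneously forcing $g_{n+1}V_{n+1}$ inside both $U_{n+1}$ and the previous translate; passing to the interiors $\Int(V_n)$ is what lets the recursion proceed with honest open sets even though the base $\mathcal B$ is not assumed to consist of open neighbourhoods, and it is worth isolating this as the one step where care is needed.
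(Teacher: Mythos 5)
Your proof is correct and takes essentially the same route as the paper's: the paper runs the identical recursive construction of nested translates $g_nV_n$ (phrased complementarily, avoiding an increasing sequence of closed nowhere dense sets rather than staying inside dense open sets $U_n$) and then invokes spherical completeness in exactly the same way. You also correctly read the definition in its intended form --- the intersection that spherical completeness guarantees non-empty is $\bigcap_n g_nV_n$, not $\bigcap_n V_n$ --- which is precisely how the paper's own proof uses it.
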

  
  \begin{proof}
    Let $A$ be a meager   
    subset of $G$. There exist nowhere dense closed subsets $F_n\subseteq G$, $n \in \omega$, such that
    \[A\subseteq \bigcup_{n=1}^\infty F_n.\]
    Assume without loss in generality that the sequence $\{ F_n :n \in \omega \}$ is increasing by inclusion. Let $O$ be an arbitrary non-empty open subset of $G$. We will show that $O\not\subseteq A$.

    {\em Basis of recursion.} Since the set $F_1$ is nowhere dense in $G$, there exist $g_1\in G$ and $V_1\in{\mathcal B}$ with the property
    \[g_1V_1\subseteq O\setminus F_1.\]
    
    {\em Step of recursion.} Suppose we have chosen finite sequences $g_1,g_2,\ldots,g_n$ of elements of $G$ and $V_1\supseteq V_2\supseteq \ldots\supseteq V_n$ of nested basic neighbourhoods of $e$ in such a way that
    \[g_nV_n \subseteq g_{n-1}V_{n-1}\setminus F_{n}\subseteq \ldots \subseteq O\setminus F_1. \]
    
    Since the set $F_{n+1}$ is nowhere dense in $G$ and closed, there exist an element $g_{n+1}\in X$ and a basic neighbourhood $V_{n+1}\in {\mathcal B}$ such that 
    \[ g_{n+1}V_{n+1}\subseteq g_nV_n \setminus F_{n+1}.\] 
    Of course we may also assume $V_{n+1}\subseteq V_n$.
    We end up with a nested sequence of sets $g_nV_n$ which have the property
    \[g_nV_n\cap F_n=\emptyset.\]
    By the assumed spherical completeness 
    \[\bigcap_{n=1}^\infty g_nV_n\]
    is a non-empty 
set. It is contained in $O$ and disjoint from $A$. 
\end{proof}

Now we need some examples of spherically complete groups with linearly ordered bases of a prescribed cofinality type.
While not every non-archimedean ordered field is spherically complete, so are, for instance, saturated nonstandard models of the real numbers, and numerous other ordered fields (cf. e.g. \cite{A,hahn,LR,L,P91}). In this way, one obtains examples of Baire topological groups (linear groups over suitably chosen linearly ordered fields) with a local  $\w^\w$-base, which are not metrizable. We would prefer, however, to keep a more technical discussion of the ordered fields out of this paper, and instead concentrate on the second source of examples, the Bankston ultraproducts.

\begin{theorem}\label{Main_result_1}
        The Bankston ultraproduct, $G$, of a countable family of metrizable topological groups over a non-principal ultrafilter is spherically complete. As a consequence, 
        if all groups in the ultraproduct are non-discrete metrizable topological groups, then $G$ is
        a non-metrizable Baire group with a local $\w^\w$-base.
\end{theorem}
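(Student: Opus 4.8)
The plan is to establish spherical completeness first and then read off the three consequences. For the base $\mathcal B$ in the definition of spherical completeness I would take the family of balls $B_\e(e)$ of the left-invariant ${^\ast\R}$-valued metric $d$ constructed in Lemma \ref{l:rast}; by that lemma these balls are linearly ordered by inclusion and form a neighbourhood base at the identity. Writing $V_n=B_{\e_n}(e)$, left-invariance of $d$ gives $g_nV_n=B_{\e_n}(g_n)$, so a decreasing sequence $\{V_n\}$ corresponds to a non-increasing sequence of radii $\e_n$, and the hypothesis that $\{g_nV_n\}$ is decreasing by inclusion says precisely that the balls $B_{\e_n}(g_n)$ are nested. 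Thus the whole matter reduces to showing that in the ${^\ast\R}$-metric a countable nested sequence of balls has non-empty intersection, i.e. $\bigcap_n B_{\e_n}(g_n)\neq\emptyset$ (which is what is actually used in the proof of Lemma \ref{lemma1}).

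Fix representatives $g_n=[(g_n^i)_i]$ and $\e_n=[(\e_n^i)_i]$, and adjust the latter on a set outside $\mathcal U$ so that $\e_n^i>0$ for every $i$. The key point is that ball containment is a first-order property of the two-sorted structure (the groups together with the ordered field of radii): the statement ``for all $x$, $d_i(g_{n+1}^i,x)<\e_{n+1}^i$ implies $d_i(g_n^i,x)<\e_n^i$'' says exactly that $B_{\e_{n+1}^i}(g_{n+1}^i)\subseteq B_{\e_n^i}(g_n^i)$ in $G_i$. By {\L}o\'s's theorem the nesting $B_{\e_{n+1}}(g_{n+1})\subseteq B_{\e_n}(g_n)$ in the ultraproduct is therefore equivalent to
\[
D_n:=\{\,i\in I:\ B_{\e_{n+1}^i}(g_{n+1}^i)\subseteq B_{\e_n^i}(g_n^i)\,\}\in\mathcal U .
\]

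Now I would realise a point of the intersection by a diagonal argument exploiting that a countably indexed ultraproduct over a non-principal ultrafilter is countably saturated. Put $E_n=\bigcap_{k<n}D_k\in\mathcal U$; since $\mathcal U$ is non-principal I may shrink the $E_n$ by intersecting with cofinite sets so that they remain in $\mathcal U$, stay decreasing, and satisfy $\bigcap_n E_n=\emptyset$. For each coordinate $i$ let $n(i)=\max\{n:i\in E_n\}$, which is finite, and set $h^i=g_{n(i)}^i$, the centre of the innermost available ball at $i$; let $h=[(h^i)_i]$. For fixed $n$, every $i\in E_{n+1}$ has $n(i)>n$, and the successive inclusions recorded in $D_n,\dots,D_{n(i)-1}$ give $h^i=g_{n(i)}^i\in B_{\e_{n(i)}^i}(g_{n(i)}^i)\subseteq B_{\e_n^i}(g_n^i)$, i.e. $d_i(g_n^i,h^i)<\e_n^i$. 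Hence $\{i:d_i(g_n^i,h^i)<\e_n^i\}\supseteq E_{n+1}\in\mathcal U$, so $h\in B_{\e_n}(g_n)$ for every $n$, which proves $h\in\bigcap_n g_nV_n$. The main obstacle is exactly this realisation step: one must turn the $\mathcal U$-large containments into a single diagonal element, and non-principality (countable saturation) is essential here — the construction genuinely fails for a principal ultrafilter, where $G$ is just one of the factors.

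Finally I would deduce the three consequences. Spherical completeness together with Lemma \ref{lemma1} gives at once that $G$ is a Baire space. The local $\w^\w$-base is not new: $G$ is a quotient of the box product $\square_{i\in I}G_i$ of countably many metrizable groups, the countable box product of groups with a countable base at identity carries a local $\w^\w$-base, and this property passes to quotients. Non-metrizability is where non-discreteness enters: if each $G_i$ is non-discrete, then $d$ has arbitrarily small positive values, so the positive values of $d$ are coinitial of the uncountable type ${\mathrm{cf}}\,{^\ast\R}$; by the computation in Lemma \ref{l:rast} every countable family of positive radii has a positive lower bound, so no countable family of balls is coinitial and the linearly ordered ball base has uncountable cofinality. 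Since the balls form a base, $G$ has no countable base at the identity, hence is not first countable and therefore not metrizable.
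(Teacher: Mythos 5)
Your proof is correct and follows essentially the same route as the paper's: the ${^\ast\R}$-valued left-invariant metric of Lemma \ref{l:rast}, the reduction of spherical completeness to a countable nested sequence of balls, and a diagonal element $h$ with $h_i=g_{n(i),i}$ built over a decreasing sequence of $\mathcal U$-sets with empty intersection (your $E_n$ play exactly the role of the paper's $U_n$), with the consequences handled as the paper does or leaves implicit (quotient of the box product for the local $\w^\w$-base, Lemma \ref{lemma1} for Baire, and uncountable coinitiality of the positive cone of ${^\ast\R}$ combined with non-discreteness of the ultraproduct for non-metrizability). The only real difference is that you extract the coordinatewise inclusions $D_n$ via {\L}o\'s's theorem, whereas the paper avoids model theory altogether by transferring only the center inequalities $d(g_m,g_n)<\e_m$ for $m<n$, which follow immediately from the definition of the order on ${^\ast\R}$; your heavier step is valid but not needed.
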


\begin{proof}
        Only the spherical completeness is left to prove, and it is a manifestation of the standard phenomenon of saturation in ultraproducts.
        Define a left-invariant metric $d$ on $G$ with values in ${^\ast\R}$ like in the proof of Lemma \ref{l:rast}. To witness the spherical completeness, fix a base $\mathcal B$ at identity consisting of all open balls $B_{\e}$, $\e\in{^\ast\R}$, $\e>0$. Let $g_n$ be elements of the Bankston ultraproduct $G$ and $\e_n\in{^\ast\R}$, $n \in \omega$, a sequence of positive radii, satisfying $\e_1\geq \e_2\geq \ldots\geq \e_n\geq \ldots$ and for all $n$,
        \[g_nB_{\e_n}\subseteq g_{n-1}B_{\e_{n-1}}.\]
  For each $n$ choose a representative
  \[(g_{n,1},g_{n,2},\ldots,g_{n,n},\ldots)\in \prod_{n\in\omega} G_n\]
  of $g_n\in G$ and a representative
  \[(\e_{n,1},\e_{n,2},\ldots,\e_{n,k},\ldots)\in \R^{\omega}\]
  of $\e_n$. One can assume that for every fixed $i$, the sequence $(\e_{n,i})$ is (non-strictly) monotone decreasing in $n$.
  For every $n$ select an element $U_n\in {\mathcal U}$ with
  \[d_n(g_{n,i},g_{m,i})<\e_{m,i}\mbox{ for all }m<n\mbox{ and }i\in U_n.\] We can further assume that $U_{n+1}\subseteq U_n$ for all $n$ and $\cap_nU_n=\emptyset$. Now define an element $h$ whose representative $(h_i)$ (an element of the product of $G_i$) is selected such that
  \[h_i=g_{n,i}\mbox{ whenever }i\in U_n\setminus U_{n-1}.\]
  For every $n$,
  \[\{i\in\omega\colon d_n(h_i,g_{n,i})<\e_{n,i}\}\supseteq U_n,\]
  and therefore
  \[h\in g_nV_n.\] 
\end{proof}

\section{Finest group topologies preserving a filter convergence}\label{Free_groups}

\subsection{Main result}
Here we are concerned with the following situation. Let $G$ be a group, and let $({\mathcal F}_i)$ be a family of filters on $G$. Consider the finest group topology on $G$ with regard to which each of the filters converges to the identity: ${\mathcal F}_i\to e$. When does the topological group $G$ possess a local $\w^\w$-base? Here is the main technical result.
 
\begin{theorem}
 Let $H$ be a countable topological group, and let ${\mathcal F}_n$ be a countable family of filters on $H$ each one of which admits
 an $\w^\w$-base. Suppose the topology on $H$ is the finest group topology with regard to which ${\mathcal F}_n\to e$ for every $n$.
        Then $H$ has a local $\w^\w$-base at identity.
        \label{th:gn}
\end{theorem}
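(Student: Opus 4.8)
The plan is to write down an explicit neighbourhood base at $e$ for the finest group topology and then to recognise its index set as a countable power of $\w^\w$, which is order-isomorphic to $\w^\w$ itself. Fix enumerations $H=\{h_j:j\in\w\}$ of the (countable) group, with $h_0=e$, and for each filter $\mathcal F_n$ an $\w^\w$-base $(F_n(\sigma))_{\sigma\in\w^\w}$, monotone in the sense that $\sigma\leq\tau$ implies $F_n(\sigma)\supseteq F_n(\tau)$; replacing $F_n(\sigma)$ by $F_n(\sigma)\cap F_n(\sigma)^{-1}$ we may assume each set is symmetric. The building blocks of a neighbourhood will be the conjugates $h_jF_n(\sigma)h_j^{-1}$, of which there are countably many ``types'' $(j,n)\in\w\times\w$, each carrying a parameter $\sigma\in\w^\w$.

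For a family $\bar\sigma=(\sigma^{(i)}_{j,n})_{(i,j,n)\in\w\times\w\times\w}$ of elements of $\w^\w$ put
\[V(\bar\sigma)=\bigcup_{k\geq 1}\ \bigcup_{(j_1,n_1),\dots,(j_k,n_k)}\ \prod_{i=1}^{k} h_{j_i}\,F_{n_i}\bigl(\sigma^{(i)}_{j_i,n_i}\bigr)\,h_{j_i}^{-1},\]
the $i$-th factor of each word of length $k$ being a conjugate of a symmetric filter element whose tightness is governed by its position $i$ and its type $(j_i,n_i)$. First I would check that the sets $V(\bar\sigma)$ form a neighbourhood base at $e$ for a group topology: symmetry and $e\in V(\bar\sigma)$ are immediate; closure under conjugation by any $h_j$ holds because conjugating a word merely reindexes its letters and can be absorbed into a larger choice of parameters; and closure under multiplication is built into the union over all word lengths, the \emph{position-dependence} of the parameters being exactly what lets a concatenation of words of lengths $k$ and $l$ be matched against the requirements of $V(\bar\sigma)$ at positions $1,\dots,k+l$, so that for a suitably larger $\bar\tau$ one has $V(\bar\tau)\cdot V(\bar\tau)\subseteq V(\bar\sigma)$. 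Each $\mathcal F_n$ converges to $e$ in this topology since, taking $k=1$ and $j_1=0$, one has $V(\bar\sigma)\supseteq F_n(\sigma^{(1)}_{0,n})\in\mathcal F_n$.

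Next I would verify that this topology is the finest group topology making every $\mathcal F_n$ converge to $e$, i.e. that every neighbourhood $U$ of $e$ in any such topology contains some $V(\bar\sigma)$. Given $U$, repeatedly applying the topological-group axioms produces a ``square-root tree'' of neighbourhoods $U=U_0\supseteq U_1\supseteq\cdots$ with $U_{m+1}U_{m+1}\subseteq U_m$ and $h_jU_{m+1}h_j^{-1}\subseteq U_{m'}$ for some $m'\leq m$ and each $j\leq m$; since each $U_m$ lies in every $\mathcal F_n$, cofinality of the $\w^\w$-base lets me choose $\sigma^{(i)}_{j,n}$ so that $h_jF_n(\sigma^{(i)}_{j,n})h_j^{-1}$ sits inside the appropriate member of the tree, whence the corresponding words, and thus $V(\bar\sigma)$, are swallowed by $U$. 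The countability of $H$ is used precisely here, to keep the bookkeeping of conjugations countable; this is the step I expect to be the main obstacle, as one must run the recursive construction while simultaneously controlling conjugation by all of the countably many group elements and matching the resulting neighbourhoods against position-dependent filter parameters.

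Finally, the map $\bar\sigma\mapsto V(\bar\sigma)$ is monotone (order-reversing) and, by the previous paragraph, its image is cofinal in the neighbourhood filter at $e$. The index set is $(\w^\w)^{\w\times\w\times\w}$; an element of it is nothing but a function $\w\times\w\times\w\times\w\to\w$, so under any bijection $\w\times\w\times\w\times\w\cong\w$ this partially ordered set is order-isomorphic to $\w^\w$. Composing this isomorphism with $\bar\sigma\mapsto V(\bar\sigma)$ yields a monotone cofinal map $\w^\w\to\mathcal N_e$, which is the desired local $\w^\w$-base.
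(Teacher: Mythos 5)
Your overall architecture --- basic neighbourhoods indexed by $\omega^\omega$-parameters depending on position, conjugator and filter index, a monotone assignment $\bar\sigma\mapsto V(\bar\sigma)$, and the identification $(\omega^\omega)^{\omega\times\omega\times\omega}\cong\omega^\omega$ --- is exactly the paper's (which takes the neighbourhood description ready-made from Roelcke--Dierolf and only has to observe monotonicity and count the index set, using countability of $H$). The gaps are all in your ad hoc substitute for that description. First, you cannot ``assume each set is symmetric'' by passing to $F_n(\sigma)\cap F_n(\sigma)^{-1}$: this intersection need not belong to $\mathcal F_n$ and can be empty. For instance, on $H=\Q$ the filter generated by the sets $(0,1/m)\cap\Q$ converges to $0$ in the usual topology, yet each generator $F$ satisfies $F\cap F^{-1}=\emptyset$; with your convention every letter, hence every word, hence every $V(\bar\sigma)$ is empty, so $e\in V(\bar\sigma)$ fails and nothing is a neighbourhood base. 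Even when nonempty, the replacement destroys your convergence check: with $k=1$, $j_1=0$ you get $V(\bar\sigma)\supseteq F_n(\sigma)\cap F_n(\sigma)^{-1}$, which is \emph{contained in}, not containing, a member of $\mathcal F_n$, so $\mathcal F_n\to e$ is not established. The correct symmetrization (the paper's, following Roelcke--Dierolf) is the union $F\cup F^{-1}$, which is upward in the filter. Relatedly, symmetry of $V(\bar\sigma)$ is not ``immediate'': $(A_1A_2\cdots A_k)^{-1}=A_k^{-1}\cdots A_1^{-1}$ reverses positions, so the inverse of a word carries parameter $\sigma^{(k+1-i)}$ at position $i$, and with genuinely position-dependent parameters such a word need not lie in $V(\bar\sigma)$.

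The fatal gap, however, is the squaring axiom, where your stated reason is backwards. Concatenating a length-$k$ word of $V(\bar\tau)$ with a length-$l$ word of $V(\bar\tau)$ places the second word's position-$i$ letter at position $k+i$, while its parameter $\tau^{(i)}_{j,n}$ was chosen for position $i$. Matching the concatenation against the requirements of $V(\bar\sigma)$ therefore forces
\[
F_n\bigl(\tau^{(i)}_{j,n}\bigr)\subseteq F_n\bigl(\sigma^{(k+i)}_{j,n}\bigr)\quad\text{for \emph{every} } k\geq 1,
\]
i.e.\ a single filter element inside the countable intersection $\bigcap_{m>i}F_n\bigl(\sigma^{(m)}_{j,n}\bigr)$. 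Filters with $\omega^\omega$-bases need not contain such an element: already for the neighbourhood filter of $0$ in $\Q$ one has $\bigcap_m(-1/m,1/m)=\{0\}$, which contains no filter element. So position-dependence with a fixed letter order is precisely what \emph{breaks} the matching, and no ``suitably larger $\bar\tau$'' exists by your argument. This is exactly the difficulty that the Roelcke--Dierolf symmetric product is designed to overcome, and it is why the paper's basic sets look the way they do: they are unions over all word lengths \emph{and all permutations} of products in which each index occurs at most once, built from a \emph{pointwise monotone} sequence of maps $\Phi_n\colon H\to\mathcal F$, $\Phi_{n+1}(g)\subseteq\Phi_n(g)$; then passing from $(\Phi_n)$ to $(\Phi_{2n})$ and interleaving the (distinct) indices of the two factors yields the squaring inclusion. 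If you replace your $V(\bar\sigma)$ by those sets and the intersection by the union symmetrization, your bookkeeping of conjugators, your finest-topology verification (which is essentially sound, being the Birkhoff--Kakutani telescoping the paper isolates as a lemma), and your final cardinality computation go through and reproduce the paper's proof; as written, your family is not shown to be, and in general is not, a neighbourhood base of any group topology.
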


The proof of Theorem \ref{th:gn} occupies the rest of the subsection. The idea is very transparent. This argument is based on a description of the finest group topology respecting a filter convergence developed by Roelcke and Dierolf \cite{RD}. The basic neighbourhoods in this topology are indexed with sequences of functions from the group $H$ to a filter base, and this correspondence is monotone. Countably many filters can be equivalently replaced with a single filter having an $\w^\w$-base. It follows that there is a neighbourhood basis at $e$ indexed with $\left((\omega^\omega)^H\right)^\omega\cong \omega^\omega$. 

Let us start with the following observation.
It should be obvious that each of the filters ${\mathcal F}_n$ converges to identity if and only if the intersection filter ${\mathcal F}=\bigcap_{n\in\omega} {\mathcal F}_n$ converges to the identity.
Thus, under Theorem's hypotheses, the topology of $H$ is the finest group topology under which 
 ${\mathcal F}$ converges to identity. 
Hence the following standard argument reduces the proof to the case where there is only a single filter to consider.

\begin{lemma}
        Let $\{{\mathcal F}_n:~~ n\in\omega \}$ be a countable family of filters on a set $X$. Suppose that each filter ${\mathcal F}_n$ admits an $\w^\w$-base. Then the filter ${\mathcal F}=\bigcap_{n\in \omega} {\mathcal F}_n$ admits an $\w^\w$-base.
\end{lemma}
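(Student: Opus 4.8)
The plan is to produce an explicit monotone cofinal map from a directed set order-isomorphic to $\w^\w$ into ${\mathcal F}$. For each $n$ I would fix a monotone cofinal map $\w^\w\to{\mathcal F}_n$, written $\alpha\mapsto F^n_\alpha$, witnessing that ${\mathcal F}_n$ has an $\w^\w$-base; thus $\alpha\leq\beta$ implies $F^n_\alpha\supseteq F^n_\beta$, and every member of ${\mathcal F}_n$ contains some $F^n_\alpha$. The indexing set I want to use is the countable power $(\w^\w)^\w$ with the coordinatewise order, which is directed and, since $\w\times\w$ is countable, order-isomorphic to $\w^{\w\times\w}\cong\w^\w$. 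So it suffices to index a base of ${\mathcal F}$ by $(\w^\w)^\w$.

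Next I would define, for each $\vec\alpha=(\alpha_n)_{n\in\w}\in(\w^\w)^\w$, the set
\[ F_{\vec\alpha}=\bigcup_{n\in\w} F^n_{\alpha_n}. \]
The decisive point is that the \emph{union}, not the intersection, is the right object: since $F^m_{\alpha_m}\subseteq F_{\vec\alpha}$ and $F^m_{\alpha_m}\in{\mathcal F}_m$, and a filter is closed under supersets, we get $F_{\vec\alpha}\in{\mathcal F}_m$ for every $m$, whence $F_{\vec\alpha}\in\bigcap_m{\mathcal F}_m={\mathcal F}$. (An intersection would instead sit below each $F^n_{\alpha_n}$ and need not lie in any of the filters.)

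It then remains to verify the two defining properties. Monotonicity is immediate: if $\vec\alpha\leq\vec\beta$ coordinatewise then $F^n_{\alpha_n}\supseteq F^n_{\beta_n}$ for all $n$ by monotonicity of each $F^n_\bullet$, so $F_{\vec\alpha}\supseteq F_{\vec\beta}$. For cofinality, take any $A\in{\mathcal F}$; then $A\in{\mathcal F}_n$ for every $n$, so cofinality of the $n$-th base yields some $\alpha_n\in\w^\w$ with $F^n_{\alpha_n}\subseteq A$, and for $\vec\alpha=(\alpha_n)_n$ we obtain $F_{\vec\alpha}=\bigcup_n F^n_{\alpha_n}\subseteq A$. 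Since the map is monotone, image-cofinality is all that is needed (\cite{DT}, Fact 5).

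There is no genuine obstacle here; the only thing one must get right is the direction of the construction — taking \emph{unions} of base members of the individual filters, so that membership in each ${\mathcal F}_n$ is preserved by passing to supersets — together with the standard identification $(\w^\w)^\w\cong\w^\w$ of directed sets. Composing $\vec\alpha\mapsto F_{\vec\alpha}$ with this isomorphism gives the desired $\w^\w$-base for ${\mathcal F}$.
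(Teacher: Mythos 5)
Your proof is correct and follows essentially the same route as the paper's: fix monotone cofinal maps $V_n\colon\w^\w\to{\mathcal F}_n$, index by $(\w^\w)^\w\cong\w^{\w\times\w}\cong\w^\w$, and assign to each sequence the \emph{union} $\bigcup_n V_n(f_n)$, which is monotone and cofinal in the intersection filter. Your write-up merely spells out the membership and cofinality checks that the paper compresses into the word ``clearly.''
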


\begin{proof} 
        For every $n\in \omega$ fix a monotone cofinal map $V_n\colon \omega^\omega\to {\mathcal F}_n$. For each sequence of elements
         $f=(f_n)_{n \in \omega}$, where $f_n\in \omega^\omega$ (that is, $f$ is an element of
         $\left(\omega^\omega \right)^{\omega}\cong \omega^{\omega\times \omega}$) define the set
        \[V(f) = \bigcup_{n\in \omega} V_{n}(f_n)\subseteq X.\]
        Clearly, the sets of this kind form a base for the intersection filter $\cap_n {\mathcal F}_n$. Moreover, if $f,g\in \omega^{\omega\times \omega}$ and $f\leq g$ pointwise, the assumption on the monotonicity of $V_n$ implies that $V(f)\subseteq V(g)$. 
\end{proof}

We find it useful to present a summary of the technique developed by Roelcke and Dierolf, as well as for the reader's convenience to sketch their proofs, because the book \cite{RD} is out of print.
Denote by $S_n$ the symmetric group of all bijections of the set $n=\{0,..., n-1\}$.
For a sequence $\{B_n\}_{n=1}^{\infty}$ of subsets of a group $G$, define their symmetric product as follows:
\[\sym{ B_n}=\sym{ B_n}_{n=1}^{\infty} = \bigcup_{n=1}^{\infty}\bigcup_{\sigma\in S_n} B_{\sigma(1)}\cdot B_{\sigma(2)}\cdot \ldots \cdot B_{\sigma(n)}.\]

  Let now $\mathcal F$ be a filter of subsets of a group $G$.
  For a mapping $\Phi\colon G\to {\mathcal F}$ denote
  \begin{equation}
          {\mathcal V}_{\Phi} =\bigcup_{g\in G} g^{-1}(\Phi(g)\cup\Phi(g)^{-1})g.
          \label{eq:union}
  \end{equation}
  According to \cite{RD}, the sets of the form 
  \[\sym{{\mathcal V}_{\Phi_n}}_{n=1}^{\infty},\] 
  where $(\Phi_n)$ runs over all sequences of maps from $G$ to $\mathcal F$, 
  form a neighbourhood base at identity in the finest group topology on $G$ in which ${\mathcal F}\to e$. 
  This is a consequence of the following simple results.
  
    \begin{lemma}
            Every set $\sym{{\mathcal V}_{\Phi_n}}$ is symmetric. \qed
  \end{lemma}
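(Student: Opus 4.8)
The plan is to reduce the symmetry of $\sym{{\mathcal V}_{\Phi_n}}$ to two separate observations: that each individual block ${\mathcal V}_{\Phi_n}$ is symmetric, and that the index set $S_n$ is closed under the reversal permutation. These two facts together force the union over all permutations to be stable under inversion.

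First I would verify that each set ${\mathcal V}_{\Phi_n}$ is itself symmetric. Writing $B={\mathcal V}_{\Phi}$ for a single map $\Phi\colon G\to{\mathcal F}$, one computes
\[B^{-1}=\bigcup_{g\in G}\bigl(g^{-1}(\Phi(g)\cup\Phi(g)^{-1})g\bigr)^{-1}=\bigcup_{g\in G}g^{-1}(\Phi(g)\cup\Phi(g)^{-1})^{-1}g.\]
Since $(\Phi(g)\cup\Phi(g)^{-1})^{-1}=\Phi(g)^{-1}\cup\Phi(g)$ coincides with $\Phi(g)\cup\Phi(g)^{-1}$, we conclude $B^{-1}=B$. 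The explicit inclusion of the inverse $\Phi(g)^{-1}$ in the definition \eqref{eq:union} is precisely what makes this step go through.

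Next I would handle the symmetric product. Put $B_n={\mathcal V}_{\Phi_n}$, so each $B_n$ is symmetric by the previous paragraph. A typical element of $\sym{B_n}$ has the form $x=b_{\sigma(1)}b_{\sigma(2)}\cdots b_{\sigma(n)}$ for some $n$, some $\sigma\in S_n$, and some $b_{\sigma(i)}\in B_{\sigma(i)}$. Then
\[x^{-1}=b_{\sigma(n)}^{-1}b_{\sigma(n-1)}^{-1}\cdots b_{\sigma(1)}^{-1},\]
and by the symmetry of each factor, $b_{\sigma(i)}^{-1}\in B_{\sigma(i)}^{-1}=B_{\sigma(i)}$. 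Letting $\rho\in S_n$ denote the reversal permutation and setting $\tau=\sigma\circ\rho\in S_n$, the word $x^{-1}$ becomes a product of one element from each $B_{\tau(j)}$ taken in increasing order of $j$. Hence $x^{-1}\in B_{\tau(1)}B_{\tau(2)}\cdots B_{\tau(n)}\subseteq\sym{B_n}$, which is the desired conclusion.

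There is no genuine obstacle beyond careful bookkeeping of indices; the entire content of the lemma is that inverting a word reverses the order of its blocks (a permutation of the index set) while inverting each block individually (harmless by the symmetry of the ${\mathcal V}_{\Phi_n}$), so both operations keep us inside the union over all permutations.
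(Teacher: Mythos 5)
Your proof is correct and is precisely the argument the paper leaves implicit (the lemma is stated with its proof omitted as obvious): each ${\mathcal V}_{\Phi_n}$ is symmetric because the defining union \eqref{eq:union} pairs $\Phi(g)$ with $\Phi(g)^{-1}$ inside the conjugation, and inverting a product reverses the order of the blocks, which is absorbed by composing $\sigma$ with the reversal permutation, staying inside the union over all of $S_n$. There is nothing to add or correct.
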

  
  \begin{lemma} Assuming the sequence of mappings $\Phi_n\colon G\to {\mathcal F}$ is pointwise monotone (that is, $\Phi_{n+1}(g)\subseteq \Phi_n(g)$ for each $g\in G$), we have 
    \[\sym{{\mathcal V}_{\Phi_{2n}}}^2\subseteq\sym{{\mathcal V}_{\Phi_n}}.\]
    \qed
  \end{lemma}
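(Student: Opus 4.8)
The plan is to isolate the single structural fact contributed by monotonicity and then finish with a combinatorial reindexing of the factors. Write $W_n={\mathcal V}_{\Phi_n}$. Since $\Phi_{n+1}(g)\subseteq\Phi_n(g)$ for every $g\in G$, we get $\Phi_{n+1}(g)\cup\Phi_{n+1}(g)^{-1}\subseteq\Phi_n(g)\cup\Phi_n(g)^{-1}$; conjugating by $g$ and taking the union over $g$ in \eqref{eq:union} gives $W_{n+1}\subseteq W_n$. Hence $(W_n)$ is a decreasing sequence of (symmetric) subsets of $G$, so $W_m\subseteq W_c$ whenever $c\le m$. This monotonicity of $(W_n)$ is the only input I will use from the hypothesis.

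First I would describe the two sides explicitly. By definition of the symmetric product, a typical element of $\sym{W_{2n}}$ is a product $w_{2\sigma(1)}\cdots w_{2\sigma(k)}$ with $\sigma\in S_k$ and $w_{2i}\in W_{2i}$ --- that is, one factor chosen from each of $W_2,W_4,\ldots,W_{2k}$, multiplied in some order. Consequently a typical element of $\sym{W_{2n}}^2$ is the concatenation of two such products, hence a product of $k+l$ factors drawn one each from $W_2,\ldots,W_{2k}$ and from $W_2,\ldots,W_{2l}$. Listing the even indices that occur, each value $2,4,\ldots$ appears \emph{at most twice}. To land inside $\sym{W_n}$ it suffices to re-tag these $k+l$ factors with distinct labels forming an initial segment $\{1,\ldots,k+l\}$, in such a way that each new label does not exceed the old (even) index of its factor; monotonicity of $(W_n)$ then upgrades each factor to the correspondingly larger set, and the whole product becomes an element of $\sym{W_n}$.

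The crux, and the place where the factor of $2$ earns its keep, is the existence of such a relabelling. Denoting by $m_i$ the even index of the $i$-th factor, I want an injection $i\mapsto c_i$ into the positive integers with $c_i\le m_i$; then $W_{m_i}\subseteq W_{c_i}$, so the product lies in the ordered product $\prod_i W_{c_i}$, and compressing the labels $\{c_i\}$ to their ranks $1,\ldots,k+l$ (which only decreases them, preserving every inclusion) presents it as an element of $\sym{W_n}$. Existence of the injection is exactly Hall's condition for a matching whose neighbourhoods are initial segments: for each threshold $t$ the number of factors with $m_i\le t$ is at most $2\lfloor t/2\rfloor\le t$, while $\{1,\ldots,t\}$ supplies $t$ admissible labels. (One can also exhibit the matching by hand, sending, for $k\le l$, the two factors of index $2j$ to $2j-1$ and $2j$ and the surplus factors of index $2(k+j)$ to $2k+j$, but the Hall formulation makes transparent why the doubling is precisely what creates the necessary room.) I expect this reindexing to be the only genuinely nontrivial step; everything else is bookkeeping around the definition of the symmetric product.
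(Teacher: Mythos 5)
Your proof is correct, and it takes essentially the approach the paper intends: the paper states this lemma without proof (as part of its summary of the Roelcke--Dierolf technique), and the standard justification is exactly your reindexing trick --- each even index occurs at most twice among the $k+l$ factors, so one can retag the factors injectively with labels forming the initial segment $\{1,\dots,k+l\}$, each label not exceeding the original index, and the decreasingness of $({\mathcal V}_{\Phi_n})$ (which you correctly derive from pointwise monotonicity of $\Phi_n$) absorbs each factor into the set with the smaller label. Your Hall's-theorem packaging and the explicit pairing $2j\mapsto\{2j-1,2j\}$ are merely two presentations of the same bookkeeping, so there is nothing genuinely different from the intended argument.
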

 
  \begin{lemma} For each $h\in G$, denote $\Phi^h$ the right translate of $\Phi$ by $h$, that is, $\Phi^h(g)=\Phi(gh)$. Then
    \[h^{-1}\sym{{\mathcal V}_{\Phi^h_n}}h\subseteq \sym{{\mathcal V}_{\Phi_n}}.\]
    \qed
  \end{lemma}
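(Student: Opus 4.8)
The plan is to reduce the symmetric-product inclusion to the single-index identity
\[h^{-1}{\mathcal V}_{\Phi_n^h}h={\mathcal V}_{\Phi_n}\]
and then transport this through the symmetric product, exploiting that conjugation by a fixed element is a group automorphism and therefore preserves both finite products and arbitrary unions of subsets.

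First I would establish the single-index identity. Unwinding the definition \eqref{eq:union} of ${\mathcal V}_{\Phi_n^h}$ and conjugating, one gets
\[h^{-1}{\mathcal V}_{\Phi_n^h}h=\bigcup_{g\in G}h^{-1}g^{-1}\bigl(\Phi_n(gh)\cup\Phi_n(gh)^{-1}\bigr)gh.\]
The substitution $g\mapsto g'=gh$ is a bijection of $G$, under which $h^{-1}g^{-1}=(gh)^{-1}=g'^{-1}$, the right-hand factor $gh$ equals $g'$, and the middle set becomes $\Phi_n(g')\cup\Phi_n(g')^{-1}$. Hence each summand turns into $g'^{-1}\bigl(\Phi_n(g')\cup\Phi_n(g')^{-1}\bigr)g'$ and the whole union is exactly ${\mathcal V}_{\Phi_n}$; so conjugation by $h$ carries ${\mathcal V}_{\Phi_n^h}$ onto ${\mathcal V}_{\Phi_n}$ with equality, which is already more than the stated inclusion.

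Next I would propagate this to the symmetric products. Since $x\mapsto h^{-1}xh$ is an automorphism, $h^{-1}(A_1\cdots A_k)h=(h^{-1}A_1h)\cdots(h^{-1}A_kh)$ for any subsets $A_1,\dots,A_k$, and conjugation commutes with unions. Applying these facts termwise to the defining union of $\sym{{\mathcal V}_{\Phi_n^h}}$, and invoking the single-index identity on each factor ${\mathcal V}_{\Phi_{\sigma(j)}^h}$, yields
\[h^{-1}\sym{{\mathcal V}_{\Phi_n^h}}h=\bigcup_{n=1}^{\infty}\bigcup_{\sigma\in S_n}{\mathcal V}_{\Phi_{\sigma(1)}}\cdots{\mathcal V}_{\Phi_{\sigma(n)}}=\sym{{\mathcal V}_{\Phi_n}},\]
and the claimed inclusion follows at once (in fact as an equality).

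There is no substantial obstacle here: this is a lemma whose proof is omitted precisely because it is routine. The only point demanding attention is the bookkeeping in the reindexing step, where one must check that after the substitution $g'=gh$ the outer conjugation $h^{-1}(\cdot)h$ is fully absorbed, that is, that $h^{-1}g^{-1}$ collapses to $g'^{-1}$ and the trailing $gh$ to $g'$ simultaneously. Everything else is the formal remark that inner automorphisms respect products and unions of sets.
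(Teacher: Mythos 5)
Your proof is correct and is exactly the routine verification the paper leaves to the reader (the lemma is stated with \qed and no proof): the substitution $g'=gh$ absorbs the conjugation at the single-index level, giving $h^{-1}{\mathcal V}_{\Phi_n^h}h={\mathcal V}_{\Phi_n}$, and since $x\mapsto h^{-1}xh$ is an automorphism it commutes with set products and unions, so the identity passes to the symmetric products termwise. You even obtain equality, which is stronger than the stated inclusion; there is nothing to add.
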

  
  Also, it is clear that every set of the form $\sym{{\mathcal V}_{\Phi_n}}$ contains an element of the filter $\mathcal F$, so ${\mathcal F}\to e$ with regard to this group topology. It remains to prove that the resulting group topology is indeed the finest one making the filter converge. 
  First of all, the proof of the Birkhoff--Kakutani lemma about continuous pseudometrics on topological groups implies:
  
\begin{lemma}
  If $\{V_n\}_{n=1}^{\infty}$ be a sequence of neighbourhoods of the identity in a topological group $G$ such that $V_n^{-1}=V_n$ and $V_{n+1}^2\subseteq V_n$ for all $n$. Then for every $k$ one has
\[
\sym{V_n}_{n=k+2}^{\infty}\subseteq V_k.\]
\label{l:bk}
\qed
\end{lemma}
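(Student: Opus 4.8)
The plan is to unwind the definition of the symmetric product and reduce the containment to the dyadic chain estimate that underlies the Birkhoff--Kakutani construction. By the definition of $\sym{\cdot}$, reindexing $B_l=V_{k+1+l}$, one has
\[\sym{V_n}_{n=k+2}^{\infty}=\bigcup_{m=1}^{\infty}\bigcup_{\sigma\in S_m}V_{k+1+\sigma(1)}\cdot V_{k+1+\sigma(2)}\cdots V_{k+1+\sigma(m)},\]
so it suffices to prove that every single product $V_{j_1}V_{j_2}\cdots V_{j_m}$ is contained in $V_k$, where $j_1,\ldots,j_m$ are the \emph{distinct} indices of an initial segment $\{k+2,k+3,\ldots,k+1+m\}$ taken in an arbitrary order; the full containment then follows by taking the union over $m$ and $\sigma$. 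First I would record the elementary fact that $V_j\subseteq V_{k+1}$ whenever $j\geq k+1$, since $e\in V_{k+1}$ gives $V_j\subseteq V_{k+1}\subseteq V_{k+1}^2\subseteq V_k$.

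The governing quantity is the dyadic weight $\sum_{i=1}^m 2^{-j_i}$. Because the indices are distinct and bounded below by $k+2$, this weight is at most $\sum_{l\ge k+2}2^{-l}=2^{-(k+1)}$. I would therefore isolate the target estimate, which is exactly the content of the Birkhoff--Kakutani argument referenced in Lemma \ref{l:bk}: \emph{any} product $V_{j_1}\cdots V_{j_m}$ with $\sum_i 2^{-j_i}\le 2^{-(k+1)}$ is contained in $V_k$. This I would prove by induction on the number of factors $m$. The base case $m=1$ is immediate from the observation above. For the inductive step the key structural remark is that, once $m\geq 2$, the weight constraint forces every index to satisfy $j_i\geq k+2$: a factor of index $k+1$ would already consume the whole budget $2^{-(k+1)}$, leaving no room for a second positive term. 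Hence no single factor exhausts the budget, and one can cut the word at the point where the accumulated weight first crosses half the budget $2^{-(k+2)}$, apply the induction hypothesis at level $k+1$ to the two resulting sub-blocks so that each lands in $V_{k+1}$, and then collapse the two pieces with the single relation $V_{k+1}^2\subseteq V_k$.

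The hard part is the non-commutative bookkeeping under the square condition $V_{n+1}^2\subseteq V_n$ alone, rather than a cube condition. With only squares at one's disposal one cannot afford to peel off a ``middle'' factor and thereby break a word into three blocks, since $V_{k+1}^3$ need not lie in $V_k$; every stage of the induction must be organized into \emph{exactly two} sub-blocks, and the delicate point is choosing the cut so that the straddling factor does not inflate either block past the budget $2^{-(k+2)}$ needed to invoke the hypothesis at level $k+1$. This is precisely the origin of the factor-of-two (equivalently, one-level) loss in the Birkhoff--Kakutani estimate. That loss is exactly what the statement builds in: the symmetric product has dyadic weight only $2^{-(k+1)}$, one level short of the ideal $V_{k+1}$, leaving precisely the one extra level of slack required to absorb the square-only bookkeeping and land in $V_k$ — which is the reason the shift in the statement is from $k+2$ to $k$ rather than to $k+1$.
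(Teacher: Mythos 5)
Your reduction to single products with distinct indices, the monotonicity remark, and even the target estimate you isolate (any product $V_{j_1}\cdots V_{j_m}$ with $\sum_i 2^{-j_i}\le 2^{-(k+1)}$ lies in $V_k$) are all correct, and that estimate does imply the lemma, since distinct indices $\ge k+2$ have total weight at most $2^{-(k+1)}$. The genuine gap is in your inductive step: the cut you describe need not exist. Your induction hypothesis at level $k+1$ applies only to blocks of weight at most $2^{-(k+2)}$, so you need to split the word into \emph{two} contiguous blocks each within that half-budget; the straddling factor can make this impossible. Concretely, take the word $V_{k+3}V_{k+2}V_{k+4}$, which occurs in $\sym{V_n}_{n=k+2}^{\infty}$ (it is the permutation $(2,1,3)$ of the initial segment $\{k+2,k+3,k+4\}$) and has total weight $7\cdot 2^{-(k+4)}\le 2^{-(k+1)}$. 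In units of $2^{-(k+4)}$ the factor weights are $2,4,1$ and the half-budget is $4$: cutting after the first factor leaves a right block of weight $5$, cutting after the second leaves a left block of weight $6$, so in every case one of the two blocks exceeds $2^{-(k+2)}$ and the induction hypothesis cannot be invoked for it. The containment for this word is still true, e.g. $V_{k+3}V_{k+2}V_{k+4}\subseteq V_{k+3}V_{k+2}V_{k+2}\subseteq V_{k+3}V_{k+1}\subseteq V_{k+1}V_{k+1}\subseteq V_k$, but notice that this is an \emph{asymmetric} two-block argument: the pieces land in $V_{k+2}$ and $V_{k+1}$, not both in $V_{k+1}$, and the second piece has weight exceeding your half-budget, so its membership in $V_{k+1}$ is not delivered by your inductive statement.

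In other words, the ``delicate point'' you flag about the straddling factor is not a bookkeeping detail one can finesse within your scheme; with a symmetric half-budget cut and the weight-based hypothesis alone, the induction simply does not close. A correct proof must strengthen what is being inducted on or change the decomposition: for instance, one can prove the sharp ordered (alphabetic) Kraft-type statement by the Gilbert--Moore midpoint construction, assigning to the $i$-th factor a dyadic interval of length $2^{-j_i+1}$ around the cumulative sum $\sum_{l<i}2^{-j_l}+2^{-j_i-1}$ and reading the resulting order-compatible binary tree as a bracketing of the word by the relations $V_{a}V_{a}\subseteq V_{a-1}$; alternatively one can process the word left to right, maintaining the prefix product inside a product $V_{a_1}\cdots V_{a_r}$ with strictly increasing indices and absorbing each new factor by binary carries. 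Either route yields the lemma; your two-block induction, as stated, does not, and since the paper itself gives no written proof (it appeals to the Birkhoff--Kakutani argument), your argument has to stand on its own.
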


Now let $G$ be equipped with some group topology making $\mathcal F$ converge to $e$. Fix a neighbourhood $V$ of identity in this group topology. We will embed into $V$ a set of the form $\sym{{\mathcal V}_{\Phi_n}}_{n=1}^{\infty}$ for a suitably chosen sequence $(\Phi_n)$. 

Find a sequence $\{V_n\}_{n=1}^{\infty}$ of symmetric neighbourhoods of the identity with $V_{n+1}^2\subseteq V_n$ for all $n$ and $V_0=V$. For each $n$, select an $F_n\in{\mathcal F}$ with $F_n\subseteq V_n$. Given a $g\in G$ and $n\in \omega$, there is $m\geq n$ with $g^{-1}V_mg\subseteq V_n$. In particular, $g^{-1}F_mg\subseteq V_n$. Set $\Phi_n(g)=F_m$. Now it follows that for every $n$,
\[{\mathcal V}_{\Phi_n}=\bigcup_{g\in G} g^{-1}(\Phi_n(g)\cup\Phi_n(g)^{-1})g\subseteq V_n,\]
and by Lemma \ref{l:bk},
\[\sym{{\mathcal V}_{\Phi_n}}_{n=1}^{\infty}\subseteq V.\]

We have just reproved the result by Roelcke and Dierolf. 

To conclude the proof of Theorem \ref{th:gn}, fix a base, $\mathcal B$, for the filter $\mathcal F$, indexed with $\omega^\omega$. According to the above result by Roelcke and Dierolf, there is a base at identity for the finest group topology on $H$ with $\mathcal F\to e$, indexed with sequences of maps $H\to {\mathcal F}$, that is, elements of the partially ordered set
  \[\left({\mathcal B}^{H}\right)^{\omega} \cong\left(\omega^\omega\right)^{H\times\omega}\cong \omega^{\omega\times H\times \omega}.\]
It follows from the definition of the basic neighbourhoods $\sym{{\mathcal V}_{\Phi_n}}_{n=1}^{\infty}$
that, if for every $h\in H$ and $n\in\N$ one has $\Phi_n(g)\subseteq\Psi_n(g)$, then
\[\sym{{\mathcal V}_{\Phi_n}}_{n=1}^{\infty}\subseteq \sym{{\mathcal V}_{\Psi_n}}_{n=1}^{\infty}.\]
In other words, the map 
\[ \left(\omega^\omega\right)^{H\times\omega}\ni (\Phi_n)_{n=1}^{\infty} \mapsto \sym{{\mathcal V}_{\Phi_n}}_{n=1}^{\infty}\]
is monotone. We are done.
  
  In \cite{P} the above technique was used to describe a neighbourhood base in the free topological group on a uniform space, and in \cite{PU}, to construct an example of a projectively universal countable metrizable group.
 
We do not know if ``countable'' in the statement of Theorem \ref{th:gn} can be replaced with ``separable'', but the result leads to a number of new and important corollaries for separable topological groups.

 \subsection{Corollaries for free products}

 The {\em free product} of a family of topological groups $G_i$, $i\in I$ is a coproduct in the category of topological groups. 
This construction was investigated by Graev \cite{Gr}, who has shown that the free product $\ast_{i\in I}G_i$ is algebraically isomorphic to the algebraic free product, and that every topological group $G_i$ canonically embeds into the free product as a topological subgroup (this is not obvious). The free product is defined by the universality property: every collection of continuous homomorphisms $h_i\colon G_i\to G$ to some common topological group $G$ uniquely extends to a continuous homomorphism $\ast_{i\in I}G_i\to G$. Equivalently, $\ast_{i\in I}G_i$ is the algebraic free product of the groups $G_i$, given the finest group topology inducing the original topology on each subgroup $G_i$. 

\begin{corollary}
        The free product $G={\ast}_{n\in \omega}G_n$ of countably many separable topological groups each having a local $\w^\w$-base has a local $\w^\w$-base. In particular, this conclusion holds for the free product of countably many Polish groups. \qed
\end{corollary}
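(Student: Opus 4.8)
The plan is to reduce the statement to the countable case already settled by Theorem~\ref{th:gn}, using separability to produce a countable dense subgroup that ``carries'' the topology, and then to transfer the conclusion back to $G$ through the two-sided completion. The first step is to identify the topology correctly. By Graev's description of the free product, the topology of $G=\ast_{n\in\w}G_n$ is the finest group topology inducing the original topology on each $G_n$; equivalently, writing $\mathcal N_n$ for the neighbourhood filter of the identity of $G_n$, viewed as a filter on $G$, it is the finest group topology in which every $\mathcal N_n$ converges to $e$. Since each $G_n$ has a local $\w^\w$-base, each $\mathcal N_n$ has an $\w^\w$-base, and hence so does $\mathcal F=\bigcap_{n\in\w}\mathcal N_n$ by the lemma on intersections of filters. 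Thus $G$ is exactly a group carrying the finest topology making a single filter with an $\w^\w$-base converge --- the situation of Theorem~\ref{th:gn} --- the only missing hypothesis being that $G$ be countable.

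The second step is to manufacture countability. As each $G_n$ is separable, I would fix a countable dense subgroup $D_n\le G_n$; then $D=\ast_{n\in\w}D_n$ is a countable subgroup of $G$, dense in $G$ by continuity of the group operations. Each $D_n$, being a subgroup of $G_n$, has a local $\w^\w$-base, so the free product topology on the \emph{countable} group $D$ is again the finest group topology making the trace filters $\mathcal N_n|_{D_n}$ converge, and Theorem~\ref{th:gn} applies verbatim to give $D$ a local $\w^\w$-base. The final step transfers this back: I claim that $D$, equipped with its subspace topology from $G$, is a dense subgroup with a local $\w^\w$-base; its two-sided completion --- which coincides with that of $G$ because $D$ is dense --- then has a local $\w^\w$-base, since completions preserve this property, and $G$, being a subgroup of this completion, inherits one as well.

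The main obstacle is the matching of topologies hidden in the last step: one must check that the free product topology on $D$ (to which Theorem~\ref{th:gn} was applied) agrees with the subspace topology inherited from $G$, i.e. that $D\hookrightarrow G$ is a topological embedding. One inclusion is automatic, since the subspace topology makes every $D_n\hookrightarrow D$ continuous and is therefore coarser than the free product topology. The reverse inclusion is the delicate point, and it is exactly here that separability is used. In Roelcke--Dierolf terms a basic neighbourhood is $\sym{{\mathcal V}_{\Phi_n}}$ with ${\mathcal V}_{\Phi}=\bigcup_{g\in G}g^{-1}(\Phi(g)\cup\Phi(g)^{-1})g$, and the trouble is that the conjugating index $g$ ranges over all of $G$, not merely over $D$. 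I would show, using the continuity of conjugation together with the density of $D$, that this conjugation union is controlled by the values of $\Phi$ on $D$, so that the maps $\Phi_n$ may be taken to be determined by their restrictions to the countable set $D$. This simultaneously collapses the index set $\left(\w^\w\right)^{G\times\w}$ down to $\left(\w^\w\right)^{D\times\w}\cong\w^\w$ and yields the embedding.

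I expect this conjugation-control bookkeeping to be the principal technical burden of the argument, as it is the precise place where ``countable'' in Theorem~\ref{th:gn} is traded for ``separable''. Once it is in place, everything else is formal: the chain ``Theorem~\ref{th:gn} on $D$ $\Rightarrow$ completion of $D$ $=$ completion of $G$ has a local $\w^\w$-base $\Rightarrow$ $G\le$ completion has a local $\w^\w$-base'' invokes only the closure of the class under two-sided completions and under subgroups. The particular case of countably many Polish groups is then immediate, since Polish groups are separable and metrizable, hence possess a local $\w^\w$-base.
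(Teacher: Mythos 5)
Your reduction to the countable case is exactly the paper's first step: countable dense subgroups $D_n\le G_n$, the observation that the free product topology on $D=\ast_{n}D_n$ is the finest group topology making the trace filters (equivalently their intersection, which has an $\w^\w$-base) converge to $e$, and an application of Theorem~\ref{th:gn}. The gap is in your transfer step. Everything hinges on the claim that the free product topology on $D$ coincides with the subspace topology inherited from $G$, and your proposed ``conjugation-control bookkeeping'' is not a proof and faces concrete obstructions. First, the sets $\Phi(g)\in{\mathcal F}$ appearing in ${\mathcal V}_{\Phi}=\bigcup_{g\in G}g^{-1}(\Phi(g)\cup\Phi(g)^{-1})g$ are elements of a convergent filter --- thin subsets sitting inside the factors $G_n$ --- not neighbourhoods of $e$ in $G$, so continuity of conjugation gives no uniform control of $g^{-1}\Phi(g)g$ in terms of values of $\Phi$ at nearby points of $D$; in the finest group topology making a filter converge, the behaviour of conjugation is precisely the data that the maps $\Phi$ encode, so there is nothing independent to bootstrap from (any attempt to absorb the error $d^{-1}g$ requires neighbourhood-sized sets, which is circular). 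Second, an element of $\sym{{\mathcal V}_{\Phi_n}}\cap D$ is a product $v_1\cdots v_k$ whose individual factors and conjugators need not lie in $D$, so the trace of a basic neighbourhood on $D$ is not determined by restricting the $\Phi_n$ to $D$. And without the embedding, the remaining chain collapses, because a local $\w^\w$-base does not pass from a finer group topology to a coarser one (the paper's own example: a countable dense subgroup of the product of $\w_1$ copies of $\Z_2$ has no local $\w^\w$-base, while its discrete refinement trivially does), so knowing only that the free product topology on $D$ refines the subspace topology is not enough.

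The paper's proof is engineered precisely to avoid this point, and the manoeuvre is worth learning: instead of pushing $D$ down into $G$, pull $G$ up into the completion of $D$. Let $\hat D$ be the two-sided completion of $D$ with its free product topology. The identity map from $D$ to $D$ with the (coarser) subspace topology from $G$ is continuous, hence extends to a continuous homomorphism $i\colon\hat D\to\hat G$. Since $D_n$ is dense in $G_n$, the closure of $D_n$ in $\hat D$ is a copy of $\hat G_n$, and $i$ restricts to a topological isomorphism of it onto $\hat G_n\subseteq\hat G$; in this way each $G_n$ is canonically identified with a topological subgroup of $\hat D$. Let $\tilde D\le\hat D$ be the subgroup generated by these copies. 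By the universal property of $G=\ast_n G_n$, the continuous algebraic isomorphism $i\vert_{\tilde D}\colon\tilde D\to G$ has an inverse whose continuity follows because the topology of $\tilde D$ induces the original topology on each copy of $G_n$ and hence cannot be finer than the free product topology; thus $\tilde D\cong G$ as topological groups. Now the local $\w^\w$-base passes from $D$ (Theorem~\ref{th:gn}) to $\hat D$ (completions preserve it), then to the subgroup $\tilde D$, i.e.\ to $G$. Note that your embedding claim is in fact true, but only as a byproduct of this argument; it is not something one can assume, or easily prove directly, at the outset.
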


\begin{proof}
Select a countable dense subgroup $H_n$ in each $G_n$, and let $H$ be algebraically generated by $H_n,n\in\N$ in $G$. Algebraically, $H$ is the free product of $H_n$. Put on $H$ the finest group topology inducing the original topologies on $H_n$. Such a topology exists and is at least as fine as the induced topology from $G$. The canonical embedding of $H$ (with the new topology) into $G$ is continuous and so extends over two-sided completions of the two groups, $i\colon\hat H\to\hat G$. The restriction of $i$ to each $\hat H_n$ is a topological group isomorphism with $\hat G_n$, and in this way
every topological group $G_n$ is canonically identified with a subgroup of $\hat H$;
denote $\tilde H$ the topological subgroup of $\hat H$ generated by all such copies of groups $G_n$. Since $G$ is an algebraic free product, it follows from the universality property that 
the restriction of $i$ to $\tilde H$ is a (continuous) algebraic isomorphism with $G$. The topology of $\tilde H$ cannot be finer than the topology of the free product $G$, and so $i\vert_{\tilde H}$ is an isomorphism of topological groups.  

The topology of $H$ as defined above is the finest topology with regard to which each neighbourhood filter of $H_n$ converges to $e$. By Theorem \ref{th:gn}, $H$ has an $\w^\w$-base. The same holds for $\hat H$, $\tilde H$, and therefore $G$.
\end{proof}

Here is a formally more general, but in fact equivalent, statement.

\begin{corollary}
        Let $G$ be a topological group which is topologically generated by countably many separable subgroups $G_n$ each having a local $\w^\w$-base.
         Suppose the topology on $G$ is the finest group topology inducing the given topologies on each $G_n$. 
         Then $G$ admits a local $\w^\w$-base.  
         \label{th:chain}
 \end{corollary}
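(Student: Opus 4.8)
The plan is to deduce this from the preceding corollary on free products together with the closure of our class under quotients, subgroups and completions (noted in the Introduction). Write $P={\ast}_{n\in\omega}G_n$ for the free product. By the preceding corollary, $P$ admits a local $\w^\w$-base. Since the topology of $G$ induces the given topology on each $G_n$, every inclusion $G_n\hookrightarrow G$ is continuous, so the universal property of $P$ supplies a continuous homomorphism $\phi\colon P\to G$ extending these inclusions. Its image is the algebraically generated subgroup $\Gamma=\langle G_n:n\in\omega\rangle$, which is dense in $G$ by hypothesis.

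The key step is to show that $\phi$ is a topological quotient map onto $\Gamma$; that is, that the subspace topology $\sigma$ on $\Gamma$ inherited from $G$ agrees with the quotient topology $\tau_q=P/\ker\phi$. Continuity of $\phi$ gives $\sigma\le\tau_q$. For the reverse inequality I would first check that $\tau_q$ induces the original topology on each $G_n$: the composite $G_n\hookrightarrow P\to(\Gamma,\tau_q)$ (the second map being $\phi$) is continuous, so the subspace topology is no finer than the original, while $\sigma\le\tau_q$ together with the fact that $\sigma$ already induces the original topology forces it to be no coarser. Now invoke the hypothesis that the topology of $G$ is the \emph{finest} group topology inducing the given topologies on the $G_n$: a group topology with this property is dominated by $\sigma$, whence $\tau_q\le\sigma$ and $\sigma=\tau_q$. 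Thus $\Gamma$ is a topological quotient of $P$, and since a quotient of a group with a local $\w^\w$-base again has one, $\Gamma$ admits a local $\w^\w$-base.

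It remains to pass from the dense subgroup $\Gamma$ to $G$ itself. As $\Gamma$ is dense in $G$, the two groups share the same completion, $\widehat{\Gamma}=\widehat{G}$; since the class of groups with a local $\w^\w$-base is closed under two-sided completion, $\widehat{G}$ has a local $\w^\w$-base. Finally $G$, being Hausdorff, embeds as a subgroup of $\widehat{G}$, and the property is inherited by subgroups, so $G$ admits a local $\w^\w$-base. When the $G_n$ already generate $G$ algebraically one has $\Gamma=G$ and this last paragraph is unnecessary: $G=P/\ker\phi$ is directly a topological quotient of $P$.

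The main obstacle is the topological identification $\sigma=\tau_q$ of the second paragraph, i.e. verifying that restricting the finest topology of $G$ back down to the generated subgroup reproduces exactly the quotient topology coming from the free product. This is the precise analogue of the step in the proof of the preceding corollary where one argues that the canonical copy of $G$ inside $\widehat{H}$ ``cannot be finer than the free product topology,'' and it is the only place the finest-topology hypothesis is genuinely used. A secondary subtlety, absent in the free product case, is that here $\Gamma$ may be a proper dense subgroup of $G$, so the quotient identification must be carried out on $\Gamma$ — equivalently, one must know that the subspace topology on $\Gamma$ is itself the finest group topology inducing the original topologies on the $G_n$ — before completing.
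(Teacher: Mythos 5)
Your proposal is correct and is essentially the paper's own argument: the paper likewise sends the free product $\ast_{n}G_n$ onto the subgroup $\tilde G$ algebraically generated by the $G_n$, observes that this map is open (a quotient map) because the quotient topology induces the original topology on each $G_n$ and hence, by the finest-topology hypothesis, cannot be strictly finer than the topology inherited from $G$, then uses preservation of local $\w^\w$-bases under quotient homomorphisms, and finally passes from the dense generated subgroup to $G$ (your completion-plus-subgroup step is just one implementation of this last density step). The subtlety you isolate at the end --- that the finest-topology hypothesis is stated for topologies on $G$ while the quotient identification is carried out on the possibly proper dense subgroup $\Gamma$ --- is present, and passed over without comment, in the paper's proof as well, so your attempt is faithful to the paper's argument including that delicate point.
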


\begin{proof}
Denote $\tilde G$ a subgroup of $G$ algebraically generated by the $G_n$.
The canonical continuous surjective homomorphism $\ast_nG_n\to G$ is in fact an open map, because the quotient topology induces the original topology on each $G_n$ and hence must be coarser than the topology of $G$. And a local $\omega^{\omega}$ base is preserved by quotient homomorphisms. Finally, the existence of an $\omega^{\omega}$ base passes over to $G$.
\end{proof}

Already for a group like $U(\ell^2)\ast U(\ell^2)$ (where the unitary group is equipped with the standard strong operator topology) the above conclusion is new.

\subsection{Corollaries for free topological groups}
For a Tychonoff space $X$, the free topological group on $X$ contains $X$ as a topological subspace, is algebraically generated by $X$, and every continuous map from $X$ to a topological group lifts to a continuous homomorphism of $F(X)$, see \cite{Gra,Mar}. Similarly, one defines the free Abelian topological group $A(X)$.

It is sometimes better to work with more general notions defined for a uniform space $X$ instead of a topological one. The definition of the free topological group on a uniform space $X$ parallels that for a topological space, except that there are at least four standard uniformities on a topological group, so we need to pick up one; for Nummela's definition \cite{nummela}, one chooses the two-sided uniformity, which is the supremum of the left and right uniformities. Thus, $X$ is a uniform subspace of $F(X)$ in its two-sided uniformity, and every map from $X$ to a topological group $G$, which is uniformly continuous with regard to the two-sided uniformity on $G$, lifts to a continuous group homomorphism. 

\begin{remark}
{\em
It appears to us that the other two possible notions, using the left uniformity and the lower uniformity, have never been explored at any depth, although they may lead to interesting new examples; for some relevant work, see \cite{tkachenko}.
}
\end{remark}

\begin{corollary}
        Let $X$ be a separable uniform space admitting an $\w^\w$-base of entourages of the diagonal. Then the free topological group $F(X)$ admits a local $\w^\w$-base at identity. 
\end{corollary}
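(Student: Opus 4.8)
The plan is to reduce to the countable case by separability, apply Theorem \ref{th:gn}, and then transfer the conclusion to $F(X)$ through a completion and a subgroup. The starting observation is that, by the description of the free topological group on a uniform space obtained in \cite{P} from the Roelcke--Dierolf technique recalled above, the topology of $F(X)$ is the finest group topology making the filter $\mathcal F$ generated by the sets $\{x^{-1}y : (x,y)\in V\}$ (with $V$ ranging over entourages of $X$) converge to the identity. Since the correspondence $V\mapsto\{x^{-1}y:(x,y)\in V\}$ is monotone, an $\w^\w$-base of entourages yields an $\w^\w$-base for $\mathcal F$. The obstruction to invoking Theorem \ref{th:gn} directly is that $F(X)$ is uncountable as soon as $X$ is; this is exactly where separability enters.

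First I would fix a countable dense subset $D\subseteq X$ and regard it as a uniform subspace. Restricting an $\w^\w$-base of entourages of $X$ to $D\times D$ gives an $\w^\w$-base for the induced uniformity on $D$, so the free topological group $F(D)$ is a \emph{countable} group whose topology, again by \cite{P}, is the finest group topology making the single filter $\mathcal F_D$ (which has an $\w^\w$-base) converge to $e$. Theorem \ref{th:gn} then applies verbatim and shows that $F(D)$ admits a local $\w^\w$-base. Passing to the two-sided completion, which preserves the property of having a local $\w^\w$-base (as recalled in the Introduction), $\widehat{F(D)}$ admits a local $\w^\w$-base as well.

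It remains to locate $F(X)$ as a topological subgroup of $\widehat{F(D)}$. Since $D$ is a uniform subspace of $F(D)$ in the two-sided uniformity, its completion $\widehat D$ embeds as a uniform subspace of $\widehat{F(D)}$; and as $D$ is dense in $X$ we have $\widehat D=\widehat X\supseteq X$, so the inclusion $X\hookrightarrow\widehat{F(D)}$ is uniformly continuous for the two-sided uniformity. By the universal property of $F(X)$ it lifts to a continuous homomorphism $\psi\colon F(X)\to\widehat{F(D)}$, whose image $\tilde F$ is the subgroup generated by $X$. To see that $\psi$ is a topological isomorphism onto $\tilde F$, I would run the inclusion the other way: the uniformly continuous map $D\hookrightarrow X\hookrightarrow F(X)$ induces a continuous homomorphism $F(D)\to F(X)$ and hence $\widehat{F(D)}\to\widehat{F(X)}$, which carries $\tilde F$ continuously into $F(X)$ and is inverse to $\psi$ on generators. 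Both composites fix $X$ pointwise, so by the universal property they are the respective identities, and $\psi$ is a topological embedding. As the class of groups with a local $\w^\w$-base is closed under passing to subgroups, $F(X)\cong\tilde F\leq\widehat{F(D)}$ inherits a local $\w^\w$-base, as required.

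The step I expect to be the main obstacle is the last one: verifying that the canonical continuous monomorphism $\psi$ is a homeomorphism onto its image rather than merely continuous. The subspace topology $\tilde F$ inherits from $\widehat{F(D)}$ makes $X\hookrightarrow\tilde F$ uniformly continuous and is therefore no finer than the free topology; the reverse comparison is precisely what the round-trip homomorphism $\widehat{F(D)}\to\widehat{F(X)}$ is designed to supply, and one must check carefully that it does restrict to $\tilde F$ and invert $\psi$. One could instead quote the identification $\widehat{F(D)}\cong F(\widehat D)=F(\widehat X)=\widehat{F(X)}$ of completions of free topological groups on uniform spaces, which bypasses this comparison at the cost of a heavier external input.
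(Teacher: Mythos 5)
Your proposal has the same skeleton as the paper's proof: pass to a countable dense uniform subspace, recognize the topology of the countable free group as the finest group topology making a filter with an $\w^\w$-base converge to the identity, apply Theorem \ref{th:gn}, and carry the conclusion over to $F(X)$ by density. Where you genuinely differ is the transfer step. The paper disposes of it in one line by quoting Nummela's theorem (\cite[Theorem 4]{nummela}): for a dense uniform subspace $Y$ of $X$ the canonical monomorphism $F(Y)\to F(X)$ is a dense topological embedding, so $F(X)$ sits between the dense subgroup $F(Y)$ and the completion $\widehat{F(Y)}$, and inherits the local $\w^\w$-base. You instead reconstruct the embedding by hand: lift $X\hookrightarrow\widehat{D}\subseteq\widehat{F(D)}$ to $\psi\colon F(X)\to\widehat{F(D)}$ and invert it on its image using the extension $\widehat{F(D)}\to\widehat{F(X)}$ of the homomorphism induced by $D\hookrightarrow F(X)$. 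That round-trip argument is sound: both composites are continuous homomorphisms fixing $X$ (respectively, the generating set) pointwise, hence are identities by the universal property (respectively, by algebra), so $\psi$ is a topological isomorphism onto the subgroup generated by $X$, and closure of the class under completions and subgroups finishes the proof. In effect you have re-proved, self-containedly, exactly the instance of Nummela's theorem that the paper cites; the paper's route is shorter, yours avoids the external reference. (The shortcut you mention at the end is not quite right as stated: $F(\widehat D)$ need not be complete, so one only has $\widehat{F(D)}\cong\widehat{F(\widehat D)}$, and that identification is again Nummela's theorem.)

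One detail does need repair. You describe the topology of $F(D)$ as the finest group topology making the filter generated by the sets $\{x^{-1}y\colon(x,y)\in V\}$ converge to $e$. In Nummela's setting --- the one used in the statement and in your own universal-property arguments --- the embedding $D\to F(D)$ must be uniformly continuous for the \emph{two-sided} uniformity, and the correct filter is the symmetrized one, with base
\[
i(V)=\{x^{-1}y\colon(x,y)\in V\}\cup\{xy^{-1}\colon(x,y)\in V\},\qquad V\in\mathcal V,
\]
as in the paper. With only the left-hand words you would a priori be describing a finer topology (a ``free group over the left uniformity'', a notion the paper explicitly leaves unexplored), and a local $\w^\w$-base for a finer group topology does not pass to a coarser one, so Theorem \ref{th:gn} would then say nothing about $F(D)$ itself. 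The fix is immediate: $V\mapsto i(V)$ is monotone, so an $\w^\w$-base of entourages still yields an $\w^\w$-base of the symmetrized filter; alternatively, impose convergence of the filters of left words and of right words separately, since Theorem \ref{th:gn} allows countably many filters.
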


\begin{proof}
According to a result of Nummela \cite[Theorem 4]{nummela}, if $Y$ is a dense uniform subspace of a uniform subspace $X$, then the canonical continuous monomorphism of $F(Y)$ into $F(X)$ is a (dense) embedding of topological groups.
Consequently, it suffices to show that the topological group $F(Y)$ has a local $\w^\w$-base, because $F(X)$ contains an isomorphic dense copy of $F(Y)$.
 
        Denote $F(Y)$ by $H$. Define a convergent filter $\mathcal F$ on $H$ as follows.
				For every entourage $V$ from the uniform structure $\mathcal V$ on $Y$ associate the set
				\[i(V) = \{x^{-1}y\colon (x,y)\in V\}\cup \{xy^{-1}\colon (x,y)\in V\}.\]
				
It is now easy to see that the canonical embedding of $Y$ into $F(Y)$ equipped with some group topology is uniformly continuous in the corresponding two-sided uniformity if and only if the filter $(i(V))_{V\in {\mathcal V}}$ converges to identity. This was explored in \cite{P}.
Denote 	$\mathcal F = \{ i(V): V \in \mathcal V \}$. Clearly, this filter $\mathcal F$ admits an $\w^\w$-base if and only if so does uniform structure $\mathcal V$.
Since $H = F(Y)$ is countable, we conclude by Theorem \ref{th:gn}.
\end{proof}

It is easy to see that the free topological group on the Tychonoff space $X$ is the same as the free topological group on the uniform space $X$ equipped with the finest compatible uniformity \cite{nummela}. One therefore deduces:

\begin{corollary}
        Let $X$ be a separable topological space whose finest compatible uniformity admits an $\w^\w$-base. Then the free topological group $F(X)$ on $X$ admits a local $\w^\w$-base. \qed
\end{corollary}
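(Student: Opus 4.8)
The plan is to deduce this immediately from the preceding Corollary, the one treating separable uniform spaces, by passing from the topological space $X$ to its associated uniform space. First I would invoke the identification quoted just above the statement: by Nummela \cite{nummela}, the free topological group on the Tychonoff space $X$ coincides with the free topological group on $X$ regarded as a uniform space equipped with its finest compatible uniformity $\mathcal U$. Thus $F(X)$ in the topological sense and $F(X,\mathcal U)$ in the uniform sense are literally the same topological group, and it suffices to exhibit a local $\w^\w$-base for the latter.

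Next I would verify that $(X,\mathcal U)$ meets the two hypotheses of the preceding Corollary. The finest compatible uniformity $\mathcal U$ induces exactly the original topology of $X$, so $(X,\mathcal U)$ is separable precisely because $X$ is separable: separability depends only on the underlying topology. The second hypothesis, that $\mathcal U$ admits an $\w^\w$-base of entourages of the diagonal, is exactly our standing assumption on the finest compatible uniformity of $X$.

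With both hypotheses in place, I would simply apply the preceding Corollary to $(X,\mathcal U)$, concluding that $F(X,\mathcal U)$ admits a local $\w^\w$-base at identity. By the identification above this is the same group as $F(X)$, which finishes the argument.

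The main point worth flagging is that there is no genuine obstacle here; the statement is a direct specialization of the uniform-space version. The only ingredient that is not a matter of unwinding definitions is the identification of the topological and uniform free group constructions, and that is supplied by Nummela's theorem rather than established afresh. Everything else, in particular the transfer of separability and of the $\w^\w$-base condition, is immediate from the fact that the finest compatible uniformity and the original topology determine one another.
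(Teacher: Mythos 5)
Your proposal is correct and matches the paper's own reasoning exactly: the paper derives this corollary by invoking Nummela's identification of $F(X)$ with the free topological group on $X$ equipped with its finest compatible uniformity, and then applying the preceding corollary for separable uniform spaces. Your additional remarks (separability transfers since the uniformity is compatible; the $\w^\w$-base hypothesis is literally the assumption) are just the unwinding the paper leaves implicit.
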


The topological spaces whose finest uniformity consists of all neighbourhoods of the diagonal are known as strongly collectionwise normal spaces \cite{SS}. For instance, all paracompact spaces are such, but non-normal spaces are not. However, one can complement the above result as follows.

\begin{corollary}
   Let $X$ be a separable topological space whose neighbourhood system of the diagonal $\Delta_X$ in $X\times X$ admits
	an $\w^\w$-base.      
   Then the free topological group $F(X)$ on $X$ admits a local $\w^\w$-base.
\label{c:nbhd}
\end{corollary}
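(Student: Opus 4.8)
The plan is to exhibit the topology of $F(X)$ as the finest group topology making a single filter with an $\w^\w$-base converge to the identity, and then to transport the problem to a countable dense subgroup where Theorem~\ref{th:gn} can be applied directly.

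First I would identify the correct filter. For a neighbourhood $N$ of the diagonal $\Delta_X$ in $X\times X$ put $i(N)=\{x^{-1}y,\,y^{-1}x:(x,y)\in N\}$, and consider the filter $\mathcal F$ generated by $\{i(N):N\in\mathcal N_{\Delta_X}\}$, where $\mathcal N_{\Delta_X}$ is the neighbourhood filter of $\Delta_X$. I claim the topology of $F(X)$ is the finest group topology for which $\mathcal F\to e$. This I would obtain by a squeeze between two filters already known to generate the free group topology: the filter attached to the finest compatible uniformity $\mathcal U$ (since $F(X)=F(X,\mathcal U)$), and the pointwise filter expressing continuity of the inclusion $X\hookrightarrow F(X)$ at each point. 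Because, by the universal property of $\mathcal U$, a map out of $X$ is continuous exactly when it is uniformly continuous for $\mathcal U$, both extreme filters yield the same finest group topology, and $\mathcal F$ sits between them, so it yields that topology as well. By hypothesis $\mathcal N_{\Delta_X}$ has an $\w^\w$-base, hence so does $\mathcal F$; via the Roelcke--Dierolf description and Theorem~\ref{th:gn} this would already settle the matter if $F(X)$ were countable.

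To reach countability I would invoke separability. Fix a countable dense $Y\subseteq X$ and let $H=\langle Y\rangle$, a countable subgroup which is dense in $F(X)$. As the class of groups with a local $\w^\w$-base is closed under two-sided completions and under subgroups, and $\widehat H=\widehat{F(X)}\supseteq F(X)$, it is enough to give $H$, with the induced topology, a local $\w^\w$-base. By Nummela's embedding theorem the induced topology on $H$ coincides with that of the free topological group on the dense uniform subspace $(Y,\mathcal U|_Y)$; this topology is the finest group topology making $\{i(V):V\in\mathcal U|_Y\}$ converge to $e$, so by Theorem~\ref{th:gn} (now applicable, $H$ being countable) it only remains to verify that the trace uniformity $\mathcal U|_Y$ has an $\w^\w$-base.

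This last verification is where I expect the real difficulty. One has $\mathcal U|_Y\subseteq\mathcal N_{\Delta_X}|_Y$, and the restricted family $\{N_f\cap(Y\times Y):f\in\w^\w\}$ of the given $\w^\w$-base is monotone and cofinal: every entourage of $\mathcal U|_Y$, being (the trace of) a neighbourhood of $\Delta_X$, contains some $N_f\cap(Y\times Y)$. The obstacle is that this only exhibits the traces as cofinal \emph{from above}; to have an honest base of $\mathcal U|_Y$ I must produce, for each $f$, a genuine uniform entourage $E\in\mathcal U$ with $E\cap(Y\times Y)\subseteq N_f$. On $X$ itself such an approximation of a diagonal neighbourhood from inside by entourages fails precisely when $X$ is not strongly collectionwise normal; the point of the argument is that it can nonetheless be carried out after restricting to the countable set $Y$, exploiting that $Y$ is countable, hence paracompact, so that its own diagonal neighbourhoods are uniform, together with the $\w^\w$-indexing of $\mathcal N_{\Delta_X}$. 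Establishing this entourage approximation on $Y$ is the crux; once it is in hand, $\mathcal U|_Y$ has an $\w^\w$-base, Theorem~\ref{th:gn} furnishes a local $\w^\w$-base on $H$, and the completion--subgroup closure transfers it to $F(X)$.
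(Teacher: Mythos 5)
Your skeleton agrees with the paper's: pass to a countable dense $Y\subseteq X$, work with the countable group $H=F(Y)$, use Nummela's theorem to relate $H$ to a dense subgroup of $F(X)$, apply Theorem \ref{th:gn}, and transfer the $\w^\w$-base back to $F(X)$ through completions and subgroups. Even your opening observation (the free group topology is the finest group topology making the filter of images of diagonal neighbourhoods converge) is correct and is the germ of the right argument. But the step you yourself call ``the crux'' is a genuine gap, and the hint you give for it does not work. You need, for each member $N_f$ of the given $\w^\w$-base of neighbourhoods of $\Delta_X$, an entourage $E$ of the \emph{fine uniformity ${\mathcal U}_X$ of $X$} with $E\cap(Y\times Y)\subseteq N_f$; equivalently, that the filters ${\mathcal N}:={\mathcal N}_{\Delta_X}\vert_{Y\times Y}$ and ${\mathcal U}_X\vert_Y$ are mutually cofinal. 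Your justification --- $Y$ is countable, hence paracompact, so its diagonal neighbourhoods are uniform --- only places the traces inside the fine uniformity ${\mathcal U}_Y$ \emph{of $Y$}; what is needed is containment in the (possibly much coarser) trace uniformity ${\mathcal U}_X\vert_Y$, whose members must be restrictions of entourages defined on all of $X$. Nothing in your argument produces such extensions: this is exactly the ``approximation from inside by entourages'' that fails on non strongly collectionwise normal spaces, and restricting to a countable dense set does not by itself restore it. Even the weaker statement you actually need --- that ${\mathcal U}_X\vert_Y$ has an $\w^\w$-base --- is, as far as I can see, only known \emph{a posteriori} as a consequence of the corollary itself (via the two-sided uniformity of $F(X)$), so your plan is circular as it stands.

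The paper avoids this issue by feeding Theorem \ref{th:gn} a different filter: not $i({\mathcal U}_X\vert_Y)$ but $i({\mathcal N})$, which trivially inherits an $\w^\w$-base from the hypothesis by restricting the given base to $Y\times Y$. Hence the finest group topology $T$ on $F(Y)$ making $i({\mathcal N})$ converge has a local $\w^\w$-base with no further work, and all the effort goes into identifying $T$ with the topology of $F(Y,{\mathcal U}_X\vert_Y)$, i.e.\ (by Nummela) with the subspace topology from $F(X)$. One inclusion is formal, since ${\mathcal U}_X\vert_Y\subseteq{\mathcal N}$; for the other, the paper considers the two-sided uniformity ${\mathcal V}$ of $(F(Y),T)$ restricted to $Y$: every member of ${\mathcal V}$ contains a member of ${\mathcal N}$, so its closure in $X\times X$ is a neighbourhood of $\Delta_X$, and Lemma \ref{l:denses} then shows that ${\mathcal V}$ is the restriction of a compatible uniformity on $X$, hence coarser than ${\mathcal U}_X\vert_Y$; the universal property of the free group on $(Y,{\mathcal U}_X\vert_Y)$ supplies the missing continuity. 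The key point, which is exactly what your approach lacks, is that the group uniformity ${\mathcal V}$ automatically has the square-root property that the diagonal filter ${\mathcal N}$ does not, and this is what makes the extension Lemma \ref{l:denses} applicable. So to repair your proof, keep your paragraph-one idea at the level of $Y$ --- run Theorem \ref{th:gn} on $i({\mathcal N})$ --- and replace the unproven entourage-approximation claim by the identification of topologies via Lemma \ref{l:denses}.
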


In order to prove Corollary 3.12 we need a result of uniform topology.

\begin{lemma}
Let $Y$ be a dense subspace of a topological space $X$, and let $\mathcal V$ be a compatible uniformity on $Y$. Suppose that for every $V\in\mathcal V$ the closure ${\mathrm{cl}}_{X\times X}(V)$ is a neighbourhood of the diagonal in $X$. Then the uniformity $\mathcal V$ is a restriction of some compatible uniformity from $X$. (Here we understand compatibility in a weak sense: every set of the form $V[x]$ is a neighbourhood of $x$.)
\label{l:denses}
\end{lemma}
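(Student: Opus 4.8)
The plan is to build the required uniformity on $X$ explicitly, taking as entourages the closures of the entourages of $Y$. Write $\bar V=\mathrm{cl}_{X\times X}(V)$, and let $\mathcal W$ be the filter on $X\times X$ generated by the base
\[
\mathcal B_X=\{\,\bar V : V\in\mathcal V,\ V=V^{-1}\,\}
\]
(we may restrict to symmetric $V$, since $\mathcal V$ has a base of symmetric entourages). The goal is then to verify that $\mathcal W$ is a uniformity, that it is weakly compatible, and that its trace on $Y$ is exactly $\mathcal V$. The easy axioms come for free: $\mathcal B_X$ is a filter base because $\overline{V_1\cap V_2}\subseteq\bar{V_1}\cap\bar{V_2}$ and $V_1\cap V_2\in\mathcal V$; each $\bar V$ contains $\Delta_X$ (it is a neighbourhood of $\Delta_X$ by hypothesis); and each $\bar V$ is symmetric because the flip $(x,y)\mapsto(y,x)$ is a homeomorphism of $X\times X$ sending $V$ to $V^{-1}=V$.

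First I would isolate two standing facts. (i) If $N\subseteq X\times X$ is a neighbourhood of $\Delta_X$, then for every $w\in X$ there is an open $O_w\ni w$ with $O_w\times O_w\subseteq N$: the point $(w,w)$ lies in $\mathrm{Int}\,N$, which contains a basic open box $U_1\times U_2$, and one takes $O_w=U_1\cap U_2$. (ii) In $(Y,\mathcal V)$, for symmetric $V$ one has $\mathrm{cl}_{Y\times Y}(V)\subseteq V\circ V\circ V$ (apply compatibility to the neighbourhood $V[b]\times V[c]$ of a point $(b,c)$ in the closure); combined with the subspace identity $\bar V\cap(Y\times Y)=\mathrm{cl}_{Y\times Y}(V)$, this yields $V\subseteq\bar V\cap(Y\times Y)\subseteq V\circ V\circ V$.

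The heart of the matter, and the step I expect to be the \emph{main obstacle}, is the composition axiom: given $V\in\mathcal V$, produce $W\in\mathcal V$ with $\bar W\circ\bar W\subseteq\bar V$. I would choose symmetric $W\in\mathcal V$ with $W\circ W\circ W\circ W\circ W\subseteq V$. Take $(x,z)\in\bar W\circ\bar W$, say via $w$ with $(x,w),(w,z)\in\bar W$, and fix arbitrary open $O_x\ni x$, $O_z\ni z$; it suffices to exhibit a point of $V$ in $O_x\times O_z$. Using (i), pick open $O_w\ni w$ with $O_w\times O_w\subseteq\bar W$. Since $(x,w)\in\bar W$ and $(w,z)\in\bar W$, the boxes $O_x\times O_w$ and $O_w\times O_z$ each meet $W$; choose $(a,b)\in W$ with $a\in O_x$, $b\in O_w$ and $(c,d)\in W$ with $c\in O_w$, $d\in O_z$, all four points lying in $Y$. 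Now $b,c\in O_w\cap Y$ gives $(b,c)\in\bar W\cap(Y\times Y)\subseteq W\circ W\circ W$ by (ii), and hence $(a,d)\in W\circ W\circ W\circ W\circ W\subseteq V$ with $a\in O_x$, $d\in O_z$. Thus every box around $(x,z)$ meets $V$, i.e.\ $(x,z)\in\bar V$, which is the desired inclusion.

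Finally I would dispatch the remaining verifications, all routine given the facts above. The trace of $\mathcal W$ on $Y$ has base $\{\bar V\cap(Y\times Y)\}$, and the sandwich $V\subseteq\bar V\cap(Y\times Y)\subseteq V\circ V\circ V$ from (ii) shows this generates precisely $\mathcal V$. For weak compatibility, each $\bar V[x]$ is a neighbourhood of $x$ in $X$: by (i) there is an open $O_x\ni x$ with $O_x\times O_x\subseteq\bar V$, so $O_x\subseteq\bar V[x]$. This exhibits $\mathcal W$ as a uniformity on $X$ that restricts to $\mathcal V$ on $Y$ and is compatible in the stated weak sense, completing the proof.
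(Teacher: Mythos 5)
Your proof is correct and follows essentially the same route as the paper: the new uniformity is generated by the closures $\mathrm{cl}_{X\times X}(V)$, and the key composition axiom is verified by combining the hypothesis that $\mathrm{cl}_{X\times X}(W)$ is a neighbourhood of the diagonal at the middle point with the bound $\mathrm{cl}_{Y\times Y}(W)\subseteq W\circ W\circ W$ (the paper phrases this step with converging nets where you use open boxes, a purely cosmetic difference). If anything, your constants are the more careful ones: the paper asserts $\mathrm{cl}_{Y\times Y}(W)\subseteq W^2$ and takes $W^4\subseteq V$, whereas $W^3$ and $W^5$, as in your write-up, are what the argument actually requires.
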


\begin{proof}
The sets ${\mathrm{cl}}_{X\times X}(V)$, $V\in {\mathcal V}$ contain the diagonal of $X$ and form a filter base. The family of such sets is closed under the flip $(x,y)\mapsto (y,x)$.
To prove that they form a base for a uniformity on $X$, it remains to show that for every $V\in {\mathcal V}$ there is $W$ with ${\mathrm{cl}}_{X\times X}(W)^2\subseteq {\mathrm{cl}}_{X\times X}(V)$. 

Choose a symmetric $W$ with $W^4\subseteq V$, and let $x,y,z\in X$ and $(x,y),(y,z)\in {\mathrm{cl}}_{X\times X}(W)$. Select converging nets of elements of $Y$, $x_\alpha\to x$, $y_\alpha,y^\prime_\alpha\to y$, $z_\alpha\to z$ so that for all $\alpha$, $(x_\alpha, y_\alpha)\in W$, $(y^\prime_\alpha,z_\alpha)\in W$. Eventually one has $(y_\alpha,y^\prime_\alpha)\in {\mathrm{cl}}_{X\times X}(W)$ (as by hypothesis the former set is a neighbourhood of $(y,y)$ in $X\times X$). Therefore, eventually, $(y_\alpha,y^\prime_\alpha)\in {\mathrm{cl}}_{Y\times Y}(W)\subseteq W^2$. We conclude: $(x_\alpha,z_\alpha)\in W^4$ and $(x,z)\in {\mathrm{cl}}\,(W^4)\subseteq {\mathrm{cl}}\,(V)$. 

The uniformity on $X$ so defined is compatible because for every $x\in X$ the set ${\mathrm{cl}}_{X\times X}(V)[x]$ is a neighbourhood of $x$. The restriction of this uniformity to $Y$ is $\mathcal V$.
%
\end{proof}

\begin{proof}[Proof of Corollary \ref{c:nbhd}]
Select a countable dense subset $Y$ of $X$, and denote $F(Y)$ the free subgroup generated by $Y$. Let $\mathcal N$ denote the restriction of the neighbourhood system of the diagonal of $X$ to $Y\times Y$. Thus, $\mathcal N$ is coarser than the filter of the neighbourhoods of the diagonal in $Y$ (since $Y$ is countable, it is the universal uniform structure on $Y$), yet finer than the restriction of the universal uniformity, ${\mathcal U}_X$, of $X$ to $Y$. Notice that the closure in $X\times X$ of every element of $\mathcal N$ is a neighbourhood of the diagonal in $X\times X$.

        Consider the image, $\mathcal F$, of $\mathcal N$ under the map $i$ sending each $V$ to the collection of all words $x^{-1}y$ and $xy^{-1}$, whenever $(x,y)\in V$. 

        Let us put on $F(Y)$ the finest group topology making $\mathcal F$ converge to the identity. This topology is apriori finer than the topology of the free topological group on the uniform space $(Y,{\mathcal U}_X\vert_Y)$, which is, according to Nummela's theorem, the restriction of the topology from the free topological group $F(X)$. 

Denote $\mathcal V$ the restriction to $Y$ of two-sided uniformity on $F(Y)$ equipped with the above group topology. Since $\mathcal N\to e$, every element $V\in\mathcal V$ contains an element of $\mathcal N$, and so ${\mathrm{cl}}_{X\times X}(V)$ is a neighbourhood of the diagonal in $X$. 
Using Lemma \ref{l:denses}, we conclude that $\mathcal V$ is the restriction of the finest uniformity from $X$. Thus, $F(Y)$ is the free topological group $F(Y,{\mathcal U}_X\vert_Y)$, which is a dense topological subgroup of $F(X)$. Since $F(Y)$ has a local $\w^\w$-base, so does $F(X)$.
\end{proof}

According to \cite{Ginsburg}, a Tychonoff space $X$ has a diagonal of countable character if and only if 
$X$ is metrizable and the set $X^{\prime}$ of non-isolated points of $X$ is compact. 
 
\begin{proposition}\label{sigma-compact}
Let X be a metrizable space such that the set $X^{\prime}$ of all non-isolated points in $X$ is $\sigma$-compact, i.e.
$X^{\prime}$ is a countable union of compacts. Then the finest uniformity of the space $X$ has an $\w^\w$-base of entourages.
\end{proposition}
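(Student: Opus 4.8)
The plan is to work with the description of the finest uniformity $\mathcal U_X$ of the (metrizable, hence paracompact) space $X$ by \emph{open covers}. Writing $V_{\mathcal V}=\bigcup_{U\in\mathcal V}U\times U$ for an open cover $\mathcal V$, I would use the two standard facts that every $V_{\mathcal V}$ is an entourage of $\mathcal U_X$ (open covers of a paracompact space are uniform covers), and conversely that every entourage $E$ of $\mathcal U_X$ contains some $V_{\mathcal V}$: given a continuous pseudometric $\rho$ and $\e>0$ with $\{\rho<\e\}\subseteq E$, take the cover by the $\rho$-balls of radius $\e/2$. Consequently it suffices to produce a monotone map $\phi\mapsto\mathcal V_\phi$ from $\w^\w$ into the open covers of $X$ such that every open cover $\mathcal W$ is refined by some $\mathcal V_\phi$; since refinement $\mathcal V_\phi\prec\mathcal W$ gives $V_{\mathcal V_\phi}\subseteq V_{\mathcal W}$, the assignment $\phi\mapsto V_{\mathcal V_\phi}$ is then the desired monotone cofinal map, i.e.\ a local $\w^\w$-base.

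Fix a compatible metric $d$ and a compact exhaustion $X'=\bigcup_n K_n$, which one may assume increasing with union $X'$. For an isolated point $x$ set the isolation radius $\iota(x)=d(x,X\setminus\{x\})>0$, and for $x\in X'$ set the level $\lambda(x)=\min\{n:x\in K_n\}$. The crucial (and, to me, counterintuitive) move is to index covers by \emph{discontinuous} radius functions rather than by continuous gauges $r\colon X\to(0,\infty)$. For $\phi\in\w^\w$ let $\mathcal V_\phi$ be the ball cover $\{B_d(x,r_\phi(x)):x\in X\}$, where $r_\phi(x)=\tfrac12\iota(x)$ if $x$ is isolated and $r_\phi(x)=2^{-\phi(\lambda(x))}$ if $x\in X'$. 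Each $\mathcal V_\phi$ is an open cover of $X$, and at an isolated $x$ the ball degenerates to the singleton $\{x\}$, so isolated points are automatically fully resolved by every $\mathcal V_\phi$. Monotonicity is immediate: if $\phi\le\psi$ then $r_\psi\le r_\phi$ pointwise (the two functions agree on isolated points), whence $\mathcal V_\psi$ refines $\mathcal V_\phi$ and $V_{\mathcal V_\psi}\subseteq V_{\mathcal V_\phi}$.

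For cofinality, given an open cover $\mathcal W$, I would invoke the Lebesgue covering lemma on each compact $K_n$: there is $\delta_n>0$ such that $B_d(y,\delta_n)$ lies in a member of $\mathcal W$ for every $y\in K_n$. Choosing $\phi(n)$ with $2^{-\phi(n)}\le\delta_n$ then gives, for each $x\in X'$ with $\lambda(x)=n$, that $B_d(x,2^{-\phi(n)})\subseteq B_d(x,\delta_n)$ lies inside a member of $\mathcal W$; the singleton balls at isolated points lie inside any member of $\mathcal W$ containing them. Hence $\mathcal V_\phi$ refines $\mathcal W$, and the construction is complete.

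I expect the crux to be conceptual rather than computational: the correct choice of what to feed into $\w^\w$. The tempting route is to encode an entourage as a single continuous positive radius function, but near a non–locally-compact part of $X'$ (for instance when $X'$ is homeomorphic to $\Q$) no continuous positive gauge can simultaneously stay below an arbitrary target on $X'$ and collapse the isolated points clustering toward $X'$ — continuity would force the gauge to vanish on $X'$. This is exactly the obstruction that derails a naive ``annular'' construction. Passing to open \emph{covers}, whose radii need not vary continuously, removes the obstruction and isolates the genuine use of the hypothesis: $\sigma$-compactness of $X'$ is precisely what compresses the Lebesgue-number data of an arbitrary cover into a single sequence $(\delta_n)_{n\in\omega}$, which is what an element of $\w^\w$ can record, while the collapsing of the (possibly uncountably many) isolated points to singletons ensures they contribute no further index.
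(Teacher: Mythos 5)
Your proof is correct and follows essentially the same route as the paper: both reduce, via paracompactness, the finest uniformity to a simpler family (you use open covers and the entourages $\bigcup_{U\in\mathcal V} U\times U$; the paper uses neighbourhoods of the diagonal in $X\times X$), and both then index by assigning the radius $2^{-\phi(n)}$ to the $n$-th compact piece of $X'$ while letting the isolated points resolve themselves (your singleton balls, the paper's inclusion of $\Delta_X$ itself in each basic entourage). Your Lebesgue-number step is the same compactness fact the paper invokes, namely that the $\e$-neighbourhoods of a compact set in a metric space form a neighbourhood base of that compact.
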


\begin{proof}
        Since $X$ is paracompact, it is enough to prove that the neighbourhood system of the diagonal in $X\times X$ admits a local $\w^\w$-base of neighbourhoods. Fix a metric $d$ generating the topology of the metrizable space $X\times X$. 
        Let $X^{\prime}=\bigcup_{n\in\omega}K_n$, where $K_n$ are compact. Denote by $\tilde K_n$ the compact image of $K_n$ under the diagonal map  
        $\mathrm{Id}_{X}{\Delta}\mathrm{Id}_{X}$.  For each $\alpha\in \omega^\omega$, define
        \[U_{\alpha} = \bigcup_{n\in\omega} (\tilde K_n)_{2^{-\alpha(n)}},\]
        where $A_{\e}$ is the open $\e$-neighbourhood of a set $A$ under the metric $d$ formed in $X\times X$.
Define the map $f\colon \alpha \mapsto V_{\alpha}$ from $\omega^\omega$ to the family of open neighbourhoods of the diagonal 
$\Delta_X$ in the square $X \times X$ as follows:
   \[f(\alpha) = U_\alpha  \bigcup \Delta_X .\]

The map $f$ is obviously monotone. To show that $f$ is cofinal, just remember an easily verifiable and well-known fact that the $\e$-neighbourhoods of a compact subset of a metric space form a neighbourhood base of this compact.
Thus, the family
$\mathcal{V} =\{ V_\alpha = U_\alpha  \bigcup \Delta_X \}_{\alpha\in \omega^\omega}$ is an $\w^\w$-base of the diagonal $\Delta_X$ in the square $X \times X$.       
\end{proof}

\begin{conjecture}\label{Conj_1}
 Conversely, if the finest uniformity of a metrizable space $X$ has an $\w^\w$-base of entourages, then the set $X^{\prime}$ of all non-isolated points in $X$ is $\sigma$-compact.
\footnote{Very recently Taras Banakh resolved positively Conjecture 3.15.}
\end{conjecture}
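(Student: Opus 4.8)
The plan is to establish the converse of Proposition~\ref{sigma-compact} by converting the hypothesis into a statement about monotone families of positive functions on $X$, and then, assuming $X'$ is not $\sigma$-compact, to defeat any such family by a single neighbourhood of the diagonal built through a tree-fusion driven by the failure of compactness.

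\emph{Reformulation.} Since $X$ is metrizable it is paracompact, hence strongly collectionwise normal, so by the discussion preceding Corollary~\ref{c:nbhd} its finest uniformity is exactly the system $\mathcal N(\Delta_X)$ of all neighbourhoods of the diagonal in $X\times X$. Fix a metric $d\le 1$. For a positive lower semicontinuous $\varepsilon\colon X\to(0,1]$ put $[\varepsilon]=\{(x,y):d(x,y)<\min(\varepsilon(x),\varepsilon(y))\}$; each $[\varepsilon]$ is an open neighbourhood of $\Delta_X$, and assigning to an open $U\supseteq\Delta_X$ the function $\varepsilon_U(x)=\tfrac12\sup\{r\le 1:B(x,r)\times B(x,r)\subseteq U\}$ shows the sets $[\varepsilon]$ are cofinal in $\mathcal N(\Delta_X)$ and that $\varepsilon_U$ is monotone in $U$. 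Hence the hypothesis yields a family $\{\varepsilon_\alpha:\alpha\in\w^\w\}$ of positive lower semicontinuous functions, monotone in the sense $\alpha\le\beta\Rightarrow\varepsilon_\alpha\ge\varepsilon_\beta$, whose neighbourhoods $[\varepsilon_\alpha]$ form a base at $\Delta_X$. The goal is that the closed set $X'$ be $\sigma$-compact; the countable case is trivial, so only uncountable $X'$ is at issue.

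\emph{Strategy.} I would argue by contradiction: assuming $X'$ is not $\sigma$-compact, I build a single positive lower semicontinuous $\delta\colon X\to(0,1]$ with $[\varepsilon_\alpha]\not\subseteq[\delta]$ for \emph{every} $\alpha$, contradicting that the $[\varepsilon_\alpha]$ form a base. The defeat is local: if $x\in X'$ and $\varepsilon_\alpha(x)>\delta(x)$ then, $x$ being non-isolated, one finds $y$ with $\delta(x)\le d(x,y)<\varepsilon_\alpha(x)$, so $(x,y)\in[\varepsilon_\alpha]\setminus[\delta]$. Thus it suffices to arrange, for each $\alpha$, a point $x$ with $\varepsilon_\alpha(x)>\delta(x)$. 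Monotonicity gives economy: if $\alpha\le\beta$ and $\varepsilon_\beta(x)>\delta(x)$ then also $\varepsilon_\alpha(x)>\delta(x)$, so it is enough to defeat a cofinal family of indices. The difficulty is that a cofinal family in $\w^\w$ is uncountable and cannot be enumerated; instead the defeats must be organized along the tree $\w^{<\w}$, exploiting that each $\beta$ is read off coordinate by coordinate and that, by monotonicity, the behaviour of $\varepsilon_\beta$ ``at depth $k$'' is governed by $\beta(k)$.

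\emph{The fusion and the main obstacle.} The heart of the proof is this tree-fusion: I would construct points $x_s\in X'$ indexed by $s\in\w^{<\w}$ so that along every branch $\beta\in\w^\w$ some node $x_{\beta|k}$ witnesses $\varepsilon_\beta(x_{\beta|k})>\delta(x_{\beta|k})$, where $\delta$ is obtained by digging a small hole at each $x_s$. At a node $s$ one needs infinitely many children $x_{s^\frown i}$, $i\in\w$, spread out enough that the holes do not interfere and $\delta$ remains positive and lower semicontinuous; such an infinite spread is available precisely because the relevant closed level set $\{x\in X':\varepsilon_\cdot(x)\ge 2^{-m}\}$ fails to be compact. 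Non-compactness of a closed subset of a metric space occurs in two modes --- failure of total boundedness, yielding an infinite $\rho$-separated set, and failure of completeness, yielding a Cauchy sequence escaping the set --- and I would feed both into the fusion as sources of children. The assumption that $X'$ is not $\sigma$-compact is exactly what keeps the tree from terminating along any branch, since a branch running into a compact set would let one collect countably many compacta covering $X'$. Keeping $\delta$ a genuine lower semicontinuous, everywhere positive function while meeting every branch is the \textbf{main obstacle}: in the separation mode the children have no accumulation and the holes are harmless, but in the completeness mode one must select the escaping Cauchy sequences so that their limits fall \emph{outside} $X'$ (as happens, for instance, for the irrationals, whose missing limits are rational), for otherwise the downward spikes of $\delta$ would collide at a genuine point and destroy either positivity or lower semicontinuity. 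I expect this selection, together with the bookkeeping of hole depths down the tree, to be the only genuinely hard step.

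I would stress one structural point that forces this hands-on approach: because $X$ is an arbitrary metrizable space, possibly non-separable and non-analytic, the Hurewicz dichotomy (non-$\sigma$-compact $\Rightarrow$ a closed copy of the irrationals) is unavailable, so the failure of $\sigma$-compactness must be exploited directly through the total-boundedness/completeness dichotomy above rather than through a single canonical subspace. Equivalently, the positive form of the conclusion is that some index $\alpha^\ast$ has all level sets $\{x\in X':\varepsilon_{\alpha^\ast}(x)\ge 2^{-m}\}$ compact, whence $X'=\bigcup_m\{x\in X':\varepsilon_{\alpha^\ast}(x)\ge 2^{-m}\}$ is $\sigma$-compact; the fusion is precisely the proof that the nonexistence of such an $\alpha^\ast$ contradicts the base property.
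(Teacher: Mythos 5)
First, a point of order: the paper contains \emph{no proof} of this statement. It is stated as an open conjecture (Conjecture~\ref{Conj_1}), and the footnote records that it was resolved only later, by Banakh, in separate and substantially longer work. So there is no proof in the paper to compare yours against; your attempt has to stand on its own, and as it stands it is a strategy outline whose central step is missing.

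Your reformulation is sound: reducing the finest uniformity to the system of neighbourhoods of the diagonal, passing to a monotone family $\{\varepsilon_\alpha\}_{\alpha\in\w^\w}$ of positive (even Lipschitz) functions with $\{[\varepsilon_\alpha]\}$ a base, and observing that it suffices either to find $\alpha^\ast$ with all level sets $\{x\in X':\varepsilon_{\alpha^\ast}(x)\ge 2^{-m}\}$ compact, or to produce a single entourage $[\delta]$ containing no $[\varepsilon_\alpha]$. The gap is in the tree fusion that is supposed to produce $\delta$, and specifically in the one mechanism you offer for making countably many choices defeat an uncountable cofinal family: the assertion that ``by monotonicity, the behaviour of $\varepsilon_\beta$ at depth $k$ is governed by $\beta(k)$.'' Monotonicity says only that $\alpha\le\beta$ implies $\varepsilon_\alpha\ge\varepsilon_\beta$. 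Since every cone $\{\beta:\beta\restriction k=s\}$ has a \emph{least} element $s^\frown(0,0,\dots)$, finite initial segments yield exactly one thing: the upper bound $\varepsilon_\beta\le\varepsilon_{s^\frown(0,0,\dots)}$ on the whole cone. But defeating $\beta$ at a pre-chosen node $x_s$ requires a \emph{lower} bound on $\varepsilon_\beta(x_s)$ (it must exceed the hole depth), and no lower bound follows from $\beta\restriction k$: two branches agreeing to depth $k$ can have $\varepsilon$'s differing arbitrarily below the top element of the cone. So nothing chosen at the node $s$ can be guaranteed to defeat even one branch through $s$, let alone all of them. This is precisely the point where the uncountability of the index set must be overcome, and the proposal contains no device for it.

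There is also a structural tension your ``main obstacle'' paragraph gestures at but does not resolve, and it is not confined to the Cauchy mode. To force a contradiction along a branch $\beta$ that escapes all defeats, the natural move is to have the resulting upper bounds $\varepsilon_\beta(x_{\beta\restriction k})\le 3d_{\beta\restriction k}$ occur at points accumulating somewhere in $X$ with depths tending to $0$, forcing $\varepsilon_\beta$ to vanish at the limit; but positivity and lower semicontinuity of $\delta$ require exactly the opposite, that the hole points \emph{not} accumulate in $X$. Separately, a smaller flaw: the local defeat ``find $y$ with $\delta(x)\le d(x,y)<\varepsilon_\alpha(x)$'' is not available, since non-isolatedness only provides points at arbitrarily small distance from $x$, not points at distance at least $\delta(x)$; one must instead fix witness pairs $(x_s,y_s)$ in advance with $\delta(x_s)\le d(x_s,y_s)$, after which the defeat condition becomes $d(x_s,y_s)<\min\bigl(\varepsilon_\alpha(x_s),\varepsilon_\alpha(y_s)\bigr)$ --- again a lower-bound requirement on $\varepsilon_\alpha$ that nothing in your construction secures. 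In short, the reduction and the statement of the obstacle are correct, but the combinatorial core --- beating every $\alpha\in\w^\w$ with countably many pre-committed holes --- is exactly the content of the theorem, and it is absent; its difficulty is why this remained a conjecture in the paper and required Banakh's later work to settle.
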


  \subsection{The case where left and right uniformities coincide}
  In the case where $G$ is a SIN group (that is, the left and the right uniformities of $G$ coincide), the separability condition can be dropped. In this case, the following more or less obvious observation holds.
  
  \begin{lemma}
          Let $G$ be a group, and $\mathcal F$ be a filter of subsets of $G$. A base at identity for the finest SIN group topology in which ${\mathcal F}\to e$ is formed by all sets of the form
          \[\sym{\left(\bigcup_{g\in G} g^{-1} V_n g\right)_{n}},\]
          where $\{V_n\in {\mathcal F}:~~ n\in \omega\}$ is an arbitrary sequence of elements of the filter. \qed
  \end{lemma}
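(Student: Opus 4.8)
The plan is to run the Roelcke--Dierolf argument reproduced above, specialised to the SIN situation; the one genuine change is that the freedom to let the maps $\Phi_n$ depend on the point $g$ is no longer needed, since here one may use constant maps and let the conjugation-saturation absorb the conjugations. To begin, I would observe that each proposed generating set is $\mathcal V_{\Phi_n}$ for the constant map $\Phi_n\equiv V_n$: reading $\bigcup_{g\in G}g^{-1}V_ng$ as the set \eqref{eq:union} (so that $V_n^{-1}$ is incorporated and the set is symmetric), the proposed basic sets are exactly $\sym{\mathcal V_{\Phi_n}}_{n=1}^{\infty}$ for the constant sequences $(\Phi_n)$. Hence the three symmetric-product lemmas recorded above apply without change. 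Symmetry of each basic set is the first lemma; the filter-base property follows from the closure of $\mathcal F$ under finite intersections together with the evident monotonicity of $\sym{\cdot}$ in its arguments (given $(V_n)$ and $(V_n')$, the sequence $(V_n\cap V_n')$ yields a smaller basic set); and the shrinking axiom $(B')^2\subseteq B$ is supplied by the second lemma, $\sym{\mathcal V_{\Phi_{2n}}}^2\subseteq\sym{\mathcal V_{\Phi_n}}$, after passing to a decreasing sequence (which may be assumed by replacing $V_n$ with $V_1\cap\cdots\cap V_n\in\mathcal F$). Thus the proposed sets form a neighbourhood base at identity of a group topology, call it $\tau$.

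Next I would verify the two defining properties of $\tau$. For the SIN property, note that for a constant map $\Phi_n\equiv V_n$ one has $\Phi_n^{h}=\Phi_n$ for every $h$, so the conjugation lemma above gives $h^{-1}\sym{\mathcal V_{\Phi_n}}h\subseteq\sym{\mathcal V_{\Phi_n}}$; applying the same inclusion with $h^{-1}$ in place of $h$ yields the reverse containment, so each basic set is conjugation-invariant and $\tau$ is SIN. That $\mathcal F\to e$ is immediate, because every basic set contains $\bigcup_{g}g^{-1}V_1g\supseteq V_1\in\mathcal F$, so the neighbourhood filter of $e$ refines $\mathcal F$.

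The substance of the lemma is that $\tau$ is the \emph{finest} SIN group topology making $\mathcal F$ converge, and this is the only place the hypotheses are really used. Let $\tau'$ be an arbitrary SIN group topology with $\mathcal F\to e$, and let $V$ be a $\tau'$-neighbourhood of $e$. Since $\tau'$ is SIN, the symmetric conjugation-invariant $\tau'$-neighbourhoods form a base, and a Birkhoff--Kakutani construction inside this base produces a chain $V=V_0\supseteq V_1\supseteq V_2\supseteq\cdots$ of symmetric, conjugation-invariant $\tau'$-neighbourhoods with $V_{n+1}^2\subseteq V_n$. As $\mathcal F\to e$ in $\tau'$, I may pick $F_n\in\mathcal F$ with $F_n\subseteq V_n$; the conjugation-invariance of $V_n$ then forces $W_n:=\bigcup_{g}g^{-1}F_ng\subseteq V_n$. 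By monotonicity of $\sym{\cdot}$ and Lemma \ref{l:bk}, $\sym{W_n}_{n=2}^{\infty}\subseteq\sym{V_n}_{n=2}^{\infty}\subseteq V_0=V$; and, after re-indexing, $\sym{W_n}_{n=2}^{\infty}$ is one of our basic sets. Hence $V$ is a $\tau$-neighbourhood of $e$, so $\tau$ refines $\tau'$, which is what was claimed.

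The main obstacle is precisely the last paragraph: producing the symmetric conjugation-invariant chain $(V_n)$ inside an arbitrary SIN topology, which is exactly where the small-invariant-neighbourhood hypothesis licenses replacing the $g$-dependent maps of the general theorem by constant ones, and where Lemma \ref{l:bk} is applied. Everything else is a routine transcription of the symmetric-product lemmas already set down above.
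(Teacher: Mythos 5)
Your proof is correct and takes essentially the same route as the paper, which disposes of this lemma in one remark: restrict the Roelcke--Dierolf description to constant maps $\Phi_n$ and use the conjugation-invariance of basic neighbourhoods (the SIN property) to see that nothing is lost. Your write-up simply fills in the details the paper leaves implicit -- the verification that the constant-map sets form a SIN group topology with ${\mathcal F}\to e$, and the Birkhoff--Kakutani chain of invariant neighbourhoods plus Lemma \ref{l:bk} for finest-ness -- all of which match the paper's intended argument.
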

  
  In other words, we only take those $\Phi_n$ which take constant values on all of $G$. This follows directly from the Roelcke--Dierolf description and the conjugation-invariance of the basic neighbourhoods of the group (which is a reformulation of the SIN property, hence the name: small invariant neighbourhoods).
  
  In this way, there is a base at identity which is indexed with all countable sequences of elements of a base of the filter. This leads immediately to:

\begin{theorem}
        Let $G$ be a SIN group, and let ${\mathcal F}_n$ be a countable family of filters on $G$ each one of which admits an $\w^\w$-base. Suppose  the topology on $G$ is the finest SIN topology with regard to which ${\mathcal F}_n\to e$ for every $n$.  
        Then $G$ has a local $\w^\w$-base at identity. \qed
\end{theorem}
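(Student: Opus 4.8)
The plan is to follow the proof of Theorem \ref{th:gn} essentially verbatim, replacing the full Roelcke--Dierolf description by its SIN analogue stated in the lemma immediately preceding the theorem. The whole point is that the SIN description uses only those $\Phi_n$ which are constant in $g$, so the basic neighbourhoods no longer carry a factor indexed by the group; this is precisely the feature that lets one drop the separability (indeed countability) hypothesis.

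First I would reduce to a single filter. Exactly as in the reduction preceding the intersection lemma in the proof of Theorem \ref{th:gn}, each ${\mathcal F}_n$ converges to $e$ (for a given topology) if and only if the intersection filter ${\mathcal F}=\bigcap_{n\in\omega}{\mathcal F}_n$ converges to $e$, since ${\mathcal F}\subseteq{\mathcal F}_n$ for every $n$. Hence the class of SIN topologies making every ${\mathcal F}_n$ converge coincides with the class making ${\mathcal F}$ converge, so the finest SIN topology making all ${\mathcal F}_n\to e$ is the finest SIN topology in which ${\mathcal F}\to e$. By the intersection lemma above, ${\mathcal F}$ admits an $\w^\w$-base, so I may fix a monotone cofinal map $W\colon\w^\w\to{\mathcal F}$ onto a base ${\mathcal B}$ of ${\mathcal F}$.

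Next I would invoke the preceding SIN lemma: a base at the identity for the finest SIN group topology in which ${\mathcal F}\to e$ is given by the sets
\[\sym{\left(\bigcup_{g\in G} g^{-1}V_n g\right)_{n}},\]
where $(V_n)_{n}$ runs over all sequences of elements of ${\mathcal F}$. Since every element of ${\mathcal F}$ contains one of ${\mathcal B}$, and the construction is monotone in each $V_n$ (a smaller $V_n$ gives a smaller conjugation-union, hence a smaller symmetric product), restricting to $V_n\in{\mathcal B}$ still yields a base. Crucially, such a sequence is now merely an element of ${\mathcal B}^{\omega}\cong(\w^\w)^{\omega}\cong\w^{\w\times\w}\cong\w^\w$, with no factor indexed by $G$, and this identification is valid irrespective of the cardinality of $G$. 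I would therefore define the map
\[\w^\w\cong(\w^\w)^{\omega}\ni (f_n)_{n}\ \longmapsto\ \sym{\left(\bigcup_{g\in G} g^{-1}W(f_n) g\right)_{n}}.\]

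The remaining verifications are routine rather than obstructive, which is why the statement carries a \qed. Monotonicity is immediate: if $(f_n)\leq(f'_n)$ pointwise, then $W(f_n)\subseteq W(f'_n)$ by monotonicity of $W$, and tracing this through the conjugation-union and the symmetric product preserves the inclusion. Cofinality is exactly the content of the preceding SIN lemma together with the cofinality of ${\mathcal B}$ in ${\mathcal F}$. Hence the displayed map is a monotone cofinal map from $\w^\w$ onto a neighbourhood base at the identity, i.e.\ a local $\w^\w$-base. The only point that deserves care is the bookkeeping in identifying $(\w^\w)^{\omega}$ with $\w^\w$ and confirming that this order isomorphism is compatible with the pointwise order used throughout; but this is the same identification already employed in the proof of Theorem \ref{th:gn}, so no genuine obstacle arises.
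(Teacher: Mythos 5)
Your proposal is correct and follows essentially the same route as the paper: the paper also reduces to the single intersection filter ${\mathcal F}=\bigcap_n{\mathcal F}_n$ (via the same intersection lemma from the proof of Theorem \ref{th:gn}), invokes the SIN lemma giving basic neighbourhoods built from constant maps $\Phi_n$, and indexes the resulting base monotonically by ${\mathcal B}^{\omega}\cong(\w^\w)^{\omega}\cong\w^\w$, which is precisely why the statement carries a \qed there. No gaps; your extra remarks on monotonicity, cofinality, and the order isomorphism are the same routine verifications the paper leaves implicit.
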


Recall that a uniform space $X$ has an $\w^\w$-base if the filter of entourages of the diagonal $\Delta_X$ in the square $X \times X$ 
has an $\w^\w$-base.

\begin{corollary}
        The free SIN group on a uniform space $X$ has a local $\w^\w$-base if and only if the diagonal $\Delta_X$ in the square $X \times X$ 
admits an $\w^\w$-base of entourages.
\end{corollary}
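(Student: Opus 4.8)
The plan is to derive both implications from two earlier facts: that $X$ embeds into the free SIN group $G$ as a \emph{uniform subspace}, and that the uniformity $\mathcal V$ of $X$ has an $\w^\w$-base if and only if the associated filter $\mathcal F=\{i(V):V\in\mathcal V\}$ does, where $i(V)=\{x^{-1}y:(x,y)\in V\}\cup\{xy^{-1}:(x,y)\in V\}$. By definition the topology of $G$ is the finest SIN group topology in which $\mathcal F\to e$, so these two facts are exactly what links the group side to the uniform side.

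For the ``if'' direction I would assume that $\mathcal V$ has an $\w^\w$-base. Since $V\mapsto i(V)$ is monotone and carries a base of $\mathcal V$ onto a base of $\mathcal F$, the filter $\mathcal F$ then admits an $\w^\w$-base as well. Applying the preceding SIN theorem to the single filter $\mathcal F$ (equivalently, to the constant family ${\mathcal F}_n=\mathcal F$) immediately produces a local $\w^\w$-base at the identity of $G$.

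For the ``only if'' direction I would start from a monotone cofinal map $\alpha\mapsto W_\alpha$ from $\w^\w$ to the neighbourhoods of $e$ in $G$. As $G$ is SIN, its left, right and two-sided uniformities coincide, and the entourages $\{(x,y):x^{-1}y\in W_\alpha\}$ form a monotone cofinal $\w^\w$-indexed base for that single uniformity. Intersecting each of these entourages with $X\times X$ gives a monotone cofinal $\w^\w$-indexed family in the subspace uniformity; since $X$ is a uniform subspace of $G$, this subspace uniformity is precisely $\mathcal V$, and so $\mathcal V$ inherits an $\w^\w$-base.

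The step I expect to carry the real weight is the input that $X$ is a uniform subspace of $G$, that is, that the finest SIN topology making $\mathcal F\to e$ induces back on $X$ exactly $\mathcal V$ rather than a strictly coarser uniformity. Convergence $\mathcal F\to e$ only yields uniform continuity of the canonical map $X\to G$, which by itself makes the induced uniformity merely coarser than $\mathcal V$; upgrading this to a genuine uniform embedding is the SIN analogue of Nummela's theorem \cite{nummela}, and it is precisely what the forward implication needs. Once that is in hand, the remaining ingredients — monotonicity of $i$, the passage from neighbourhoods of $e$ to entourages in a SIN group, and the fact that restricting an $\w^\w$-base of entourages to a subspace is again an $\w^\w$-base — are all routine.
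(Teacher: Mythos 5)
Your proof is correct and follows the route the paper intends: the ``if'' direction is exactly the preceding SIN theorem applied to the single filter $\mathcal F=\{i(V):V\in\mathcal V\}$, and the ``only if'' direction uses that $X$ sits inside the free SIN group as a uniform subspace, so that restricting an $\w^\w$-indexed base of bi-invariant entourages to $X\times X$ recovers an $\w^\w$-base of $\mathcal V$. The embedding fact you correctly flag as the load-bearing input is the standard SIN analogue of Nummela's theorem (it follows, e.g., from Graev's extension of pseudometrics on $X$ to bi-invariant pseudometrics on the free group), so your argument is complete modulo that classical result, which is precisely how the paper's own (unwritten) justification of the corollary proceeds.
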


In particular, this holds if $G$ is Abelian. 

\begin{corollary}\label{cor_abelian}
        The free Abelian topological group $A(X)$ on a uniform space $X$ admits a local $\w^\w$-base if and only if the diagonal $\Delta_X$ in the square $X \times X$ 
has an $\w^\w$-base of entourages.
\end{corollary}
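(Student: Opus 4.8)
The plan is to obtain both implications from the material already assembled for SIN groups, the point being that an Abelian group is automatically SIN. Indeed, on the underlying (algebraically) free Abelian group every group topology is conjugation-invariant, so the left, right and two-sided uniformities all coincide. Hence the free Abelian group topology on $A(X)$ --- the finest group topology rendering the canonical map $X\to A(X)$ uniformly continuous --- is simultaneously the finest SIN group topology with that property, and the SIN analogue of Theorem~\ref{th:gn} stated above becomes applicable.

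For the substantial implication ($\Leftarrow$), assume the uniformity $\mathcal V$ of $X$ has an $\w^\w$-base. Writing $A(X)$ additively, I would attach to each $V\in\mathcal V$ the set $i(V)=\{\pm(x-y):(x,y)\in V\}$, exactly as in the treatment of $F(X)$ above, and let $\mathcal F$ be the filter they generate. As observed there, the inclusion $X\hookrightarrow A(X)$ is uniformly continuous in the two-sided uniformity precisely when $\mathcal F\to 0$; consequently the free Abelian topology is the finest SIN group topology in which $\mathcal F\to 0$. Since $V\mapsto i(V)$ is monotone and cofinal, $\mathcal F$ inherits an $\w^\w$-base from $\mathcal V$. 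Applying the SIN form of Theorem~\ref{th:gn} to the SIN group $A(X)$ and the single filter $\mathcal F$ then delivers a local $\w^\w$-base at $0$.

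For the converse ($\Rightarrow$), I would use that, just as for $F(X)$ above, $X$ sits inside $A(X)$ as a uniform subspace in the two-sided uniformity. If $\{W_\alpha\}_{\alpha\in\w^\w}$ is a local $\w^\w$-base at $0$, then the entourages $\widehat W_\alpha=\{(g,h):h-g\in W_\alpha\}$ constitute a monotone cofinal family, i.e.\ an $\w^\w$-base, for the two-sided uniformity of $A(X)$ --- here commutativity is what makes the left and right translates coincide. Restricting each $\widehat W_\alpha$ to $X\times X$ yields an $\w^\w$-base for the induced, hence the original, uniformity on $X$, so $\Delta_X$ has an $\w^\w$-base of entourages.

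The main obstacle is the identification in the first paragraph: one must verify that the free Abelian group topology coincides with the \emph{finest} SIN topology in which $\mathcal F$ converges, so that the Roelcke--Dierolf description underpinning the SIN theorem is the correct one. This rests on the universal property of $A(X)$ for uniformly continuous maps into Abelian (equivalently SIN) groups together with the equivalence ``$X\hookrightarrow A(X)$ uniformly continuous $\iff$ $\mathcal F\to 0$'' already established for $F(X)$. Once this is granted, the $(\Leftarrow)$ direction is a direct citation of the SIN theorem and the $(\Rightarrow)$ direction a routine restriction argument.
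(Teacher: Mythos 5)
Your proposal is correct and follows essentially the same route as the paper: the paper derives this corollary from its free-SIN-group statement (itself a consequence of the SIN version of Theorem~\ref{th:gn}), noting that an Abelian group is automatically SIN, so the finest group topology making $\mathcal F\to 0$ is the finest SIN topology doing so. Your write-up merely makes explicit the details the paper leaves implicit, namely the filter $i(V)$ reduction for ($\Leftarrow$) and the restriction of the group uniformity to the uniform subspace $X$ for ($\Rightarrow$).
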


\begin{corollary}\label{cor_abelian_tyh}
        The free Abelian topological group $A(X)$ on a Tychonoff space $X$ admits a local $\w^\w$-base if and only if the finest compatible uniformity on $X$ 
				has an $\w^\w$-base.
\end{corollary}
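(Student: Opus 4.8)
The plan is to deduce this Tychonoff statement directly from the uniform version already established in Corollary~\ref{cor_abelian}, the only extra ingredient being the identification of the free Abelian topological group in the topological sense with the free Abelian topological group on a suitable uniform space.

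The first step is to record the Abelian counterpart of the identification quoted earlier for $F(X)$ (after Nummela): for a Tychonoff space $X$, the free Abelian topological group $A(X)$ coincides, as a topological group, with the free Abelian topological group $A(X,{\mathcal U}_X)$ on the uniform space $(X,{\mathcal U}_X)$, where ${\mathcal U}_X$ is the finest compatible (universal) uniformity of $X$. I would obtain this by comparing the two universal properties. Both free groups are Abelian, hence SIN, so on any target Abelian topological group $G$ the left, right and two-sided uniformities coincide and there is no ambiguity in the uniform universal property. Moreover, ${\mathcal U}_X$ is exactly the uniformity for which every continuous map from $X$ into any uniform space is automatically uniformly continuous; consequently a map $X\to G$ is continuous (the lifting datum for the Tychonoff $A(X)$) if and only if it is uniformly continuous with respect to ${\mathcal U}_X$ and the unique uniformity of $G$ (the lifting datum for the uniform $A(X,{\mathcal U}_X)$). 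Since the two constructions solve the same universal problem, they yield canonically isomorphic topological groups.

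With this identification in hand, I would simply invoke Corollary~\ref{cor_abelian} applied to the uniform space $(X,{\mathcal U}_X)$: the group $A(X,{\mathcal U}_X)$ admits a local $\w^\w$-base if and only if the diagonal $\Delta_X$ in $X\times X$ has an $\w^\w$-base of entourages in ${\mathcal U}_X$, which by the terminology recalled just before Corollary~\ref{cor_abelian} means precisely that ${\mathcal U}_X$ itself has an $\w^\w$-base. Combining this equivalence with the isomorphism $A(X)\cong A(X,{\mathcal U}_X)$ produces the stated equivalence verbatim. The genuinely substantive point, and the one I expect to be the main obstacle, is the identification of the first step rather than any new estimate: one must verify that passing from the topological free group to the uniform free group introduces no discrepancy in topology. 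This is the Abelian analogue of Nummela's theorem and hinges on the characterisation of the universal uniformity through the coincidence of continuity and uniform continuity; once it is in place the remaining implications are purely formal, and no fresh $\w^\w$-indexing argument is needed beyond what Corollary~\ref{cor_abelian} already supplies.
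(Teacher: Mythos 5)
Your proposal is correct and matches the paper's own (largely implicit) argument: the paper likewise obtains Corollary \ref{cor_abelian_tyh} by combining Corollary \ref{cor_abelian} with the identification of $A(X)$ with the free Abelian topological group on the uniform space $(X,{\mathcal U}_X)$, an identification it records earlier as ``easy to see'' with a citation to Nummela rather than proving it. Your universal-property verification of that identification --- that maps from $X$ into a topological Abelian group are continuous if and only if they are uniformly continuous from the finest compatible uniformity, so the two free objects solve the same universal problem --- is precisely the standard argument the paper is relying on, so no gap remains.
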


As an immediate consequence of the last Corollary \ref{cor_abelian_tyh} and Proposition  \ref{sigma-compact}  we have the following

\begin{corollary}\label{cor_sigma-compact}
        Let X be a metrizable space such that the set $X^{\prime}$ of all non-isolated points in $X$ is $\sigma$-compact.
        Then the free Abelian group $A(X)$ has a local $\w^\w$-base. \qed
\end{corollary}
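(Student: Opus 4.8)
The plan is to obtain the statement as a direct combination of two results already established in this section, so that no genuinely new argument is required.

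First, I would apply Proposition \ref{sigma-compact} to the space $X$. Its hypotheses are met verbatim: $X$ is metrizable and its set $X'$ of non-isolated points is $\sigma$-compact. The proposition then yields that the finest (universal) compatible uniformity on $X$ possesses an $\w^\w$-base of entourages of the diagonal. Concretely, this is the base $\{V_\alpha\}_{\alpha\in\w^\w}$ constructed there from a decomposition $X'=\bigcup_n K_n$ into compacta, where $V_\alpha$ is the union of the shrinking $2^{-\alpha(n)}$-neighbourhoods of the diagonal images $\tilde K_n$ together with $\Delta_X$.

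Second, I would invoke Corollary \ref{cor_abelian_tyh}, which characterizes exactly when $A(X)$ admits a local $\w^\w$-base for a Tychonoff $X$: this happens if and only if the finest compatible uniformity on $X$ has an $\w^\w$-base. The first step supplies precisely this condition, so the equivalence immediately gives that $A(X)$ has a local $\w^\w$-base, completing the proof.

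Since the corollary is of the ``immediate consequence'' type, there is no real obstacle at this level; the only nontrivial work has already been carried out inside Proposition \ref{sigma-compact} (building the $\w^\w$-base for the diagonal from the $\sigma$-compact skeleton, using that the $\e$-neighbourhoods of a compact subset of a metric space form a local base of that compact) and inside Corollary \ref{cor_abelian_tyh} (the reduction through the SIN free-group machinery). Here one merely verifies that the hypotheses of Proposition \ref{sigma-compact} coincide with the input of the present statement and that its conclusion matches the hypothesis of Corollary \ref{cor_abelian_tyh}, which they do.
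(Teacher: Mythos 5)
Your proof is correct and is exactly the paper's own argument: the authors derive this corollary as an immediate consequence of Proposition \ref{sigma-compact} combined with Corollary \ref{cor_abelian_tyh}, precisely as you do. Nothing further is needed.
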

 
Note that in view of Example \ref{ss:ex} the metrizability assumption on $X$ can not be dropped.
\begin{remark}
{\em
Corollary \ref{th:chain} from the previous section and Corollary \ref{cor_sigma-compact} generalize substantially several results from \cite{GabKakLei_2} and \cite{GabKak_2}.
} 
\end{remark}

Finally, we obtain a characterization of countable spaces whose free topological group admits a local $\w^\w$-base.

\begin{proposition} 
        \label{p:countable}
        A countable space $X$ has an $\w^\w$-base of the diagonal if and only if the filter of neighbourhoods of each point of $X$ admits an $\w^\w$-base. 
\end{proposition}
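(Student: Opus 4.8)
The plan is to prove the two implications separately, relying throughout on the fact (already used in the proof of Corollary \ref{c:nbhd}) that on a countable space the finest compatible uniformity coincides with the filter $\mathcal N$ of all neighbourhoods of the diagonal $\Delta_X$ in $X\times X$; thus ``$X$ has an $\w^\w$-base of the diagonal'' means precisely that $\mathcal N$ admits an $\w^\w$-base. The geometric input common to both directions is the elementary observation that $\mathcal N$ has a base consisting of the \emph{diagonal unions of squares}
\[ D\bigl((O_x)_{x\in X}\bigr)=\bigcup_{x\in X}O_x\times O_x, \]
where each $O_x$ is an open neighbourhood of $x$: indeed any neighbourhood $W$ of $\Delta_X$ contains, around each $(x,x)$, a basic open rectangle and hence a square $O_x\times O_x\subseteq W$, so $D((O_x))\subseteq W$, while conversely every such $D((O_x))$ is open and contains $\Delta_X$.

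For the implication that a pointwise $\w^\w$-base yields an $\w^\w$-base of the diagonal, I would enumerate $X=\{x_k:k\in\omega\}$ and fix for each $k$ a monotone cofinal map $U^k\colon\w^\w\to\mathcal N(x_k)$, where $\mathcal N(x_k)$ is the neighbourhood filter of $x_k$. I then define
\[ \Phi\colon (\w^\w)^\omega\to\mathcal N,\qquad \Phi\bigl((\alpha_k)_k\bigr)=\bigcup_{k\in\omega}U^k(\alpha_k)\times U^k(\alpha_k). \]
Since each $U^k(\alpha_k)$ is a neighbourhood of $x_k$, the set $\Phi((\alpha_k))$ is a neighbourhood of every diagonal point, hence of $\Delta_X$, so $\Phi$ is well defined. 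Monotonicity is immediate, because each $U^k$ is monotone and union is inclusion-preserving in the set-values $U^k(\alpha_k)$. Cofinality follows from the square base above: given $W\in\mathcal N$, choose for each $k$ an open $O_k\ni x_k$ with $O_k\times O_k\subseteq W$, then $\alpha_k$ with $U^k(\alpha_k)\subseteq O_k$; the union of these squares is contained in $W$. Finally $(\w^\w)^\omega\cong\w^{\omega\times\omega}\cong\w^\w$ as directed sets, so $\Phi$ is the desired $\w^\w$-base --- this reindexing is exactly where countability of $X$ is used.

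For the converse I would pass to vertical sections. Given a monotone cofinal map $W\colon\w^\w\to\mathcal N$, fix a point $x$ and set $V^x_\alpha=W_\alpha[x]=\{y:(x,y)\in W_\alpha\}$. Each $V^x_\alpha$ is a neighbourhood of $x$, and $\alpha\mapsto V^x_\alpha$ is monotone because $W\mapsto W[x]$ preserves inclusion. The point to verify is cofinality: for an open neighbourhood $U$ of $x$ I exhibit a neighbourhood of the diagonal whose section at $x$ is contained in $U$, namely $W_U=(X\times X)\setminus\bigl(\{x\}\times(X\setminus U)\bigr)$. As points of $X$ are closed, $\{x\}\times(X\setminus U)$ is closed, so $W_U$ is open, contains $\Delta_X$, and satisfies $W_U[x]=U$. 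Cofinality of $W$ then produces $\alpha$ with $W_\alpha\subseteq W_U$, whence $V^x_\alpha\subseteq U$, so $\{V^x_\alpha\}$ is an $\w^\w$-base of $\mathcal N(x)$.

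I expect the only genuinely delicate point to be this cofinality step of the converse, where an arbitrary small neighbourhood of a point must be realised as a section of a neighbourhood of the diagonal; this is precisely where the separation hypothesis (points closed, automatic for the Tychonoff spaces considered here) enters, for without it a point inseparable from $x$ could force every admissible section to be large. The remaining ingredients --- the square base for $\mathcal N$ and the reindexing $(\w^\w)^\omega\cong\w^\w$ --- are routine.
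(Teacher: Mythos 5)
Your proposal is correct and takes essentially the same route as the paper: the nontrivial direction (pointwise bases imply a diagonal base) is exactly the paper's construction, coalescing the pointwise monotone cofinal maps into the union-of-squares map $f\mapsto\bigcup_{x\in X} i_x(f(x))\times i_x(f(x))$ indexed by $\left(\omega^\omega\right)^X\cong\omega^\omega$ via countability, while the other direction, which the paper dismisses as trivially true, is your section argument $W\mapsto W[x]$. The extra details you supply (the square base for $\mathcal N$, the cofinality check, and the T1 point needed for the sections step) are accurate and harmless in the paper's Tychonoff setting.
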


\begin{proof} $\Rightarrow$ is trivially true, and to prove $\Leftarrow$, given a monotone cofinal map $i_x$ for the neighbourhood filter of every point, coalesce them all together into a monotone map $i$ from $\left(\omega^\omega\right)^X$: for $f\colon X\to \omega^\omega$, set
        \[i(f)=\bigcup_{x\in X} i_x(f(x))\times i_x(f(x)).\]
\end{proof}

\begin{example}\label{ss:ex}
{\em
 We show that even for a countable space $X$ with one non-isolated point, the topological groups $A(X)$ and $F(X)$ need not have a local $\w^\w$-base.
 
We view ultrafilters as partially ordered sets in a usual way, that is, for $A,B\in\xi$ we take $A\leq B\iff A\supseteq B$. 
Let us mention first that there are ultrafilters which are Tukey strictly above $\omega^\omega$.
 Indeed, according to Isbell \cite{I}, in ZFC there exist ultrafilters $\xi$ of the top Tukey type, that is, such that $\xi\equiv_T [{\mathfrak c}]^{<\omega}$; as noted in \cite{DT}, in a comment after Isbell's theorem 17, such ultrafilters are very numerous, as in fact there are $2^{\mathfrak c}$ of them. (It is unknown whether ultrafilters strictly below the Tukey top exist in ZFC.)
 
The authors are grateful to Stevo Todorcevic for pointing out to us
 that in fact, as follows from results of Solecki and Todorcevic \cite{SolTod}, no non-principal ultrafilter is Tukey-reducible to $\omega^\omega$.
As we were unable to find this deduction in an explicit form anywhere in the literature, we include it for reader's convenience.
 
 Let $D$ be a filter of subsets of $\omega$ and assume that $D \leq_{T} \omega^\omega$.
 Additionally, we view $D$ as a subset of the Cantor set $\{0,1\}^{\omega}$.
 Thus $D$ is a metric separable space with a partial order in which the
set of predecessors of each element is compact. 
As a topological space the set $\omega^\omega$ is of course analytic, and we deduce, by \cite{SolTod}, Corollary 5.4, 
that $D$ is also analytic.
However, it is well known that no non-principal ultrafilter is analytic.
 
 Now let  $\xi$ be any non-principal ultrafilter. We form the countable topological space $X=\omega\cup\{\xi\}$ considered as a subspace of $\beta\omega$ with the induced topology. The filter of neighbourhoods of $\xi$ in $X$ being Tukey equivalent to $\xi$, 
is not Tukey-reducible to $\omega^\omega$, therefore the topological groups $A(X)$ and $F(X)$ do not have a local $\w^\w$-base.
}
\end{example}

\subsection{Free locally convex space}
We conclude the paper by discussing the free locally convex space $L(X)$ on a topological or uniform space $X$ \cite{Flo2,Rai,Usp} and answering another question from \cite{GabKakLei_2}. This space has $X$ as a Hamel basis, and enjoys the same universal property as the free topological group with regard to all (uniformly) continuous maps from $X$ to locally convex spaces: every such map extends to a continuous linear map from $L(X)$. 

As a consequence of a standard result in the theory of mass transportation, the free Abelian topological group $A(X)$ canonically embeds into $L(X)$ as an additive topological subgroup generated by $X$ (as shown in \cite{Tkach},\cite{Usp2}).

It was asked in \cite{GabKakLei_2} (Question 4.19) whether the space $L(X)$ admits a local $\w^\w$-base if and only if the group $A(X)$ does. Necessity follows from our remark above. We will show that the sufficiency does not hold.

For $X$ discrete, the free locally convex space $L(X)$ is just the direct sum of $\tau=\abs X$ many copies of $\R$ equipped with the finest locally convex topology. We will denote such a sum $\R^{\oplus\tau}$. In the countable case, it is well known and easily verified that the finest locally convex topology on $\R^{\oplus\omega}$ equals each of the following two topologies: the topology of a chain with regard to a cover by finite-dimensional subspaces $\R^n$, $n\in\omega$, and also the box topology induced from $\R^{\square \omega}$. For uncountable $\tau$, neither of the two assertions is true. However, the box product topology, being locally convex, is contained in the finest locally convex topology. This leads to:

\begin{lemma}
        The neighbourhood filter, $\mathcal N$, of the space $\R^{\oplus\tau}$ Tukey dominates $\omega^{\tau}$ with the pointwise order.
\end{lemma}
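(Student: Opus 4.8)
The plan is to produce a \emph{monotone cofinal} map $\Phi\colon\mathcal N\to\omega^\tau$, which by definition is exactly what ``$\mathcal N$ Tukey dominates $\omega^\tau$'' asserts (recall that $\mathcal N$ is directed by reverse inclusion, so that the \emph{smaller} neighbourhood of the origin is the \emph{larger} element of the directed set). The map will simply record, for each neighbourhood $U$ of $0$, how narrow $U$ is along each coordinate axis. Concretely, for $U\in\mathcal N$ and $i\in\tau$ I would set
\[ t_i(U)=\sup\{t>0\colon [-t,t]\,e_i\subseteq U\}\in(0,+\infty], \]
which is strictly positive because $U$, being a neighbourhood of $0$, absorbs the line $\R e_i$. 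Then define $\Phi(U)\in\omega^\tau$ by letting $\Phi(U)(i)$ be $\max\bigl(0,\lfloor-\log_2 t_i(U)\rfloor\bigr)$, so that a narrower axis-section forces a larger value.

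Monotonicity is immediate: if $U\supseteq U'$ then $t_i(U)\geq t_i(U')$ for every $i$, whence $\Phi(U)\leq\Phi(U')$ pointwise; since the order on $\mathcal N$ is reverse inclusion, $\Phi$ is order preserving. The crux is cofinality of the image, and here I would invoke the observation recorded just before the lemma, namely that the box product topology, being locally convex, is contained in the finest locally convex topology. Given an arbitrary $\beta\in\omega^\tau$, form the box neighbourhood
\[ B_\beta=\{x\in\R^{\oplus\tau}\colon |x_i|<2^{-\beta(i)}\text{ for all }i\}. \]
This set is absolutely convex and absorbing (absorbing because each $x\in\R^{\oplus\tau}$ has finite support, so only finitely many coordinate constraints are active), hence it is a neighbourhood of $0$ in the finest locally convex topology, i.e.\ $B_\beta\in\mathcal N$. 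Its axis-widths are $t_i(B_\beta)=2^{-\beta(i)}$, so that $\Phi(B_\beta)=\beta$. Thus the whole image of $\Phi$ is cofinal in $\omega^\tau$, and for a monotone map this is equivalent to $\Phi$ being a cofinal map (Fact 5 of \cite{DT}, as recalled above). This yields $\omega^\tau\leq_T\mathcal N$, which is the assertion.

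The only genuinely delicate point is the cofinality step, and it rests entirely on each box neighbourhood $B_\beta$ actually belonging to $\mathcal N$; everything else is bookkeeping with the order conventions and the logarithmic encoding. I expect no obstacle beyond confirming the standard description of the finest locally convex topology (every absolutely convex absorbing set is a neighbourhood of the origin), together with the elementary remark that finite support makes $B_\beta$ absorbing even when $\tau$ is uncountable. It is worth noting that the argument never uses metrizability or countability of $\tau$, which is precisely why the lemma will be usable for $\tau=\mathfrak c$ to separate $L(X)$ from $A(X)$.
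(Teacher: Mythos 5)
Your proof is correct, but it verifies the Tukey domination through the dual characterization rather than the one the paper uses. You build a monotone map $\Phi\colon\mathcal N\to\omega^\tau$ (recording, for each neighbourhood of $0$, how narrow it is along each coordinate axis) and check that its image is cofinal --- indeed all of $\omega^\tau$, since $\Phi(B_\beta)=\beta$ --- which by Fact 5 of \cite{DT} makes the monotone $\Phi$ a cofinal map and gives $\omega^\tau\leq_T\mathcal N$. The paper instead works with Schmidt's characterization of the Tukey order (recalled in its Section 2): it maps each $f\in\omega^\tau$ to the box set $\square_f=\prod_{\beta<\tau}B_{1/f(\beta)}(0)\cap\R^{\oplus\tau}$ and checks that this map is unbounded: if $A\subseteq\omega^\tau$ is unbounded, then $\sup_{f\in A}f(\beta)=\infty$ for some $\beta<\tau$, so every $x\in\bigcap_{f\in A}\square_f$ has $x_\beta=0$, and such an intersection cannot contain a neighbourhood of $0$ (neighbourhoods are absorbing); hence $\{\square_f\colon f\in A\}$ is unbounded in $\mathcal N$. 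Both arguments rest on exactly the same key fact --- the box sets, being absolutely convex and absorbing (thanks to finite supports), are neighbourhoods of $0$ in the finest locally convex topology --- so the essential content is shared. Your version trades the paper's short unboundedness computation for the bookkeeping of the width function $t_i$ and its logarithmic encoding, and in exchange yields the formally stronger conclusion that there is a monotone map from $\mathcal N$ \emph{onto} $\omega^\tau$; the paper's version is more economical and slots directly into the Tukey-map machinery it had already set up.
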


\begin{proof}
        For each $f\in \omega^{\tau}$ define a box product neighbourhood:
        \[\square_f=\prod_{\beta<\tau} B_{1/f(\beta)}(0)\cap \R^{\oplus\tau}.\]
        We claim that the map above, viewed as a monotone map to the neighbourhood system $\mathcal N$, is unbounded. Indeed, unboundedness of a set of functions $A\subseteq \omega^{\tau}$ means exactly that for some $\beta<\tau$,
        \[\sup_{f\in A}f(\beta)=\infty.\]
        Consequently,
        \[x\in\bigcap_{f\in A}\square_f \Rightarrow x_{\beta}=0.\]
        Thus, $\bigcap_{f\in A}\square_f$ is not a neighbourhood of identity in the finest locally convex topology either. 
\end{proof}

The following is proved by the diagonal argument.

\begin{lemma}
        The directed set $\omega^{\tau}$ has cofinality type $>\tau$ and hence is Tukey above $\w^\w$ for every $\tau\geq{\mathfrak c}$.
\end{lemma}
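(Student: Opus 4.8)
The plan is to prove two separate assertions: first that the directed set $\omega^\tau$ (with pointwise order) has cofinality strictly greater than $\tau$, and second that this cofinality bound forces $\omega^\tau \geq_T \omega^\omega$ whenever $\tau \geq \mathfrak{c}$.

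For the cofinality claim, I would run the classical diagonal argument. Suppose for contradiction that $\{g_\beta : \beta < \tau\}$ is a cofinal family in $\omega^\tau$ of size $\tau$; I want to produce an $h \in \omega^\tau$ that no $g_\beta$ dominates. The natural move is to index the would-be cofinal family by $\tau$ itself and diagonalize coordinate-by-coordinate: define $h$ by setting $h(\beta) = g_\beta(\beta) + 1$. Then for every $\beta$, one has $h(\beta) > g_\beta(\beta)$, so $h \not\leq g_\beta$ pointwise, contradicting cofinality. This shows $\mathrm{cf}(\omega^\tau) > \tau$, which is exactly the first assertion.

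For the Tukey comparison, I would invoke the characterization of Tukey order via cofinality together with the standard fact that if $D \leq_T E$ then $\mathrm{cf}(D) \leq \mathrm{cf}(E)$ — equivalently, Tukey-smaller sets cannot have larger cofinality. Since $\omega^\omega$ has cofinality $\mathfrak{d} \leq \mathfrak{c} \leq \tau$, whereas $\omega^\tau$ has cofinality $> \tau \geq \mathfrak{c} \geq \mathfrak{d}$, the set $\omega^\tau$ has strictly larger cofinality than $\omega^\omega$. Therefore $\omega^\tau$ cannot be Tukey-reducible to $\omega^\omega$, i.e. $\omega^\tau \not\leq_T \omega^\omega$. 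To upgrade this to $\omega^\tau \geq_T \omega^\omega$ (Tukey \emph{above}, as stated), I would exhibit a Tukey (unbounded) map $\omega^\omega \to \omega^\tau$: the coordinate inclusion sending $f \in \omega^\omega$ to the function on $\tau$ that agrees with $f$ on the first $\omega$ coordinates and is, say, identically $0$ elsewhere, is easily checked to send unbounded subsets of $\omega^\omega$ to unbounded subsets of $\omega^\tau$, which gives $\omega^\omega \leq_T \omega^\tau$ by the Schmidt characterization recalled earlier in the paper.

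The genuinely essential step is the first one: the diagonal argument establishing $\mathrm{cf}(\omega^\tau) > \tau$. Everything else is bookkeeping with the Tukey formalism already set up in the excerpt. The one subtlety to handle carefully is that the diagonalization requires enumerating the candidate cofinal family by ordinals below $\tau$, which is legitimate precisely because we are testing against families of cardinality $\tau$; a family of larger cardinality is not excluded by this argument, but we only need to rule out cofinal families of size $\leq \tau$ to conclude $\mathrm{cf}(\omega^\tau) > \tau$. I expect no real obstacle beyond making sure the direction of the Tukey inequality matches the statement, since the lemma asserts $\omega^\tau$ is Tukey \emph{above} $\omega^\omega$, and the cofinality obstruction by itself only rules out the reverse reduction.
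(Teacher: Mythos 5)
Your proof is correct and takes essentially the same approach as the paper: the paper's entire proof is your diagonal argument (against an arbitrary size-$\tau$ family $\{a_\beta\}$, set $z(\beta)=a_\beta(\beta)+1$), with the ``hence Tukey above $\omega^\omega$'' step left implicit. Your extra bookkeeping --- that Tukey reducibility cannot increase cofinality (ruling out $\omega^\tau\leq_T\omega^\omega$ since $\mathrm{cf}(\omega^\omega)=\mathfrak d\leq\mathfrak c$), plus the coordinate-inclusion unbounded map giving $\omega^\omega\leq_T\omega^\tau$ --- correctly supplies exactly what the paper omits.
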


\begin{proof}
        Let $A$ be a subset of $\omega^{\tau}$ of cardinality $\tau$. Enumerate $A=\{a_{\beta}\colon\beta<\tau\}$. Define $z \in \omega^{\tau}$:
        \[z(x) = a_x(x) +1.\]
Then for any $a \in A$ is not true that $z < a$.
\end{proof}

\begin{example}
{\em
Let $X$ be a discrete topological space of cardinality $\geq\mathfrak c$. Then the group $A(X)$ is clearly discrete, and thus has a local $\w^\w$-base. At the same time, it follows from the above results that the smallest cardinality of a base at identity for $L(X)$ is at least ${\mathfrak c}^+$, and so $L(X)$ does not have a local $\w^\w$-base.
}
\end{example}
It would be interesting to characterize those topological (or uniform) spaces $X$ for which the free locally convex space $L(X)$ has a local $\w^\w$-base.

\subsection*{Acknowledgments}
We thank the anonymous referee who found a mistake in the proof of the original version of Theorem \ref{th:gn} and made a number of very useful remarks.

The authors are grateful to Natasha Dobrinen, Claude Laflamme, and Stevo Todorcevic for helpful remarks.
Thanks to the remarks of Saak Gabri-
yelyan, Jerzy~K{\c{a}}kol and Taras Banakh several minor inaccuracies were corrected.

This investigation started when the authors met together at the conference ``Functional Analysis and Group Actions'' in February 2014 in Florian\'opolis, Brazil, supported from the Special Visiting Researcher project of the program Science Without Borders of CAPES, Brazil (processo 085/2012). The first-mentioned author gratefully  acknowledges the  financial support he received from the University of S\~ao Paulo and the above-mentioned PVE project. The second-mentioned author (V.G.P.) was the Special Visiting Researcher of this project.

\end{document}